\newtheorem{theorem}{Theorem}[section]
\newtheorem{question}[theorem]{Question}
\newtheorem{lemma}[theorem]{Lemma}
\newtheorem{remark}[theorem]{Remark}
\newtheorem{proposition}[theorem]{Proposition}
\newtheorem{definition}[theorem]{Definition}
\newtheorem{non-theorem}{Non-Theorem}
\newcommand{\s}{\mathfrak{s}}
\newcommand{\w}{\mathbf{w}}
\newcommand{\x}{\mathbf{x}}
\newcommand{\z}{\mathbf{z}}
\newcommand{\bk}{\mathbf{k}}
\numberwithin{equation}{section}
\begin{document}
	 \title {Floer lasagna modules from link Floer homology}
	 \author{Daren Chen}
	 \maketitle
	 
	 \begin{abstract}
	 	In this paper, we introduce the notion of Floer lasagna modules, which is inspired by the construction of skein lasagna module in \cite{morrison2019invariants} by Morrison, Walker and Wedrich. Here we use link Floer homology instead of Khovanov-Rozansky homology. We give a description of the Floer lasagna module for $4$-manifolds obtained by adding $2$-handles to the $4$-ball and we compute some examples.
	 \end{abstract}
	 \section{Introduction}
	 
	 In \cite{morrison2019invariants}, Morrison, Walker and Wedrich introduced the notion of `lasagna algebra', which is a higher dimensional analogue of a planar algebra. Roughly speaking, a lasagna algebra consists of a vector space $\mathcal{L}(S,L)$ for each pair $(S,L)$ of a link $L$ in a $3$-sphere $S$, and for each surface $\Sigma \subset B^4 \backslash (\cup_iB^4_i)$ with $ \partial \Sigma = \Sigma \cap \left(\partial B^4 \cup (\cup_i\partial B^4_i) \right) = L\cup (\cup_i L_i) $, a map \[F_{\Sigma}:\bigotimes_{i}\mathcal{L}(\partial B^4_i,L_i) \to \mathcal{L}(\partial B^4,L),\]
	 which satisfies some compatibility conditions to give it an operad structure.

	 Based on this, they proposed a generalization of Khovanov-Rozansky homology to links in the boundary of arbitrary oriented $4$-manifold by taking $\mathcal{L}(S,L)$ to be the Khovanov-Rozansky homology $KhR^N(L)$. The outcome is a triply-graded object \[\mathcal{S}^N(W,L) = \bigoplus_{b\in\mathbb{Z}}\bigoplus_{i,j,\in \mathbb{Z}}S^{N}_{b,i,j}(W;L),\]
	 where $i,j$ are as the usual homological and quantum gradings of Khovanov-Rozansky homology, and $b$, called \textit{the blob grading}, is a new grading. The $b=0$ part $\mathcal{S}_{0,i,j}^N(W,L)$ consists of equivalent classes of lasagna fillings, where the equivalence relation is induced by applying some operations associated to the lasagna algebra. The resulting $\mathcal{S}_{0,i,j}^N(W,L)$ is later called \textit{skein lasagna module} in \cite{manolescu2020skein}, and it could be viewed as a higher dimensional generalization of skein modules of $3$-manifolds. The other blob degree part is obtained by applying the construction of blob complex in \cite{morrison2010blob}.

	 In the definition, what is important is the functoriality of the invariant $\mathcal{L}(S,L)$ in $S^3$, which makes sure the lasagna algebra has the structure of an operad. Khovanov-Rozansky homology is such a functor for links in $S^3$ and cobordisms in $S^3\times I$. Previously, we know the functoriality of Khovanov-Rozansky homology for links in $\mathbb{R}^3$, see for example \cite{ehrig2018functoriality}. In \cite{morrison2019invariants}, the authors proved the functoriality for links in $S^3$, which requires checking that the extra sweep-around move induces identity maps on $Kh^N$. See \cite[Theorem 1.1]{morrison2019invariants}. The construction of lasagna algebra, hence the equivalence relations on lasagna fillings, works for any such functors satisfying the functoriality condition.

	 Another natural candidate of such a functor is the link Floer homology $\widehat{\mathit{HFL}}$. In \cite{juhasz2016cobordisms}, Juh\'asz gave a definition of cobordism maps on $\widehat{HFL}$ using cobordism maps on sutured Floer homology. Later in \cite{zemke2019link}, Zemke gave a definition of cobordism maps on $\mathcal{HFL}^-(L)$, a version of the minus-favored link Floer homology. In \cite{juhasz2020contact}, it was shown that both definitions agree when we are setting all the variables $U_i$ to $0$ in $\mathcal{HFL}^-(L)$ and summing over all the $Spin^c$ structures. In this paper, we will mainly follow the treatment of the cobordism maps from \cite{zemke2019link}.

	 We denote by $\mathbb{L} = (L,\w,\z)$ a multi-based link in a $3$-manifold. The main object $\mathcal{FL}(W,\mathbb{L})$ of this paper is the counterpart of the skein lasagna module in the world of link Floer homology, i.e. if we apply the link Floer homology functor instead of the Khovanov-Rozansky functor to lasagna fillings. We call this object the \textit{Floer lasagna module}; see Definitions \ref{def:fillings}, \ref{def:equiv fillings}, \ref{def:module}. 
     It is a doubly-graded object, where the gradings are inherited from the Maslov grading and the collapsed Alexander grading of the link Floer homology, with some adjustments according to the filling surface. It also divides according to the relative homology class of the filling surface in $H_2(W,L)$.
In \cite{manolescu2020skein}, Manolescu and Neithalath gave expressions of $S^{N}_{0,i,j}(W;L))$ when $W$ is obtained by adding $2$-handles to $B^4$, in terms of the cabled Khovanov homology defined there. See \cite[Theorem 1.1]{manolescu2020skein}. The idea is to find a good representative in each equivalent class of lasagna fillings, such that the filling surface is in some `standard' position. In this paper, we give a similar characterization of Floer lasagna modules in terms of the \textit{cabled link Floer homology} $\widehat{cHFL}(\mathbb{L},K)$ which we will define later. 

\begin{theorem}
\label{prop:cabled link homology}
\label{theo:main}
Let $W$ be the $4$-manifold obtained from attaching $2$-handles to $B^4$ along a framed link $K$. Let $\mathbb{L}=(L,\w,\z)$ be a multi-based link in the $3$-manifold $\partial W$ such that $L$ is disjoint from the surgery loci. Then we have a grading-preserving isomorphism: \[\Phi: \widehat{cHFL}(\mathbb{L},K)\to \mathcal{FL}(W,\mathbb{L}).\]
\end{theorem}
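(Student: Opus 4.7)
The plan is to mirror the strategy of Manolescu--Neithalath \cite{manolescu2020skein} for skein lasagna modules, with link Floer homology in place of Khovanov--Rozansky homology and Zemke's link cobordism maps in place of the Khovanov ones. The key observation is that any lasagna filling of $(W,\mathbb{L})$ can be represented by a filling in ``standard form,'' where the underlying surface $\Sigma$ meets each $2$-handle in a collection of parallel cocore disks. The intersections with $\partial B^4$ then form a cable $C$ of the framed link $K$, and the remainder of $\Sigma$ sits as a cobordism in $B^4$ from the internal input links to $\mathbb{L}\sqcup C$.

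First I would construct $\Phi$ explicitly. A generator of $\widehat{cHFL}(\mathbb{L},K)$ is, up to the relations defining this group, a link Floer class on $\mathbb{L}\sqcup C\subset S^3$ for some cable $C$ of $K$. To such a generator I associate the lasagna filling of $(W,\mathbb{L})$ with a single input ball carrying the given link Floer class, and whose underlying surface is the union of cores of the $2$-handles capped along the components of $C$, extended by a small collar. The adjustments in the definition of $\mathcal{FL}(W,\mathbb{L})$ are designed so that this assignment is grading-preserving, and its relative homology class in $H_2(W,L)$ is the one recorded by the cable $C$.

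Next I would prove surjectivity. Given an arbitrary lasagna filling $F$, general-position arguments allow one to isotope $\Sigma$ inside $W$ so that it is transverse to the cocores of each $2$-handle and meets each handle in a union of parallel cocore disks; this isotopy is a permitted equivalence on fillings. The resulting surface decomposes as a union of these cores with a cobordism $\Sigma_0\subset B^4$ from the inputs of $F$ to $\mathbb{L}\sqcup C$. Using the equivalence relation that lets an input ball swallow a cobordism in $B^4\setminus\bigsqcup B^4_i$, the piece $\Sigma_0$ can be absorbed into a single input ball carrying the link Floer class on $\mathbb{L}\sqcup C$ obtained from the input classes of $F$ by Zemke's cobordism map. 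This exhibits $F$ in the image of $\Phi$.

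Finally, injectivity is the main obstacle. Suppose two standard-form fillings are lasagna-equivalent in $\mathcal{FL}(W,\mathbb{L})$; the equivalence is witnessed by a finite sequence of moves, each either a cobordism inside an input ball or an ambient isotopy of $\Sigma$ in $W$. I would use a Cerf-theoretic analysis to decompose any such ambient isotopy into a succession of local moves, each of which is either (i) an isotopy or cobordism supported in $B^4$ away from the handles, or (ii) a handle slide of a component of $C$ over another component of $C$ or over a component of $K$, possibly combined with a stabilization of the cable. Moves of type (i) correspond to link cobordism maps in $S^3$, which are the relations defining $\widehat{HFL}(\mathbb{L}\sqcup C)$, while moves of type (ii) are, by construction, exactly the relations imposed in passing from the family $\{\widehat{HFL}(\mathbb{L}\sqcup C)\}_C$ to $\widehat{cHFL}(\mathbb{L},K)$. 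The hard part is verifying carefully, using Zemke's functoriality and his grading formulas, that the cobordism maps and handle-slide maps assemble precisely into these relations; this Cerf-theoretic accounting, together with the grading adjustments in Definition~\ref{def:module}, is where all the technical work will concentrate.
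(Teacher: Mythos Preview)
Your overall architecture is right and matches the paper's, but you have missed the one genuinely new ingredient that distinguishes the Floer lasagna setting from the skein lasagna one: the \emph{dividing arcs} $\mathcal{A}\subset\Sigma$. A Floer lasagna filling is not just a surface with input labels; the surface is decorated by arcs cutting it into $\Sigma_{\mathbf{w}}$ and $\Sigma_{\mathbf{z}}$ regions, and Zemke's cobordism maps depend on this decoration. When you put $\Sigma$ in standard form so that it meets each $2$-handle in parallel cocore disks, you still have no control over how $\mathcal{A}$ sits on those disks. The paper calls such a filling \emph{almost model}; a \emph{model} filling further requires that each cocore disk carry a single diameter as its dividing arc. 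Getting from almost model to model requires an isotopy of the disk moving some dividing arc through the center, and different choices of this isotopy differ by Dehn twists of the annulus, which induce the \emph{basepoint moving map} on $\widehat{HFL}$. This is precisely the third relation $v\sim F_{m_{i,j}}(v)$ in the definition of $\widehat{cHFL}(\mathbb{L},K)$, and it does not appear anywhere in your analysis.

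Concretely, your injectivity argument lists only two kinds of ambiguity (cobordisms in $B^4$ and handle slides/stabilizations of the cable), which correspond to the braid group action $F_\tau$ and the pair-of-pants relation $F_P$. But there is a third: the choice of how to standardize the decoration on each cocore disk. The paper handles this with a sequence of lemmas on quasi-(de)stabilization maps $S^\pm,T^\pm$, first showing that any almost model filling is equivalent to a linear combination of model ones (by writing $v=\sum v_i\otimes u_i$ under $\widehat{HFL}(L,\mathbf{w},\mathbf{z})\cong\widehat{HFL}(L,\mathbf{w}',\mathbf{z}')\otimes V^{\otimes(n-m)}$ and using that closed-disk regions of $\Sigma_{\mathbf{w}}$ or $\Sigma_{\mathbf{z}}$ force the cobordism map to vanish), and then showing that two different standardizations differ exactly by a power of the basepoint moving map. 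A similar issue arises for the decorations on the braid and pair-of-pants cobordisms themselves: one must argue that, up to basepoint moving, the decorations can be taken to be the specific ones used in defining $\widehat{cHFL}$. Without this layer of argument your inverse map is not well defined.
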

This is the main theorem of the paper. The proof is in the same spirit as that in \cite{manolescu2020skein}. The new feature is that on the filling surfaces $\Sigma$ of a Floer lasagna filling, there are some dividing arcs $\mathcal{A}\subset \Sigma$ dividing $\Sigma$ into two subsurfaces $\Sigma_{\mathbf{z}}$ and $\Sigma_{\mathbf{w}}$. In search of a good representative, we not only need to make the filling surface in `standard' position, but also the dividing arcs. We call Floer lasagna fillings with the filling surface in `standard' position by \textit{almost model Floer lasagna fillings}, and we call those such that the dividing arcs are in `standard' position as well \textit{model Floer lasagna fillings}. See Definitions  \ref{def:model} and \ref{def:nearly-model}. We prove Theorem \ref{theo:main} by showing we can find a model Floer lasagna filling in each equivalent class. The ambiguity in choosing such a representative leads to various relations quotiented out in defining the cabled link Floer homology, namely the braid group action, pair-of-pants map and basepoint moving action.

	 Changing dividing arcs while keeping the filling surface induces a special kind of cobordism maps on link Floer homology, called the quasi-(de)stabilization maps. These first appeared in \cite{manolescu2010heegaard}, and were further developed in \cite{zemke2017quasistabilization}, \cite{zemke2019link}. We will follow the treatment of this topic in \cite[Section 4]{zemke2019link}.
	 
	 After proving Theorem \ref{theo:main}, we give some example calculations of the Floer lasagna module in the case when $W$ is obtained by attaching a $2$-handle along a $0$-framed unknot $U$ to $B^4$, i.e. $W = S^2\times D^2$, and a special kind of multi-based links $\mathbb{L}\subset \partial W = S^2\times S^1$.

	 For the empty link $\mathbb{L}$, we obtained a similar result as in the case of skein lasagna module. Compare \cite[Theorem 1.2]{manolescu2020skein}. This is as expected, because for unlinks in $S^3$, Juh\'asz and Marengon proved in \cite{juhasz2018computing} that the functor $\widehat{\mathit{HFL}}$ coincides with the reduced Khovanov homology functor $\widetilde{Kh}$.
	 \begin{theorem}
	 	 Let $W=S^2\times D^2$. For each $\alpha \in H_2(W,\emptyset;\mathbb{Z}) = \mathbb{Z}$, the Floer lasagna module $\mathcal{FL} (W,\emptyset,\alpha)$ of relative homology class $\alpha$  is given by \[\mathcal{FL} (W,\emptyset,\alpha) = \bigoplus_{k\in \mathbb{N}}\mathbb{F}_2,\]
	 	 where there is a copy of $\mathbb{F}_2$ in each Maslov grading $-k$.
	 	 \label{thm:link0}
	 \end{theorem}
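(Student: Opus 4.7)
The plan is to apply Theorem~\ref{theo:main} to identify $\mathcal{FL}(W, \emptyset, \alpha)$ with the class-$\alpha$ summand of $\widehat{cHFL}(\emptyset, U)$, where $U \subset S^3$ is the $0$-framed unknot attaching the $2$-handle; here $\alpha \in H_2(W, \emptyset;\mathbb{Z}) = \mathbb{Z}$ corresponds to the signed cable number on $U$. I would assume $\alpha = n \geq 0$; the case $\alpha < 0$ is analogous by orientation reversal.

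Next, I would work with model Floer lasagna fillings. After the main theorem, every class in $\mathcal{FL}(W, \emptyset, n)$ is represented by a model filling whose surface in $W = S^2 \times D^2$ can be isotoped into standard position: a disjoint union of $n$ parallel copies of $S^2 \times \{\mathrm{pt}\}$ together with a small closed surface of genus $k$ sitting in a ball disjoint from the parallel spheres. Since $\widehat{HFL}(\emptyset) = \mathbb{F}_2$ for the empty link in $S^3$, each such configuration contributes a single $\mathbb{F}_2$, and the Maslov grading adjustment by the Euler characteristic of the filling surface yields the shift $-k$. This produces a candidate generator in each Maslov grading $-k$ for $k \geq 0$, and the fact that the extra genus lives in a disjoint ball explains why the answer should not depend on $n$.

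Finally, I would verify that (a) these generators are independent, and (b) there are no others. For (b), I would unpack the cabled link Floer homology: for each $j \geq 0$, compute $\widehat{HFL}$ of the $(n+2j)$-component unlink coming from the $(n+2j)$-cable of $U$ with $j$ oppositely oriented strand pairs, and then take the colimit quotient by the braid group action on the strands, the basepoint-moving action, and the pair-of-pants merging maps. The braid and basepoint relations symmetrize the tensor factors, and the pair-of-pants colimit cuts each Maslov grading down to a single $\mathbb{F}_2$, matching the candidate generators from the previous step.

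The hard part will be verifying that the pair-of-pants maps do not annihilate the candidate generator in any Maslov grading $-k$, i.e. that the cokernels of the successive merging maps $\widehat{HFL}(U_{n,j+1}) \to \widehat{HFL}(U_{n,j})$ stabilize to $\mathbb{F}_2$ in the expected Maslov grading, compatibly with the symmetrizing actions. I would address this by exhibiting an explicit detecting cobordism, for instance a genus-reducing cap-off inside a small ball, that shows the genus-$k$ class survives nontrivially in the quotient. Once independence in each Maslov grading is established, the identification $\mathcal{FL}(W, \emptyset, \alpha) \cong \bigoplus_{k \in \mathbb{N}} \mathbb{F}_2$ with one summand in each Maslov grading $-k$ follows.
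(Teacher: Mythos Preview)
Your reduction to $\widehat{cHFL}(\emptyset,U)$ via Theorem~\ref{theo:main} is correct, and your third paragraph is on the right algebraic track. However, the geometric picture in your second paragraph is wrong and should be discarded. In a model filling (Definition~\ref{def:model}) the surface $\Sigma$ consists \emph{only} of the capping disks $f(B_\epsilon(0)\times x_{i,j}^\pm)$; there is no room for an auxiliary closed genus-$k$ component. Moreover the input ball $B$ is all of $W$ minus neighborhoods of the cocores, so the input link is the $(k^++k^-)$-component unlink, not the empty link. The different Maslov gradings $-k$ therefore come from the choice of input $v\in\widehat{HFL}(U^{k^++k^-})$, not from the topology of $\Sigma$. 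A closed surface placed in a small ball contributes only a scalar (the induced endomorphism of $\widehat{HFL}(\emptyset)=\mathbb{F}_2$), hence cannot generate new graded pieces.

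For the actual computation, the paper takes a much shorter route than the direct colimit you propose. Since the cables of the $0$-framed unknot are unlinks and all the cobordisms defining $F_\tau$, $F_P$, $F_{m_{i,j}}$ are elementary cobordisms between unlinks in $S^3\times I$, one invokes Juh\'asz--Marengon \cite{juhasz2018computing}, which identifies the $\widehat{HFL}$ functor on unlinks with the reduced Khovanov functor $\widetilde{Kh}$ (and shows the basepoint moving maps are trivial on unlinks, so that relation is vacuous). The computation of $\widehat{cHFL}(\emptyset,U)$ then becomes literally identical to the computation of the cabled Khovanov homology already carried out in \cite[Section~5]{manolescu2020skein}, with the Maslov grading playing the role of the quantum grading. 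This bypasses the ``hard part'' you flagged: the survival of generators under the pair-of-pants colimit is exactly what Manolescu--Neithalath verify on the Khovanov side, and Juh\'asz--Marengon transports that verification to $\widehat{HFL}$.
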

    For a multi-based link $\mathbb{L}$ which intersects geometrically a capping disk of the unknot $U$ once, we get that the Floer lasagna module vanishes:
    
    \begin{theorem}
    	Suppose $W$ is obtained from $B^4$ by attaching a $2$-handle along a $0$-framed unknot $K$, and $\mathbb{L}=(L,\w,\z)$ is a multi-based link in $\partial W$ which intersects a capping disk of $K$ geometrically once. Then \[\mathcal{FL}(W,\mathbb{L}) =0.\]
    	\label{thm:link1}
    \end{theorem}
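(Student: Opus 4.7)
By Theorem~\ref{theo:main} there is a grading-preserving isomorphism $\mathcal{FL}(W,\mathbb{L})\cong\widehat{cHFL}(\mathbb{L},K)$, so the task reduces to proving that the cabled link Floer homology $\widehat{cHFL}(\mathbb{L},K)$ vanishes. The first step is to unpack the topological consequence of the hypothesis: some component $L_0\subset L$ meets a Seifert disk of $K$ transversely in a single point, so $L_0\cup K$ is a Hopf link in $S^3=\partial B^4$. After attaching the $2$-handle, $L_0$ bounds a co-core disk $D_0=\{\mathrm{pt}\}\times D^2\subset W=S^2\times D^2$, and it meets the core sphere $S=S^2\times\{\mathrm{pt}\}$ algebraically once.

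The heart of the argument is to translate this topological input into a relation inside the colimit defining $\widehat{cHFL}(\mathbb{L},K)$. An arbitrary class is represented by some $x\in\widehat{HFL}(L\cup K^n)$ for a cable $K^n$ of $K$. Because $L_0$ is a meridian of each strand of $K^n$, one can apply a pair-of-pants cobordism that merges a cable strand into $L_0$, expressing $x$ as the image of a class on a simpler link under the associated link Floer cobordism map; a basepoint-moving action along $L_0$, produced by sliding $L_0$ once around the cable through the Hopf-link region, then identifies this simpler class with one that is already killed in the quotient. Passing to the colimit over cable decorations yields $[x]=0$, and hence $\widehat{cHFL}(\mathbb{L},K)=0$.

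The principal obstacle will be making the pair-of-pants and basepoint-moving operations fit together rigorously at the chain level. One must check that the composition of these two operations is simultaneously equal to the identity (by the Hopf-link geometry of $L_0\cup K$) and to zero (by the quotient relations defining $\widehat{cHFL}$), invoking the explicit formulas for the quasi-(de)stabilization maps from \cite[Section~4]{zemke2019link} and carefully tracking the dividing arcs $\mathcal{A}$ and the basepoints $\w,\z$ near $L_0\cup K$. A useful simplification is to first prove the vanishing in the case $L=L_0$, where $\widehat{HFL}$ of the cabled link can be computed directly from the Hopf link, and then promote to the general statement by naturality with respect to the inclusion of $L_0$ into $L$; this isolates the essential Floer-theoretic input from the topological bookkeeping.
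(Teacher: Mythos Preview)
Your reduction to $\widehat{cHFL}(\mathbb{L},K)=0$ via Theorem~\ref{theo:main} is correct, and the observation that $L_0\cup K$ is a Hopf link is the right topological starting point. However, the mechanism you propose for vanishing does not match the relations that actually define $\widehat{cHFL}$, and as written it has a genuine gap.

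The relations in Definition~\ref{def:cabled} are: the braid action on cable strands, the pair-of-pants relation $v\sim F_P(v\otimes B)$ and $0\sim F_P(v\otimes T)$ where $F_P$ takes an unknot $U$ to two new oppositely oriented copies of $K$, and the basepoint-moving maps $F_{m_{i,j}}$ \emph{along the cable strands only}. There is no pair-of-pants that ``merges a cable strand into $L_0$'', and there is no basepoint-moving relation along $L_0$ itself. So the two operations you compose are not among the relations being quotiented, and your claimed ``identity~$=0$'' equation has no standing in $\widehat{cHFL}$. Even granting some reinterpretation, you never explain why the composite would be the identity on one hand and zero on the other; the Hopf-link geometry alone does not force this.

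The paper's argument uses the Hopf-link condition differently: it gives a connected-sum decomposition $L\cup K(k^+,k^-)\cup U \cong L_1\#\bigl(L\cup K(k^+,k^-)\bigr)$ and $L\cup K(k^++1,k^-+1)\cong L_2\#\bigl(L\cup K(k^+,k^-)\bigr)$, where $L_1,L_2$ are fixed small links independent of $k^\pm$. Zemke's connected-sum formula then factors $F_P\cong F'_P\otimes\mathrm{id}$, so the pair-of-pants relation becomes $v\sim F'_P(B)\otimes v$. The entire problem is reduced to the single local computation $F'_P(B)=0$, which the paper carries out by writing $F'_P=F^{\z}_B\circ T^+$, drawing explicit Heegaard diagrams for $L'_1$ and $L_2$, and using a multi-Alexander grading argument to see that no holomorphic triangle avoiding all basepoints can contribute to $F^{\z}_B(B\otimes\xi^{\w})$. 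This concrete vanishing $F'_P(B)=0$ is the missing idea in your proposal; without it, or an equivalent computation, the argument does not close.
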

	 In general, it is not so easy to compute the cabled link Floer homology. A more general approach would ideally involve some cobordism maps for bordered link Floer homology.

	 One advantage of link Floer homology compared to Khovanov-Rozansky homology is that we can define cobordism maps for link cobordisms in any $4$-manifolds, not only $S^3\times I$. Therefore, for a given Floer lasagna filling, we can treat it as an input \[ \otimes_i v_i \in \otimes_i \widehat{HFL}(S^3_i,\mathbb{L}_i) \cong \widehat{HFL}(\sqcup_iS^3_i,\sqcup_i\mathbb{L}_i ),\] together with a link cobordism $(W\backslash(\sqcup_iS^3_i),\Sigma) $ from  $(\sqcup_iS^3_i,\sqcup_i\mathbb{L}_i )$ to $(\partial W, \mathbb{L})$. Therefore, one can define the evaluation map:
	 \begin{equation*}
	 	\begin{split}
	 		ev: \mathcal{FL}(W,\mathbb{L}) &\to \widehat{HFL}(\partial W,\mathbb{L}) \\
	 		[\mathcal{F}]&\to F_{\Sigma}(\otimes v_i).
	 	\end{split}
	 \end{equation*}
 
    One question is, how different is the Floer lasagna module $\mathcal{FL}(W,\mathbb{L})$ from the link Floer homology $\widehat{HFL}(\partial W,\mathbb{L})$? In other words:
    
    \begin{question}
    	What are the kernel and cokernel of the map $ev$?
    \end{question}
    In the case when $W=B^4$, we have $\mathcal{FL}(W,\mathbb{L})\cong \widehat{HFL}(S^3,\mathbb{L})$. See Proposition \ref{lem:B^4}. In the case of Theorem \ref{thm:link1}, even though $\mathbb{L}$ is not null-homologous in $\partial W = S^2\times S^1$, the would-be link Floer homology should be $0$, which agrees with $\mathcal{FL}(W,\mathbb{L})$. In this way, we would like to view $\mathcal{FL}(W,\mathbb{L})$ as some kind of generalization of $\widehat{HFL}(\partial W,\mathbb{L})$. 
    
    Another question is the relation between the Floer lasagna module and the skein lasagna module, especially in the view of the spectral sequence from Khovanov homology to link Floer homology as in \cite{dowlin2018spectral}:
    
    \begin{question}
    	Is there any relation between $\mathcal{FL}(W,\mathbb{L})$ and $S^N_0(W,\mathbb{L})$?
    \end{question}

    \textbf{Organization of the paper.} In Section \ref{sec:lasag module}, we define the Floer lasagna module, and the gradings on it. In Section \ref{sec:equivalence}, we define the cabled link Floer homology $\widehat{cHFL}(L,\w,\z, K)$, and prove Theorem \ref{theo:main}. In Section \ref{sec:calculation}, we perform some calculations of Floer lasagna modules, and prove Theorems \ref{thm:link0}, \ref{thm:link1}.

    \vspace*{2mm}
    \textbf{Acknowledgements} This work was partially supported by NSF grant number DMS-2003488. The author wishes to thank Ciprian Manolescu for his generous help in writing up this paper.

	 \section{Floer lasagna modules}
	 \subsection{Definition of Floer lasagna modules}
	 \label{sec:lasag module}
	 \begin{definition}
	 	A \textbf{multi-based link} $\mathbb{L}$ in a $3$-manifold $Y$ is a triple $\mathbb{L}=(L,\mathbf{w},\textbf{z})\in Y,$ of an oriented link $L$ with two sets of basepoints $\mathbf{w}$ and $\textbf{z}$, such that they alternate when we travel along each component of $L$, and there are at least two basepoints on each component of $L$.
	 \end{definition}
     The following definition is motivated by the similar one using Khovanov homology instead of link Floer homology. See \cite{morrison2019invariants} and \cite{manolescu2020skein}.
	 \begin{definition}
	 	\label{def:fillings}
	 	Let $W$ be a smooth oriented $4$-manifold, and $\mathbb{L}=(L,\mathbf{w},\textbf{z})$ a multi-based link in $\partial W$. A \textbf{Floer lasagna filling} $\mathcal{F} =(B_i,\mathbb{L}_i,\Sigma,\mathcal{A},v_i)$ of $(W,\mathbb{L})$ is the following data:
	 	\begin{itemize}
	 		\item a disjoint union of finitely many $4$-balls $B_i$, called the \textbf{input balls}, embedded in the interior of $W$, with a multi-based link $\mathbb{L}_i=(L_i,\mathbf{w}_i,\textbf{z}_i) \in B_i$. 
	 		\item an oriented properly embedded surface $\Sigma$ in $W \backslash( \cup_i B_i)$, with $\Sigma \cap \partial W = L$ and $\Sigma \cap \partial B_i=L_i,$ whose induced orientations agree with those of $L$ and $L_i$.
	 		\item A set of embedded $1$-manifolds  $\mathcal{A}\in \Sigma$, called \textbf{dividing arcs}, such that the components of $\Sigma \backslash \mathcal{A}$ are partitioned into type-$\mathbf{w}$ and type-$\textbf{z}$ sub-surfaces $\Sigma \backslash \mathcal{A} = \Sigma_{\mathbf{w}}\sqcup \Sigma_{\textbf{z}}$, with $\mathbf{w},\mathbf{w}_i\in \Sigma_{\mathbf{w}}$ and $\mathbf{z},\mathbf{z}_i\in\Sigma_{\textbf{z}}$. In diagrams, we will shade the $\Sigma_{\mathbf{w}}$ subsurfaces, while the $\Sigma_{\mathbf{z}}$ subsurfaces are left blank.  What's more, the intersection of $\mathcal{A}$ with $L$ divides $L$ into components such that each component contains exactly one basepoints of $\mathbf{w}\cup \textbf{z}$. Similar condition holds for each $L_i$.
	 		\item an input $v_i\in \widehat{HFL}(S^3, \mathbb{L}_i)$ for each $i$, where the link Floer homology are taking with coefficient $\mathbb{F}_2$.
	 	\end{itemize}
	 \end{definition} 
 
 In the language of \cite{zemke2019link}, this is the same as a decorated link cobordism $(W\backslash \cup_i B_i,\Sigma^{\sigma})$ from $(\cup_iB_i,\cup_iL_i)$ to $(\partial W, L)$, with an input $v_i\in \widehat{HFL}(S^3,\mathbb{L}_i)$, where $\sigma$ is the trivial coloring which sends all elements in $\mathbf{w}\bigcup \cup_i\mathbf{w}_i$ to $U$ and all elements in $\textbf{z}\bigcup \cup_i\textbf{z}_i$ to $V$.

\begin{figure}[h]
	\[
	{
		\fontsize{10pt}{10pt}\selectfont
		\def\svgwidth{3.5in}
		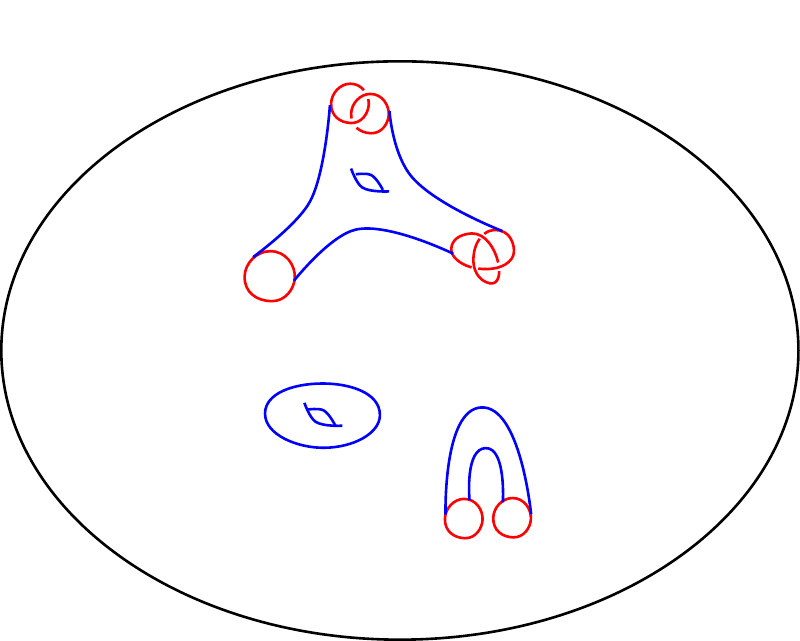
	}
	\]
	\caption{Example of Floer lasagna modules}
	\label{fig:example-of-lasagna-modules}
\end{figure}

 We define the following equivalence relation on the set of Floer lasagna fillings.
 \begin{definition}
 	\label{def:equiv fillings}
 	Define the following relation $\sim'$ on the set of Floer lasagna fillings. For two Floer lasagna fillings $\mathcal{F}=(B_i,\mathbb{L}_i,\Sigma,\mathcal{A},v_i)$ and $\mathcal{F}'=(B'_i,\mathbb{L}'_i,\Sigma',\mathcal{A}',v'_i)$ of $(W,\mathbb{L},\mathbf{w},\textbf{z})$, we say $\mathcal{F}\sim' \mathcal{F}'$ if there exists an input ball $B'_i$ of $\mathcal{F}'$ which satisfies the following conditions:
 	 \begin{itemize}
 	 	\item $B'_i$ contains some input balls $\cup_{j\in \mathbb{J}} B_j$ of $\mathcal{F}$, and is disjoint from other input balls of $\mathcal{F}$;
 	 	\item up to isotopy relative to boundaries, $(\Sigma',\mathcal{A}') $ is the intersection of $(\Sigma,\mathcal{A})$ with $W \backslash B'_i$;
 	 	\item 
 	 	 up to isotopy, the link $L'_i\in \partial B'_i$ is the intersection $\Sigma \cap \partial B'_i$, and there is one basepoint in each component of $L'_i\backslash\mathcal{A}$, determined by the partition $\Sigma \backslash \mathcal{A} = \Sigma_{\mathbf{w}}\sqcup \Sigma_{\textbf{z}}$;
 	 	 \item the input $v'_i$ equals $F(\otimes v_j )$, where $F$ is the cobordism map induced by the intersection of $(\Sigma,\mathcal{A})$ with $B'_i \backslash \cup_j B_j$, viewed as a decorated link cobordism from $\cup_j (B_j,\mathbb{L}_j)$ to $(B'_i,\mathbb{L}'_i)$.   	 	
 	 \end{itemize}
  
  Define the equivalence relation $\sim$ to be the symmetric closure of $\sim'$ on the set of Floer lasagna fillings of $(W,\mathbb{L})$.
 \end{definition}

See \cite[Section 12]{zemke2019link} for the definition of the cobordism maps induced by decorated link cobordisms. See Figure \ref{fig:grading-calculation} for an example of equivalent Floer lasagna fillings.

\begin{figure}[h]
	\[
	{
		\fontsize{10pt}{10pt}\selectfont
		\def\svgwidth{3.5in}
\begingroup%
  \makeatletter%
  \providecommand\color[2][]{%
    \errmessage{(Inkscape) Color is used for the text in Inkscape, but the package 'color.sty' is not loaded}%
    \renewcommand\color[2][]{}%
  }%
  \providecommand\transparent[1]{%
    \errmessage{(Inkscape) Transparency is used (non-zero) for the text in Inkscape, but the package 'transparent.sty' is not loaded}%
    \renewcommand\transparent[1]{}%
  }%
  \providecommand\rotatebox[2]{#2}%
  \newcommand*\fsize{\dimexpr\f@size pt\relax}%
  \newcommand*\lineheight[1]{\fontsize{\fsize}{#1\fsize}\selectfont}%
  \ifx\svgwidth\undefined%
    \setlength{\unitlength}{218.54019757bp}%
    \ifx\svgscale\undefined%
      \relax%
    \else%
      \setlength{\unitlength}{\unitlength * \real{\svgscale}}%
    \fi%
  \else%
    \setlength{\unitlength}{\svgwidth}%
  \fi%
  \global\let\svgwidth\undefined%
  \global\let\svgscale\undefined%
  \makeatother%
  \begin{picture}(1,0.53430625)%
    \lineheight{1}%
    \setlength\tabcolsep{0pt}%
    \put(0,0){\includegraphics[width=\unitlength,page=1]{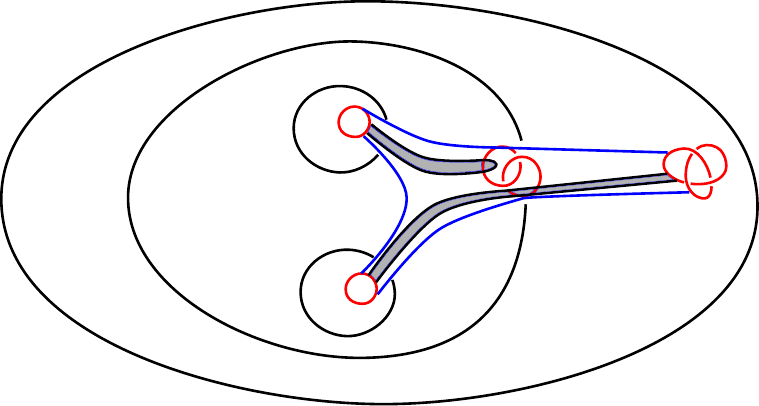}}%
    \put(0.41277547,0.32336249){\makebox(0,0)[lt]{\lineheight{1.25}\smash{\begin{tabular}[t]{l}$B_1$\end{tabular}}}}%
    \put(0.41717741,0.10534287){\makebox(0,0)[lt]{\lineheight{1.25}\smash{\begin{tabular}[t]{l}$B_2$\end{tabular}}}}%
    \put(0.18192889,0.38867159){\makebox(0,0)[lt]{\lineheight{1.25}\smash{\begin{tabular}[t]{l}$B'$\end{tabular}}}}%
    \put(0.23676086,0.18476379){\makebox(0,0)[lt]{\lineheight{1.25}\smash{\begin{tabular}[t]{l}$W'$\end{tabular}}}}%
    \put(0.40819622,0.24413838){\makebox(0,0)[lt]{\lineheight{1.25}\smash{\begin{tabular}[t]{l}$\Sigma\cap W'$\end{tabular}}}}%
    \put(0.79236388,0.22997058){\makebox(0,0)[lt]{\lineheight{1.25}\smash{\begin{tabular}[t]{l}$\Sigma'$\end{tabular}}}}%
    \put(0.69054374,0.34401556){\color[rgb]{0,0,0}\makebox(0,0)[lt]{\lineheight{1.25}\smash{\begin{tabular}[t]{l}$\mathbb{L}'$\end{tabular}}}}%
  \end{picture}%
\endgroup%

	}
	\]
	\caption{Equivalent Floer lasagna fillings}
	\label{fig:grading-calculation}
\end{figure}
In particular, by taking $B'_i = B_i$, we see that two lasagna fillings $\mathcal{F},\mathcal{F}'$ are equivalent if there is an isotopy of $W$ relative to $\partial W$ mapping $\mathcal{F}$ to $\mathcal{F}'$.

This equivalence relation essentially says we don't care about the `local' behaviour of the lasagna fillings, where 'local' means everything happens inside a $4$-ball in $W$.

Now we define the Floer lasagna module, which is akin to the one in \cite{morrison2019invariants} and \cite{manolescu2020skein}, except we apply the Heegaard Floer functor instead of the Khovanov-Rozansky functor.

\begin{definition}
	\label{def:module}
   For a smooth oriented $4$-manifold $W$ and a multi-based link $\mathbb{L}=(L,\mathbf{w},\mathbf{z})\in \partial W$, we define the \textbf{Floer lasagna module} $ \mathcal{FL}(W,\mathbb{L})$ as the $\mathbb{F}_2$-module generated by the equivalence classes of Floer lasagna fillings of $\mathbb{L}$ under the equivalence relation $\sim$, such that linear combinations of lasagna fillings are multilinear in the labels $v_i$.
\end{definition}

\subsection{Relative homology classes }

As in the case of skein lasagna modules, we can consider the relative homology class of a Floer lasagna filling $\mathcal{F}=(B_i,\mathbb{L}_i,\Sigma,\mathcal{A},v_i)$, which is given by the filling surface $\Sigma$, and takes value in \[\left[\Sigma\right]\in H_2(W,L \cup (\cup_iL_i);\mathbb{Z})\cong H_2(W,L;\mathbb{Z}),\]
where the last isomorphism could be realized by capping the surface $\Sigma$ along each $L_i\in \partial B_i$ by some surface in $B_i$.
 
If two Floer lasagna fillings $\mathcal{F}=(B_i,\mathbb{L}_i,\Sigma,\mathcal{A},v_i)$ and $\mathcal{F}'=(B'_i,\mathbb{L}'_i,\Sigma',\mathcal{A}',v'_i)$ are equivalent, then they represent the same relative homology class in $H_2(W,L;\mathbb{Z})$, as $\Sigma$ and $\Sigma'$ are only differed in some $B'_i\backslash \cup_{j}(B_j)$. Therefore, we can define the relative homology class for each equivalent class of Floer lasagna fillings.

\begin{definition}
	For each $\alpha \in H_2(W,L;\mathbb{Z})$, define $\mathcal{FL}(W,\mathbb{L}, \alpha)$ as the subgroup of $\mathcal{FL}(W,L,\w,\z, \alpha)$ generated by equivalent classes of Floer lasagna fillings representing the relative homology class $\alpha$. We have the decomposition \[\mathcal{FL}(W,\mathbb{L}) =\bigoplus_{\alpha \in H_2(W,L;\mathbb{Z}) } \mathcal{FL}(W,\mathbb{L},\alpha).\]
\end{definition}

\subsection{Gradings}
We define a Maslov grading and an Alexander grading on the Floer lasagna module. We will use the collapsed Alexander grading instead of the multi-variable Alexander grading on $\mathbb{L}$. Otherwise more data is required in the definition of the Floer lasagna fillings, i.e. other choices of the colorings in Zemke's language. The main point in the definition is to check that it is well-defined under the equivalence relation of Floer lasagna fillings, which is a simple application of grading formulas of the link cobordism maps in \cite{zemke2019grading}.

\begin{remark}
	Note that there are two different conventions for the normalization of the absolute Maslov grading, one requires that the top degree generator of $\widehat{HF}(S^3,\mathbf{w})$ to be of Maslov grading $0$, while the other requires it to be $\frac{1}{2}(|\mathbf{w}|-1)$. We use the first convention in this paper, which leads to some changes of the formulas as presented in \cite{zemke2019grading}.
\end{remark}

\begin{definition}
	For a lasagna filling $\mathcal{F} =(B_i,\mathbb{L}_i,\Sigma,\mathcal{A},v_i)$ such that each $v_i$ is a homogeneous element of $\widehat{HFL}(S^3, L_i,\mathbf{w}_i,\mathbf{z}_i)$, define the \textbf{Maslov grading} of $\mathcal{F}$ to be \[M(\mathcal{F}) = \chi(\Sigma_{\mathbf{w}}) +\sum_{i} M(v_i), \]
	and the \textbf{Alexander grading} to be \[A(\mathcal{F}) = \frac{\chi(\Sigma_{\mathbf{w}})-\chi(\Sigma_{\textbf{z}})}{2} + \sum_{i}A(v_i),\]
	where $M(v_i)$ is the absolute Maslov grading of $v_i\in \widehat{HFL}(S^3,L_i,\mathbf{w}_i,\textbf{z}_i)$ and $A(v_i)$ is the sum of the multi-Alexander grading of $v_i$. 
\end{definition}

\begin{lemma}
	The Maslov and Alexander gradings are well-defined for Floer lasagna modules.
\end{lemma}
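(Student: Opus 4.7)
The plan is to check invariance under the generating relation $\sim'$ of Definition \ref{def:equiv fillings}; since $\sim$ is its symmetric closure, this suffices. Suppose $\mathcal{F}\sim'\mathcal{F}'$ as in that definition, with a distinguished input ball $B'_i$ of $\mathcal{F}'$ containing a subfamily $\{B_j\}_{j\in J}$ of input balls of $\mathcal{F}$. Writing $X_0 = B'_i\setminus\cup_{j\in J}B_j$, $\Sigma_0 = \Sigma\cap X_0$, and $\mathcal{A}_0 = \mathcal{A}\cap X_0$, the triple $(X_0,\Sigma_0,\mathcal{A}_0)$ is a decorated link cobordism from $\sqcup_{j\in J}(S^3,\mathbb{L}_j)$ to $(S^3,\mathbb{L}'_i)$, and by assumption $v'_i = F_{X_0,\Sigma_0,\mathcal{A}_0}(\otimes_{j\in J}v_j)$. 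Since all input balls of $\mathcal{F}$ not indexed by $J$ appear identically in $\mathcal{F}'$ with the same labels, it is enough to show
\[\chi(\Sigma_\w) - \chi(\Sigma'_\w) = M(v'_i) - \sum_{j\in J}M(v_j)\]
together with the analogous identity for the Alexander grading.

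Next I would establish the surface-side identity via Mayer--Vietoris. The $\mathbf{w}$-subsurface $\Sigma_\w$ decomposes as $\Sigma'_\w\cup(\Sigma_0)_\w$ glued along $\Sigma_\w\cap L'_i$, which consists of the $\mathbf{w}$-segments of $L'_i$ cut out by $\mathcal{A}\cap L'_i$. By the alternating basepoint condition in Definition \ref{def:fillings}, each such segment contains exactly one basepoint of $\w'_i$, so this intersection is a disjoint union of $|\w'_i|$ arcs, giving
\[\chi(\Sigma_\w) = \chi(\Sigma'_\w) + \chi((\Sigma_0)_\w) - |\w'_i|,\]
and similarly for $\z$. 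Moreover the alternating condition forces $|\w'_i| = |\z'_i|$ on each component of $L'_i$, so in the Alexander grading these boundary terms cancel.

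The heart of the argument is then to compare the right-hand sides above with the Maslov and collapsed-Alexander grading-change formulas for decorated link cobordism maps proved in \cite{zemke2019grading}, after translating to the normalization fixed in the remark above. Since $X_0\subset B^4$ is a codimension-zero submanifold, $\sigma(X_0)=0$, $b_2(X_0)=0$, and $[\Sigma_0]\cdot[\Sigma_0]=0$, so every topological correction in Zemke's formulas vanishes, leaving
\[M(v'_i)-\sum_{j\in J}M(v_j) = \chi((\Sigma_0)_\w) - |\w'_i|\quad\text{and}\quad A(v'_i)-\sum_{j\in J}A(v_j) = \tfrac{1}{2}\bigl(\chi((\Sigma_0)_\w)-\chi((\Sigma_0)_\z)\bigr),\]
which combined with the Mayer--Vietoris identities yield $M(\mathcal{F})=M(\mathcal{F}')$ and $A(\mathcal{F})=A(\mathcal{F}')$.

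The main obstacle is purely bookkeeping: reconciling Zemke's normalization with the one fixed in the remark (a shift by constants depending on the numbers of basepoints), and carefully tracking how many arcs appear in $\mathcal{A}\cap L'_i$ so that the Mayer--Vietoris counts come out correctly. Once the conventions are aligned, the vanishing of the topological invariants of the embedded cobordism $X_0\subset B^4$ makes the cancellation immediate.
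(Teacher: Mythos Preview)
Your proposal is correct and follows essentially the same approach as the paper's proof: reduce to the generating relation $\sim'$, apply Zemke's grading-change formula from \cite{zemke2019grading} to the cobordism $B'_i\setminus\cup_j B_j$ where all topological corrections vanish, and combine this with the Euler characteristic additivity (your Mayer--Vietoris step) along the $|\w'_i|$ arcs on $L'_i$. The paper carries out exactly this computation, including the explicit translation between Zemke's $gr_\w$ and the normalization $M$ used here.
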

\begin{proof}

 It is enough to show that for $\mathcal{F}=(B_i,\mathbb{L}_i,\Sigma,\mathcal{A},v_i),$ and $\mathcal{F}'=(B'_i,\mathbb{L}'_i,\Sigma',\mathcal{A}',v'_i)$ such that $\mathcal{F}\sim' \mathcal{F'}$ as in Definition \ref{def:equiv fillings}, we have
 \[M(\mathcal{F}) = M(\mathcal{F}') \text{ and } A(\mathcal{F}) = A(\mathcal{F}').\] Let $W' = B'_i \backslash \cup_{j\in \mathbb{J}} B_j$, where $B'_i$ and $B_j$ are as defined in Definition \ref{def:equiv fillings}. It follows from \cite[Theorem 1.4]{zemke2019grading}: For a cobordism $(W,\mathcal{F}):(Y_1,\mathbb{L})\to (Y_2,\mathbb{L}_2)$  \[gr_{\mathbf{w}}(F_{W,\mathfrak{s}}(\textbf{x})) - gr_{\mathbf{w}}(\textbf{x}) = \frac{c_1(\s)^2-2\chi(W)-3\sigma(W)}{4}+\chi(\Sigma_{\mathbf{w}})-\frac{|\mathbf{w}_1|+|\mathbf{w}_2|}{2},\] We apply this to the cobordism $(W', \Sigma \cap W'):(\cup_j B_j, \cup_j L_j) \to (B'_i,L'_i)$. Because of the different conventions for normalization, there is a difference with his Maslov grading $gr_{\mathbf{w}}$ and our $M$: \[gr_{\mathbf{w}}(v'_i) = M(v'_i) + \frac{1}{2}(|w'_i|-1), \text{ and } gr_{\mathbf{w}}(\otimes_j v_j) = M(\otimes_j v_j) + \sum_{j\in\mathbb{J}}\frac{1}{2}(|w'_j|-1),\]
  As $W'$ has trivial $H_2(W';\mathbb{Z})$, it follows that \[M(v'_i)  = M(\otimes_j v_j) + \chi (\Sigma_{\mathbf{w}} \cap W')-|\mathbf{w}'_i|.\]  Since there is exactly one $\mathbf{w}$-basepoint in $\mathbf{w}'_i$ for each component of $\Sigma_{\mathbf{w}} \cap \partial B'_i,$ and $\Sigma'_{\mathbf{w}}$ is isotopic  to $\Sigma_{\mathbf{w}} \backslash (\Sigma_{\mathbf{w}}\cap W'),$ we have \[\chi(\Sigma_{\mathbf{w}}) = \chi(\Sigma_{\mathbf{w}}\cap W') + \chi(\Sigma_{\w}')-|\w'_i|,\] so \[M(\mathcal{F}') -M(\mathcal{F}) = M(v'_i) + \chi (\Sigma'_{\mathbf{w}})-M(\otimes v_j) - \chi (\Sigma_{\mathbf{w}})=0, \] as required.
 
 The verification for the well-definedness of Alexander grading follows similarly from the formula for the Alexander grading change of link cobordism maps in \cite[Theorem 1.4]{zemke2019grading} applied to the cobordism $(W', \Sigma \cap W')$, which says \[A(F(\mathbf{x})) - A(\mathbf{x}) = \frac{\chi(\Sigma_{\mathbf{w}})-\chi(\Sigma_{\mathbf{z}})}{2},\] when the cobordism $W'$ has trivial $H_2(W';\mathbb{Z})$. Note that \[\chi (\Sigma_{\mathbf{w}})-\chi(\Sigma_{\z}) = \chi(\Sigma_{\mathbf{w}}\cap W') -\chi(\Sigma_{\mathbf{z}}\cap W')  + \chi(\Sigma_{\w}')-\chi(\Sigma_{\z}'),\]
 as $|\w'_i| = |\z'_i|$, so  
 \[A(\mathcal{F}')-A(\mathcal{F}) = A(v'_i) + \frac{\chi(\Sigma'_{\mathbf{w}})-\chi(\Sigma'_{\mathbf{z}})}{2} -A(\otimes v_j)-\frac{\chi(\Sigma_{\mathbf{w}})-\chi(\Sigma_{\mathbf{z}})}{2}=0,\] as required.
\end{proof}

Let us see an example of the Floer lasagna modules, in the simplest case when $W=B^4$. From this, we can view the Floer lasagna module as an extension of the link Floer homology.
\begin{lemma}
	\label{lem:B^4}
	If $W = B^4$, and $\mathbb{L}=(L,\w,\z)$ a multi-based link in $\partial W = S^3$, then 
		\[\mathcal{FL}(B^4,\mathbb{L}) \cong \widehat{HFL}(S^3,\mathbb{L}),\]
		as doubly-graded $\mathbb{F}_2$-modules,with the multi-Alexander grading on $\widehat{HFL}$ collapsed to the single Alexander grading on $\mathcal{FL}(B^4,L,\w,\z)$ .
\end{lemma}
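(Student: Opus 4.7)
The plan is to exhibit mutually inverse maps between $\mathcal{FL}(B^4, \mathbb{L})$ and $\widehat{HFL}(S^3, \mathbb{L})$ via a ``collapse to a single input ball'' argument, a small-scale prototype of the strategy used for Theorem \ref{theo:main}. First I would define $\Phi: \mathcal{FL}(B^4, \mathbb{L}) \to \widehat{HFL}(S^3, \mathbb{L})$ by $\Phi([\mathcal{F}]) = F_{\Sigma, \mathcal{A}}(\otimes_i v_i)$, where $F_{\Sigma, \mathcal{A}}$ is Zemke's decorated link cobordism map associated to $(\Sigma, \mathcal{A}) \subset B^4 \setminus \sqcup_i B_i$, viewed as a cobordism from $\sqcup_i \mathbb{L}_i$ to $\mathbb{L}$. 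Well-definedness of $\Phi$ on equivalence classes is immediate from the composition law for link cobordism maps, because the elementary relation $\sim'$ in Definition \ref{def:equiv fillings} is precisely the replacement of a sub-cobordism by the image of its cobordism map applied to the input.

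Next I would define $\Psi: \widehat{HFL}(S^3, \mathbb{L}) \to \mathcal{FL}(B^4, \mathbb{L})$ using a trivial filling. Fix a collar of $\partial B^4$ and let $B' \subset \mathrm{int}(B^4)$ be the complement of a slightly smaller open collar, so that $\partial B' \cong S^3$ and $B^4 \setminus B'$ has a canonical product identification with $S^3 \times I$. Transport $\mathbb{L}$ through this product to a copy $\mathbb{L}' \subset \partial B'$. Given $v \in \widehat{HFL}(S^3, \mathbb{L})$, set $\Psi(v) := [\mathcal{F}_v]$ where $\mathcal{F}_v$ has the single input ball $B'$ with link $\mathbb{L}'$, cobordism surface the product cylinder $L \times I$ with product dividing arcs $\mathcal{A}_{\mathrm{cyl}}$, and input $v$. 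Independence of the choice of collar follows by applying $\sim'$ to nested collars.

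Verifying $\Phi \circ \Psi = \mathrm{id}$ then reduces to the basic fact that the decorated product cylinder induces the identity cobordism map in Zemke's theory. Verifying $\Psi \circ \Phi = \mathrm{id}$ is the main content. Given a filling $\mathcal{F}$, I would choose a 4-ball $B' \subset \mathrm{int}(B^4)$ containing all input balls $B_i$ and close enough to $\partial B^4$ that, after an ambient isotopy of $\Sigma$ rel $\partial B^4$, the intersection $(\Sigma, \mathcal{A}) \cap (B^4 \setminus B')$ is the standard product decorated cylinder on $\mathbb{L}$. Applying $\sim'$ at $B'$ then replaces $\mathcal{F}$ by $\mathcal{F}''$ with the single input ball $B'$ and input $v'' = F_{\Sigma \cap B', \mathcal{A} \cap B'}(\otimes_i v_i)$; functoriality of cobordism maps, together with the fact that the cylindrical part contributes the identity, yields $v'' = F_{\Sigma, \mathcal{A}}(\otimes_i v_i) = \Phi([\mathcal{F}])$, so $[\mathcal{F}] = [\mathcal{F}''] = \Psi(\Phi([\mathcal{F}]))$.

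Grading preservation drops out of the grading formulas: the product cylinder satisfies $\chi(\Sigma_\mathbf{w}) = \chi(\Sigma_\mathbf{z})$, so Alexander gradings match, while its Maslov contribution $\chi(\Sigma_\mathbf{w})$ depends only on $|\mathbf{w}|$ and thus gives a uniform shift compatible with the normalization conventions used in the previous lemma. The step I expect to be most delicate is the ambient isotopy that standardizes $(\Sigma, \mathcal{A}) \cap (B^4 \setminus B')$ to a product: one must simultaneously straighten both the surface near $L$ and the dividing arc pattern. However, since $\Sigma$ is properly embedded and transverse to $\partial B^4$ along $L$, and $\mathcal{A}$ meets $L$ cleanly at prescribed midpoints between basepoints, the standard collar argument for properly embedded surfaces extends to the decorated setting.
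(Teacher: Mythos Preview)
Your proof is correct and follows essentially the same strategy as the paper: both arguments produce mutually inverse maps by showing every filling is equivalent to one with a single input ball equal to the complement of a collar of $\partial B^4$, carrying the product cylinder $(L\times I, C\times I)$ as its decorated surface. The only cosmetic difference is that you define the map $\mathcal{FL}\to\widehat{HFL}$ directly as the evaluation $[\mathcal{F}]\mapsto F_{\Sigma,\mathcal{A}}(\otimes_i v_i)$ and then check well-definedness via the composition law, whereas the paper first passes to the standard representative and reads off its label $v'$; these are of course the same map by functoriality, and your phrasing is arguably a bit cleaner since well-definedness is immediate rather than implicit in the choice of representative.

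One small caution on your grading discussion: for the product cylinder each component of $\Sigma_{\mathbf w}$ is a rectangle, so $\chi(\Sigma_{\mathbf w})=|\mathbf w|$, not zero. Your Alexander check is fine because $\chi(\Sigma_{\mathbf w})=\chi(\Sigma_{\mathbf z})$, but your Maslov remark about a ``uniform shift compatible with the normalization conventions'' is vague; the paper's proof is equally silent on this point, so you are not losing anything relative to it, but if you want a truly grading-preserving statement you should either verify that the shift is absorbed by the conventions in force or state the isomorphism up to an overall Maslov shift of $|\mathbf w|$.
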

\begin{proof}
	For any equivalent class of Floer lasagna fillings [$\mathcal{F}]$, we can always take a particular Floer lasagna filling $\mathcal{F}' =(B',\mathbb{L}',\Sigma',\mathcal{A}',v')$ as a representative, such that $B' = W\backslash nbhd(\partial W)$ where $nbhd(\partial W)$ is a collar neighborhood of $\partial W$, and $(W\backslash B',\Sigma',\mathcal{A}')$ is the identity cobordism with $\Sigma' = L'\times I$, $\mathcal{A}' = C\times I$ for some set of points $C$ on $L$, one between each pair of adjacent basepoints $\w\cup\z$.
	This defines a map
		\begin{align*}
			\Psi:\mathcal{FL}(B^4,\mathbb{L}) &\to \widehat{HFL}(S^3,\mathbb{L}), \\
			\Psi([\mathcal{F}])&=v'.
		\end{align*}	
	It is easy to write down the inverse map $\Phi$ of $\Psi$ such that for any $v\in \widehat{HFL}(S^3,\mathbb{L})$, $\Phi(v) = \left[(B,\mathbb{L},\Sigma,\mathcal{A},v)\right]$ is the particular representative we have picked as before for each equivalence class of Floer lasagna fillings.
\end{proof}

	 \section{Adding 2-handles and the cabled link Floer homology}
	 
	 \label{sec:equivalence}
	 In this section, we study the effect of adding $2$-handles to the Floer lasagna modules. We will see that we can always find some good representative of an equivalence class of Floer lasagna fillings of $(W,\mathbb{L})$, such that one component of the filling surface $\Sigma$ is $L\times I$ with dividing arcs $C\times I$, and each other  connected component is a $2$-disk, with a diameter as the dividing arc. Then it could be represented by an element in the so-called cabled link Floer homology 
	 $\widehat{cHFL}$, which we will define later. The treatment is very close to that in \cite{manolescu2020skein}, with some extra attention paid to the dividing arcs. The main proposition of this section is the following:
	 
	   \begin{proposition}
	   	\label{prop:cabled link homology}
	 	Let $W$ be the $4$-manifold obtained from attaching $2$-handles to $B^4$ along a framed link $K$. Let $\mathbb{L} = (L,\w,\z)$ be a multi-based link in the $3$-manifold $\partial W$ such that $L$ is disjoint from the surgery loci. Then we have a grading-preserving isomorphism: \[\Phi: \widehat{cHFL}(\mathbb{L},K)\to \mathcal{FL}(W,\mathbb{L}).\]
	 \end{proposition}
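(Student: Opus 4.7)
The plan is to construct $\Phi$ on the level of representatives and then set up an inverse by a normalization procedure on Floer lasagna fillings, with the quotient relations in $\widehat{cHFL}(\mathbb{L},K)$ arising exactly as the indeterminacy of normalization. Throughout I will follow the scheme of \cite{manolescu2020skein} for the surface-normalization part, and use the quasi-(de)stabilization framework of \cite{zemke2019link}, Section 4, for the dividing-arc part.

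First I unpack the definition: a generator of $\widehat{cHFL}(\mathbb{L},K)$ is (a class of) a homogeneous element of $\widehat{HFL}$ of $L$ together with a cable pattern on a tubular neighborhood of $K$, decorated with compatible basepoints, modulo the braid group action on the cable strands, pair-of-pants maps that band adjacent parallel strands, and the basepoint moving maps. The map $\Phi$ sends such a generator to the equivalence class of the Floer lasagna filling $\mathcal{F}$ consisting of a single input $4$-ball $B$ containing a standard neighborhood of $L$ and the cable, whose filling surface inside $W\setminus B$ is $L\times I$ glued to cocore disks of the $2$-handles attached along the cable strands, with dividing arcs equal to $C\times I$ on $L\times I$ and a diameter on each capping disk. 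Call such an $\mathcal{F}$ a \emph{model filling}. Well-definedness at the level of representatives amounts to checking that each defining relation of $\widehat{cHFL}(\mathbb{L},K)$ yields an equivalent model filling: braid moves on cable strands are realized by isotopies of the filling surface inside a slightly enlarged input ball, pair-of-pants maps correspond to merging two parallel capping disks into a single disk-with-a-band via the cobordism map computed in \cite{zemke2019link}, and basepoint moving maps correspond to sliding the dividing arcs within the input ball. In all three cases the move happens inside an enlarged input ball, so by Definition \ref{def:equiv fillings} the two fillings are equivalent.

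Next I construct the inverse by showing every equivalence class of Floer lasagna fillings contains a model representative, in two stages. \emph{Stage A (almost model).} Given $\mathcal{F}$, enlarge input balls and apply the Manolescu--Neithalath type argument to isotope the filling surface $\Sigma$ until each connected component of $\Sigma$ is either the vertical cylinder $L\times I$ over $L$ or a cocore disk of one of the $2$-handles; this produces an almost model filling. Every surface simplification step is realized inside an enlarged input ball and so preserves the equivalence class; the residual input data $v\in\widehat{HFL}$ changes by the cobordism map associated to the chosen isotopy. \emph{Stage B (model).} Within an almost model filling, the dividing arcs $\mathcal{A}$ on $L\times I$ and on each capping disk can be placed in standard position by a sequence of arc slides across the surface. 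By \cite[Section 4]{zemke2019link}, each such slide is realized by a quasi-(de)stabilization map on the input label; again these moves take place inside an enlarged input ball and hence preserve the equivalence class. The resulting model filling then defines an element of the group generated by cabled states, and the inverse map $\Phi^{-1}$ sends $[\mathcal{F}]$ to its class in $\widehat{cHFL}(\mathbb{L},K)$.

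The final step is to verify that $\Phi^{-1}$ is well-defined, i.e.\ that different choices during Stages A and B yield elements differing by the relations in $\widehat{cHFL}(\mathbb{L},K)$. Two surface normalizations of the same $\Sigma$ differ by an isotopy of capping disks rel boundary of the handles, which at the level of their intersections with $\partial(\text{enlarged input ball})$ is a braid in the cable strands together with pair-of-pants moves; by construction these are precisely the surface-level relations imposed on $\widehat{cHFL}$. Two dividing-arc normalizations differ by a composition of quasi-stabilization moves whose net effect is a basepoint moving action, which is the third relation. Grading preservation is immediate from the computation in the lemma just above, since the Maslov and Alexander contributions of the surface $\Sigma$ in a model filling match the defining grading shifts of $\widehat{cHFL}$; and quasi-(de)stabilization maps preserve the combined gradings by the formulas in \cite{zemke2019grading}. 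The main obstacle I anticipate is Stage B: one must check that every possible sequence of dividing-arc slides required to reach standard position can be carried out entirely inside enlarged input balls, and that the composition of the associated quasi-stabilization maps matches the basepoint moving action in $\widehat{cHFL}$ so that the inverse is unambiguous.
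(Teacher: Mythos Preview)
Your proposal is correct and follows essentially the same route as the paper: construct $\Phi$ via model fillings, build the inverse by a two-stage normalization (first the filling surface, then the dividing arcs), and match the normalization ambiguities to the three relations defining $\widehat{cHFL}$. The obstacle you flag in Stage~B is precisely the technical core, and the paper resolves it along the lines you suggest via the quasi-(de)stabilization calculus (Lemmas~\ref{lem:concatenation} and~\ref{lem:basepoint}); the one point you do not anticipate is that the braid and pair-of-pants cobordisms produced by the surface-normalization ambiguity come equipped with a priori unknown decorations, and these must be reduced (using the vanishing lemma for closed-disk regions and basepoint moving) to the specific decorated cobordisms of Section~\ref{section:cabled} before the comparison with the relations in $\widehat{cHFL}$ goes through.
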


	 We begin by defining the cabled link Floer homology  $\widehat{cHFL}(\mathbb{L},K)$.
		 \subsection{Cabled link Floer homology} 
		 \label{section:cabled}
	  Let $K$ be a framed link in $S^3$. Fix a parametrization $f_i:S^1\times D^2 \to nbhd(K_i)$ for each component $K_i$ of $K$, where $S^1 = \mathbb{R}/\mathbb{Z}$. For two $n$-tuples $\mathbf{k}^{+}=\left(k_1^+,...,k_n^+\right)$ and $\mathbf{k}^{-} = \left(k_1^-,...,k_n^-\right)$, define $K(\mathbf{k}^+,\mathbf{k}^-)$ as the link obtained from $K$ by replacing each component $K_i$ with $k_i^+$ many positively oriented and $k_i^-$ many negatively oriented parallel copies along the framing of $K_i$, e.g. for each $i$, picking distinct points $x_{i,1}^-,...,x_{i,k_i^-}^-, x_{i,1}^+,...,x_{i,k_i^+}^+ \in D^2$ which lie on a fixed diameter of $D^2$, and let
	  \begin{equation}
	  	K(\mathbf{k}^+,\mathbf{k}^-) = \bigcup_if_i(S^1\times \{x_{i,1}^+,...,x_{i,k_i^+}^+, x_{i,1}^-,...,x_{i,k_i^-}^-\}).
	  	\label{eq:cables}
	  \end{equation} 
	    
	 Add two basepoints 
	 \begin{equation}
	 	\w_{i,j}^{\pm} = f_i(0,x_{i,j}^{\pm}), \quad \z_{i,j}^{\pm} = f_i(\dfrac{1}{2},x_{i,j}^{\pm})
	 	\label{eq:basepoint}
	 \end{equation}
	 on each component of $K(\mathbf{k}^+,\mathbf{k}^-)$. Let $\w' = \cup_{i,j}\w_{i,j}$ and $\z' = \cup_{i,j}\z_{i,j}$. We denote the link Floer homology of $\left(S^3, L\cup K(\mathbf{k}^+,\mathbf{k}^-),\w\cup \w',\z\cup \z'\right)$ by $\widehat{HFL}(\mathbb{L},K,\mathbf{k}^+,\mathbf{k}^-).$ 
	 
	 There are three maps between $\widehat{HFL}(\mathbb{L},K,\mathbf{k}^+,\mathbf{k}^-)$ that we will consider: An action of some subgroup of the braid group switching parallel components of the same orientation, a pair-of-pants map combining two parallel components of opposite orientations, and a basepoint moving map on each component. We describe them more carefully.
	 
	We start with the braid group action. For each $i$, consider the subgroup $B_{k^-_{i},k^+_{i}} = \phi^{-1}(S_{k^-_{i}}\times S_{k^+_{i}}) \subset B_{k^-_{i}+k^+_{i}}$, where $\phi: B_{k^-_{i}+k^+_{i}}\to S_{k^-_{i}+k^+_{i}}$ is the natural map to the symmetric group $S_{k^-_{i}+k^+_{i}}$, and $S_{k^-_{i}}\times S_{k^+_{i}}$ is viewed as a subgroup of $S_{k^-_{i}+k^+_{i}}$. Then each element $\tau$ of $B_{k^-_{i},k^+_{i}}$ could be viewed as a map  \[ \tau: \bigcup_{j=1,...,k^-_{i}+k^+_{i}} I \to D^2\times I,\] which is an embedding of $(k^-_{i}+k^+_{i})$-many intervals into $D^2\times I$. Multiplying this embedding by $S^1$ and composing with $f_i$ gives a cobordism $f_i\circ\tau: \cup_{j} S^1\times I\to f_i(S^1\times D^2\times I) \subset S^3\times I$ from parallel copies of $K_i$ to itself. Denote the image of $f_i\circ\tau$ by $\Sigma$. Add dividing arcs $\mathcal{A}$ which are the image $f_i\circ\tau (\cup_{j}\{1/4,3/4\}\times I) $, which divides each cylinder into two halves. Call the half containing $\w$ basepoints $\Sigma_{\w}$ and the other half $\Sigma_{\z}$. Take the union of this link cobordism with the identity cobordism over other components of $L \cup K(\mathbf{k}^+,\mathbf{k}^-)$. This cobordism gives a map \[F_{\tau}:\widehat{HFL}(\mathbb{L},K,\mathbf{k}^+,\mathbf{k}^-)\to \widehat{HFL}(\mathbb{L},K,\mathbf{k}^+,\mathbf{k}^-). \]
	 
	 This defines an action of the subgroup $B_{k^-_{i},k^+_{i}}$ for each $i$, by the functoriality of the link cobordism map as proved in \cite{zemke2019link}. See Figure \ref{fig:braid-group-action} for the corresponding cobordism of a single crossing in $B_{k^-_{i},k^+_{i}}$.
 \begin{figure}[h]
 	\[
 	{
 		\fontsize{10pt}{10pt}\selectfont
 		\def\svgwidth{2in}
\begingroup%
  \makeatletter%
  \providecommand\color[2][]{%
    \errmessage{(Inkscape) Color is used for the text in Inkscape, but the package 'color.sty' is not loaded}%
    \renewcommand\color[2][]{}%
  }%
  \providecommand\transparent[1]{%
    \errmessage{(Inkscape) Transparency is used (non-zero) for the text in Inkscape, but the package 'transparent.sty' is not loaded}%
    \renewcommand\transparent[1]{}%
  }%
  \providecommand\rotatebox[2]{#2}%
  \newcommand*\fsize{\dimexpr\f@size pt\relax}%
  \newcommand*\lineheight[1]{\fontsize{\fsize}{#1\fsize}\selectfont}%
  \ifx\svgwidth\undefined%
    \setlength{\unitlength}{191.94933463bp}%
    \ifx\svgscale\undefined%
      \relax%
    \else%
      \setlength{\unitlength}{\unitlength * \real{\svgscale}}%
    \fi%
  \else%
    \setlength{\unitlength}{\svgwidth}%
  \fi%
  \global\let\svgwidth\undefined%
  \global\let\svgscale\undefined%
  \makeatother%
  \begin{picture}(1,0.79726963)%
    \lineheight{1}%
    \setlength\tabcolsep{0pt}%
    \put(0,0){\includegraphics[width=\unitlength,page=1]{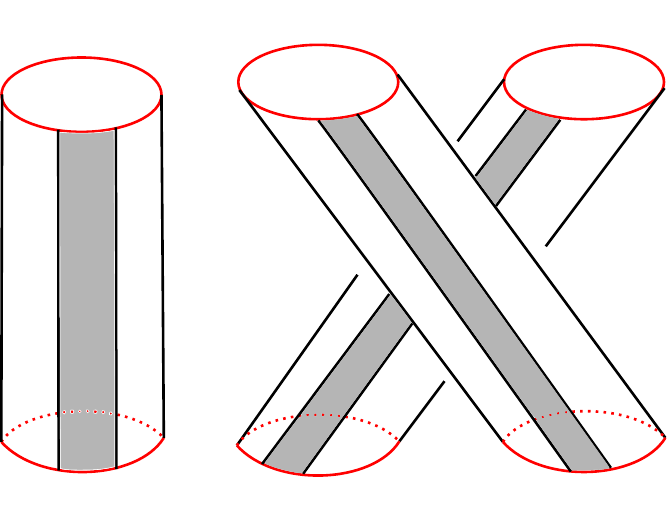}}%
    \put(0.43254158,0.7489537){\makebox(0,0)[lt]{\lineheight{1.25}\smash{\begin{tabular}[t]{l}$K$\end{tabular}}}}%
    \put(0.85986781,0.74443091){\makebox(0,0)[lt]{\lineheight{1.25}\smash{\begin{tabular}[t]{l}$K$\end{tabular}}}}%
    \put(0.09481728,0.72457372){\makebox(0,0)[lt]{\lineheight{1.25}\smash{\begin{tabular}[t]{l}$K$\end{tabular}}}}%
    \put(0.41423358,0){\makebox(0,0)[lt]{\lineheight{1.25}\smash{\begin{tabular}[t]{l}$K$\end{tabular}}}}%
    \put(0.85730663,0.00597582){\makebox(0,0)[lt]{\lineheight{1.25}\smash{\begin{tabular}[t]{l}$K$\end{tabular}}}}%
    \put(0.11103759,0.00063162){\makebox(0,0)[lt]{\lineheight{1.25}\smash{\begin{tabular}[t]{l}$K$\end{tabular}}}}%
  \end{picture}%
\endgroup%

 	}
 	\]
 	\caption{Decorated link cobordism for $\tau_{i,i+1} \in B_n$ }
 	\label{fig:braid-group-action}
 \end{figure}

	 Now we look at the pair-of-pants map. Two oppositely oriented parallel copies of $K_i$ bounds a cylinder $P$ in $S^3$. Isotope $P$ into $S^3\times I$ such that it intersects with $S^3\times \{0\}$ at a trivial knot $U$ in $S^3\times \{0\}$. Remove the disk in $P$ bounded by $U$ gives a cobordism from $U$ to two oppositely oriented parallel copies of $K_i$, which is a pair-of-pants. Give the following dividing arc on this pair-of-pants as in Figure \ref{fig:pair-of-pants} . In particular, $\mathbb{U} = (U,\w',\z')$ has exactly two basepoints on it. Together with the identity cobordism over other components, it gives a map \[F_{P}:\widehat{HFL}(\mathbb{L}\cup \mathbb{U},K,\mathbf{k}^+,\mathbf{k}^-)\to \widehat{HFL}(\mathbb{L},K,\mathbf{k}^++e_i,\mathbf{k}^-+e_i),\]
	 where $e_i $ is the $n$-tuple with $1$ in the $i$th slot and $0$ elsewhere.
  \begin{figure}[h]
 	\[
 	{
 		\fontsize{10pt}{10pt}\selectfont
 		\def\svgwidth{1.25in}
\begingroup%
  \makeatletter%
  \providecommand\color[2][]{%
    \errmessage{(Inkscape) Color is used for the text in Inkscape, but the package 'color.sty' is not loaded}%
    \renewcommand\color[2][]{}%
  }%
  \providecommand\transparent[1]{%
    \errmessage{(Inkscape) Transparency is used (non-zero) for the text in Inkscape, but the package 'transparent.sty' is not loaded}%
    \renewcommand\transparent[1]{}%
  }%
  \providecommand\rotatebox[2]{#2}%
  \newcommand*\fsize{\dimexpr\f@size pt\relax}%
  \newcommand*\lineheight[1]{\fontsize{\fsize}{#1\fsize}\selectfont}%
  \ifx\svgwidth\undefined%
    \setlength{\unitlength}{121.48470193bp}%
    \ifx\svgscale\undefined%
      \relax%
    \else%
      \setlength{\unitlength}{\unitlength * \real{\svgscale}}%
    \fi%
  \else%
    \setlength{\unitlength}{\svgwidth}%
  \fi%
  \global\let\svgwidth\undefined%
  \global\let\svgscale\undefined%
  \makeatother%
  \begin{picture}(1,0.95166191)%
    \lineheight{1}%
    \setlength\tabcolsep{0pt}%
    \put(0,0){\includegraphics[width=\unitlength,page=1]{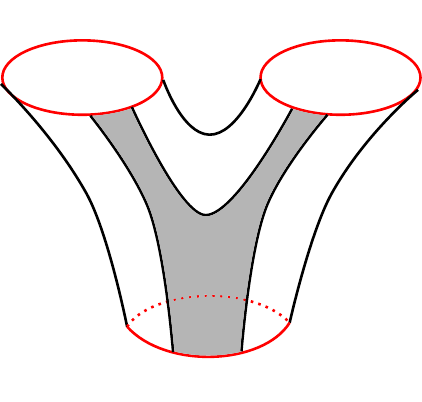}}%
    \put(0.15655535,0.87532134){\makebox(0,0)[lt]{\lineheight{1.25}\smash{\begin{tabular}[t]{l}$K$\end{tabular}}}}%
    \put(0.78385013,0.87392113){\makebox(0,0)[lt]{\lineheight{1.25}\smash{\begin{tabular}[t]{l}$K$\end{tabular}}}}%
    \put(0.4477994,0.00158936){\makebox(0,0)[lt]{\lineheight{1.25}\smash{\begin{tabular}[t]{l}$U$\end{tabular}}}}%
  \end{picture}%
\endgroup%

 	}
 	\]
 	\caption{Pair-of-pants cobordism}
 	\label{fig:pair-of-pants}
 \end{figure}

	 The pair-of-pants map is the composition of a quasi-stabilization map with a band surgery map. See \cite[Section 4, 6]{zemke2019link} for details of the definition.
	 
	 Note that \[\widehat{HFL}(\mathbb{L}\cup \mathbb{U},K,\mathbf{k}^+,\mathbf{k}^-)\cong \widehat{HFL}(\mathbb{L},K,\mathbf{k}^+,\mathbf{k}^-)\otimes V,\] where $V=\mathbb{F}_2\langle T,B\rangle$ is a $2$-dimensional $\mathbb{F}_2$-vector space generated by two elements $T,B$, with $M(T)=0, M(B)= -1$, and $A(T)=A(B)=0$. The death cobordism $D$ capping the unknot $\mathbb{U}$ $D: \widehat{HFL}(L\cup U,K,\mathbf{k}^+,\mathbf{k}^-) \to \widehat{HFL}(L,K,\mathbf{k}^+,\mathbf{k}^-)$ acts by 
	 \begin{equation}
	 	 D(v\otimes B)=v,\quad \quad D(v\otimes T)=0.
	 	 \label{eq:death cobordism}
	 \end{equation}
 See Figure \ref{fig:death-cobordism} for the decorated link cobordism inducing $D$. 
	   \begin{figure}[h]
	 	\[
	 	{
	 		\fontsize{10pt}{10pt}\selectfont
	 		\def\svgwidth{1in}
\begingroup%
  \makeatletter%
  \providecommand\color[2][]{%
    \errmessage{(Inkscape) Color is used for the text in Inkscape, but the package 'color.sty' is not loaded}%
    \renewcommand\color[2][]{}%
  }%
  \providecommand\transparent[1]{%
    \errmessage{(Inkscape) Transparency is used (non-zero) for the text in Inkscape, but the package 'transparent.sty' is not loaded}%
    \renewcommand\transparent[1]{}%
  }%
  \providecommand\rotatebox[2]{#2}%
  \newcommand*\fsize{\dimexpr\f@size pt\relax}%
  \newcommand*\lineheight[1]{\fontsize{\fsize}{#1\fsize}\selectfont}%
  \ifx\svgwidth\undefined%
    \setlength{\unitlength}{113.15263096bp}%
    \ifx\svgscale\undefined%
      \relax%
    \else%
      \setlength{\unitlength}{\unitlength * \real{\svgscale}}%
    \fi%
  \else%
    \setlength{\unitlength}{\svgwidth}%
  \fi%
  \global\let\svgwidth\undefined%
  \global\let\svgscale\undefined%
  \makeatother%
  \begin{picture}(1,1.31201997)%
    \lineheight{1}%
    \setlength\tabcolsep{0pt}%
    \put(0,0){\includegraphics[width=\unitlength,page=1]{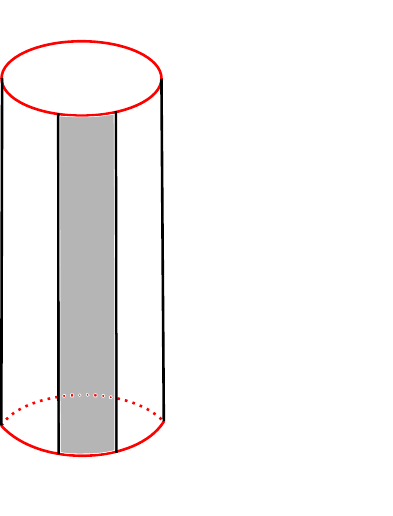}}%
    \put(0.16084562,1.230058){\makebox(0,0)[lt]{\lineheight{1.25}\smash{\begin{tabular}[t]{l}$K$\end{tabular}}}}%
    \put(0.18836135,0.00198028){\makebox(0,0)[lt]{\lineheight{1.25}\smash{\begin{tabular}[t]{l}$K$\end{tabular}}}}%
    \put(0,0){\includegraphics[width=\unitlength,page=2]{death.pdf}}%
    \put(0.74017593,0.00170639){\makebox(0,0)[lt]{\lineheight{1.25}\smash{\begin{tabular}[t]{l}$U$\end{tabular}}}}%
    \put(0,0){\includegraphics[width=\unitlength,page=3]{death.pdf}}%
  \end{picture}%
\endgroup%

	 	}
	 	\]
	 	\caption{Death cobordism}
	 	\label{fig:death-cobordism}
	 \end{figure}
	 
	 Finally there is a basepoint moving map, which is induced by the decorated link cobordism as in Figure \ref{fig:basepoint-moving} on the $j$th parallel copy of $K_i$, $j=1,...k^-_{i}+k^+_{i},$ and the identity cobordism on the rest components. We denote it by 
	 \[F_{m_{i,j}}:\widehat{HFL}(\mathbb{L},K,\mathbf{k}^+,\mathbf{k}^-)\to \widehat{HFL}(\mathbb{L},K,\mathbf{k}^+,\mathbf{k}^-).\] 
	 
	 See \cite{zemke2017quasistabilization} and \cite[Section 4]{zemke2019link} for the definition of the basepoint moving maps.

     	   \begin{figure}[h]
     	\[
     	{
     		\fontsize{10pt}{10pt}\selectfont
     		\def\svgwidth{0.5in}
\begingroup%
  \makeatletter%
  \providecommand\color[2][]{%
    \errmessage{(Inkscape) Color is used for the text in Inkscape, but the package 'color.sty' is not loaded}%
    \renewcommand\color[2][]{}%
  }%
  \providecommand\transparent[1]{%
    \errmessage{(Inkscape) Transparency is used (non-zero) for the text in Inkscape, but the package 'transparent.sty' is not loaded}%
    \renewcommand\transparent[1]{}%
  }%
  \providecommand\rotatebox[2]{#2}%
  \newcommand*\fsize{\dimexpr\f@size pt\relax}%
  \newcommand*\lineheight[1]{\fontsize{\fsize}{#1\fsize}\selectfont}%
  \ifx\svgwidth\undefined%
    \setlength{\unitlength}{47.5528747bp}%
    \ifx\svgscale\undefined%
      \relax%
    \else%
      \setlength{\unitlength}{\unitlength * \real{\svgscale}}%
    \fi%
  \else%
    \setlength{\unitlength}{\svgwidth}%
  \fi%
  \global\let\svgwidth\undefined%
  \global\let\svgscale\undefined%
  \makeatother%
  \begin{picture}(1,3.11725456)%
    \lineheight{1}%
    \setlength\tabcolsep{0pt}%
    \put(0,0){\includegraphics[width=\unitlength,page=1]{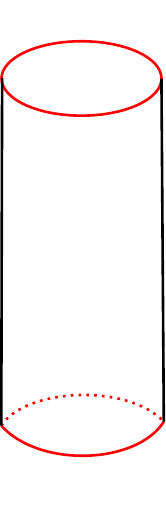}}%
    \put(0.38273403,2.92222509){\makebox(0,0)[lt]{\lineheight{1.25}\smash{\begin{tabular}[t]{l}$K$\end{tabular}}}}%
    \put(0.44820805,0){\makebox(0,0)[lt]{\lineheight{1.25}\smash{\begin{tabular}[t]{l}$K$\end{tabular}}}}%
    \put(0,0){\includegraphics[width=\unitlength,page=2]{basepoint.pdf}}%
  \end{picture}%
\endgroup%

     	}
     	\]
     	\caption{Basepoint moving cobordism}
     	\label{fig:basepoint-moving}
     \end{figure}

	 Now we give the definition of the cabled link Floer homology, which is the $\mathbb{F}_2$-vector space generated by $\widehat{HFL}(\mathbb{L},K,\mathbf{k}^+,\mathbf{k}^-)$ for all possible $n$-tuples $\mathbf{k}^+,\mathbf{k}^-$, quotienting out the relations given by maps $F_{\tau},F_P,F_{m_{i,j}}$.

	 \begin{definition}
	 	\label{def:cabled}
	 	 The \textbf{cabled link Floer homology} of $(\mathbb{L},K)$  is defined as \[\widehat{cHFL}(\mathbb{L},K) = \bigoplus_{\mathbf{k}^+,\mathbf{k}^-\in\mathbb{N}^n}\widehat{HFL}(\mathbb{L},K,\mathbf{k}^+,\mathbf{k}^-)\left[|\mathbf{k}^+|+|\mathbf{k}^-|\right] /\sim,\]	
 	 where $\left[l\right]$ shifts the Maslov grading up by $l$, $|\mathbf{k}^{+}|, |\mathbf{k}^{-}|$ are the sum of entries of $\mathbf{k}^{+},\mathbf{k}^{-}$ respectively, and $\sim$ is the equivalence relation which is the transitive and linear closure of the following relations:
 	 \begin{itemize}
 	 	\item $v\sim F_{\tau}(v)$;
 	 	\item $v\sim F_P(v\otimes B),0\sim F_P(v\otimes T)$;
 	 	\item $v\sim F_{m_{i,j}}(v).$
 	 \end{itemize}
 \end{definition} 
\begin{remark}
	Note the equivalence relation preserves both the Maslov grading and the Alexander grading, so they descend to gradings on $\widehat{cHFL}(\mathbb{L},K)$. For the grading shift of the pair-of-pants cobordism, see \cite{zemke2019grading}.
\end{remark}

\subsection{Almost model and model Floer lasagna fillings}

In this subsection, we give the definition of almost model and model Floer lasagna fillings, which are some particular representatives in each equivalent class of Floer lasagna fillings, which we will use later in the proof of Proposition \ref{prop:cabled link homology}. This is motivated by the proof of the similar statement as Proposition \ref{prop:cabled link homology} in the skein lasagna module in \cite{manolescu2020skein}. 

\begin{definition}
	\label{def:model}
	 Suppose $W$ is a $4$-manifold obtained from attaching $2$-handles to $B^4$ along a framed link $K$, $\mathbb{L}=(L,\w,\z)$ is a multi-based link in $\partial W$ away from the surgery loci, and $v\in \widehat{HFL}(\mathbb{L},K,\mathbf{k}^+,\mathbf{k}^-)$. A \textbf{model Floer lasagna filling} $\mathcal{F} =(B,\mathbb{L}'\cup \mathbb{K}'(\bk^+,\bk^-),\Sigma,\mathcal{A},v)$  of $(W,\mathbb{L})$ is a Floer lasagna filling such that:
	 \begin{itemize}
	 	\label {def:phi}
	 	\item 	Parameterize each $2$-handle by $f_i: D^2\times D^2\to W$, such that the framing of $K_i$ is given by $f_i(\partial D^2 \times \{0\})$. Let \[ W' = W \backslash (\bigcup _if_i(B_{\epsilon}(0)\times D^2)),\] where $B_{\epsilon}(0)$ is a small neighborhood of $0\in D^2$, i.e. we delete a small neighborhood of each cocore of the $2$-handle. Let $B = W'\backslash nbhd(\partial W')$, where $nbhd(\partial W')\cong (0,1) \times \partial W'$ is a collar neighborhood of $\partial W'$. By construction, $B$ is a $4$-ball.
	 	\item $L'= \{1\}\times L\subset \partial B\subset (0,1)\times \partial W'$ is a copy of $L$ and 
	 	\[K'(\mathbf{k}^+,\mathbf{k}^-) = \bigcup_if_i(\partial B_{\epsilon}(0)\times \{x_{i,1}^+,...,x_{i,k_i^+}^+, x_{i,1}^-,...,x_{i,k_i^-}^-\}).\]
	 	are copies of $K(\mathbf{k}^+,\mathbf{k}^-)$ which are on the boundary $\partial B_{\epsilon}(0)\times D^2 \subset \partial B$ instead of $\partial D^2\times D^2$. See Equation \ref{eq:cables}.
	 	\item $\w'$, $\z'$ consist of basepoints on $L'$ which correspond to $\w,\z$ on $L$, and   basepoints $\w_{i,j}^{\pm},\z_{i,j}^{\pm}$ on $K'(\bk^+,\bk^-)$, two on each components. See Equation \ref{eq:basepoint}. $\mathbb{L}'$ and $\mathbb{K}'(\bk^+,\bk^-)$ are the corresponding multi-based links.
	 	\item $\Sigma$ is the disjoint union of the following surface: There is a union of cylinders $L\times I\subset nbhd(\partial W')\subset W\backslash B$ connecting $L\subset \partial W$ and $L'\subset \partial B$. For each parallel copy $f_i(\partial B_{\epsilon}(0)\times x^{\pm}_{i,j})$ of $K_i$, there is a disk $f_i(B_{\epsilon(0)}\times x^{\pm}_{i,j})$ capping it, whose orientation is induced by the boundary orientation. Let \[\Sigma = (L\times I) \cup \bigcup_{i,j}f_i(B_{\epsilon(0)}\times x^{\pm}_{i,j})\] 
	 	\item The dividing arcs $\mathcal{A}$ on $L\times I$ is given by $C\times I$, where $C$ is a set of points, one in each connected component of $L\backslash(\w\cup\z)$. On the disk $f_i(B_{\epsilon(0)}\times x^{\pm}_{i,j})$, the dividing arc is given by $f_i([-\epsilon,\epsilon]\times \{0\}\times x^{\pm}_{i,j})$. We partition $\Sigma \backslash \mathcal{A} = \Sigma_{\w}\sqcup\Sigma_{\z}$ depending on whether it contains $\w'$-basepoints or $\z'$-basepoints.
	 	\item The input $v\in \widehat{HFL}(\mathbb{L},K,\mathbf{k}^+,\mathbf{k}^-)$ could be viewed as an element of $ \widehat{HFL}(L'\cup K'(\bk^+,\bk^-))$ naturally.
	 \end{itemize}
\end{definition}
     	   \begin{figure}[h]
	\[
	{
		\fontsize{10pt}{10pt}\selectfont
		\def\svgwidth{2in}
\begingroup%
  \makeatletter%
  \providecommand\color[2][]{%
    \errmessage{(Inkscape) Color is used for the text in Inkscape, but the package 'color.sty' is not loaded}%
    \renewcommand\color[2][]{}%
  }%
  \providecommand\transparent[1]{%
    \errmessage{(Inkscape) Transparency is used (non-zero) for the text in Inkscape, but the package 'transparent.sty' is not loaded}%
    \renewcommand\transparent[1]{}%
  }%
  \providecommand\rotatebox[2]{#2}%
  \newcommand*\fsize{\dimexpr\f@size pt\relax}%
  \newcommand*\lineheight[1]{\fontsize{\fsize}{#1\fsize}\selectfont}%
  \ifx\svgwidth\undefined%
    \setlength{\unitlength}{199.62495748bp}%
    \ifx\svgscale\undefined%
      \relax%
    \else%
      \setlength{\unitlength}{\unitlength * \real{\svgscale}}%
    \fi%
  \else%
    \setlength{\unitlength}{\svgwidth}%
  \fi%
  \global\let\svgwidth\undefined%
  \global\let\svgscale\undefined%
  \makeatother%
  \begin{picture}(1,1.22659225)%
    \lineheight{1}%
    \setlength\tabcolsep{0pt}%
    \put(0,0){\includegraphics[width=\unitlength,page=1]{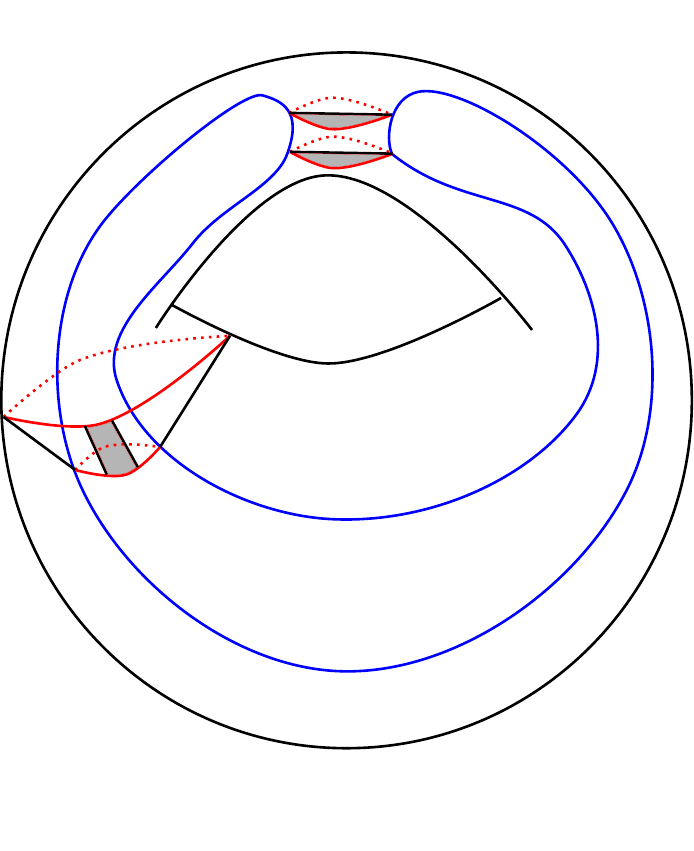}}%
    \put(0.52274532,0.34635236){\makebox(0,0)[lt]{\lineheight{1.25}\smash{\begin{tabular}[t]{l}$B$\end{tabular}}}}%
    \put(0.34807177,0.75269822){\color[rgb]{1,0,0}\makebox(0,0)[lt]{\lineheight{1.25}\smash{\begin{tabular}[t]{l}$L$\end{tabular}}}}%
    \put(0.15920421,0.47757145){\color[rgb]{1,0,0}\makebox(0,0)[lt]{\lineheight{1.25}\smash{\begin{tabular}[t]{l}$L'$\end{tabular}}}}%
    \put(0.46452084,1.17804361){\color[rgb]{1,0,0}\makebox(0,0)[lt]{\lineheight{1.25}\smash{\begin{tabular}[t]{l}$K(k_+,k_-)$\end{tabular}}}}%
    \put(0.3,0){\makebox(0,0)[lt]{\lineheight{1.25}\smash{\begin{tabular}[t]{l}$W$\end{tabular}}}}%
  \end{picture}%
\endgroup%

	}
	\]
	\caption{Diagram of a model Floer lasagna filling}
	\label{fig:cable-floer-homology}
\end{figure}

See Figure \ref{fig:cable-floer-homology} for an example of a model Floer lasagna filling.
\begin{definition}
	\label{def:nearly-model}
	If we loose the restriction on the dividing arcs on disks $f_i(B_{\epsilon}(0)\times\x_{i,j}^{\pm})$, and hence the number of basepoints on $K'(\mathbf{k}^+,\mathbf{k}^-)$ of a model Floer lasagna filling, we call such a Floer lasagna filling as an \textbf{almost model Floer lasagna filling}.
\end{definition}

In our proof of Proposition \ref{prop:cabled link homology}, we will associate a model Floer lasagna filling for each element $v\in \widehat{HFL}(\mathbb{L},K,\mathbf{k}^+,\mathbf{k}^-)$, and proves this gives a well-defined map from the cabled link Floer homology $\widehat{cHFL}(\mathbb{L},K)$ to the Floer lasagna module $\mathcal{FL}(W,\mathbb{L})$. 
To show it is a bijection, we will define the inverse map, which amounts to find a model Floer lasagna filling in each equivalent class of Floer lasagna fillings.  Roughly speaking, this is given by taking $B$ as in the definition of the model Floer lasagna filling, and use the equivalence relation on Floer lasagna fillings. The problem is that we don't have control of the dividing arcs on disks of the form $f_i(B_{\epsilon}(0)\times\x_{i,j}^{\pm})$, so we will get almost model Floer lasagna fillings, instead of model ones. In the rest of the subsection, we will present how to obtain represent an almost model Floer lasagna filling as a linear combination of model Floer lasagna fillings, viewed as elements in $\mathcal{FL}(W,\mathbb{L})$.

The main tool is quasi-(de)stabilization map on link Floer homology. It deals with what happens to link Floer homology when we add/delete basepoints on the link.
It first appeared in \cite{manolescu2010heegaard}, and was further illustrated in \cite{zemke2017quasistabilization},\cite{zemke2019link}. The main reference of the following discussion is \cite[Section 4]{zemke2019link}.

\begin{definition}
	\label{def:stab}
	Let $\mathbb{L}=(L,\w,\z)$ be a multi-based link in $S^3$. Suppose $w',z'$ are two new basepoints of on some component of $L$, such that $w',z'$ lie in the same connected component of $L \backslash (\w\cup \z)$, and $w'$ follows $z'$ with respect to the orientation of $L$. We define the following four cobordism maps 
	\begin{align*}
		&S^+,T^+: \widehat{HFL}(L,\w,\z)\to \widehat{HFL}(L,\w\cup w',\z\cup z'), \\
		&S^-,T^-:\widehat{HFL}(L,\w\cup w',\z\cup z') \to  \widehat{HFL}(L,\w,\z),
	\end{align*}	
	induced by the following decorated link cobordisms $(S^3\times I, L\times I, \mathcal{A}_{i})$ for $i=1,2,3,4$.
	     	   \begin{figure}[h]
		\[
		{
			\fontsize{10pt}{10pt}\selectfont
			\def\svgwidth{6in}
			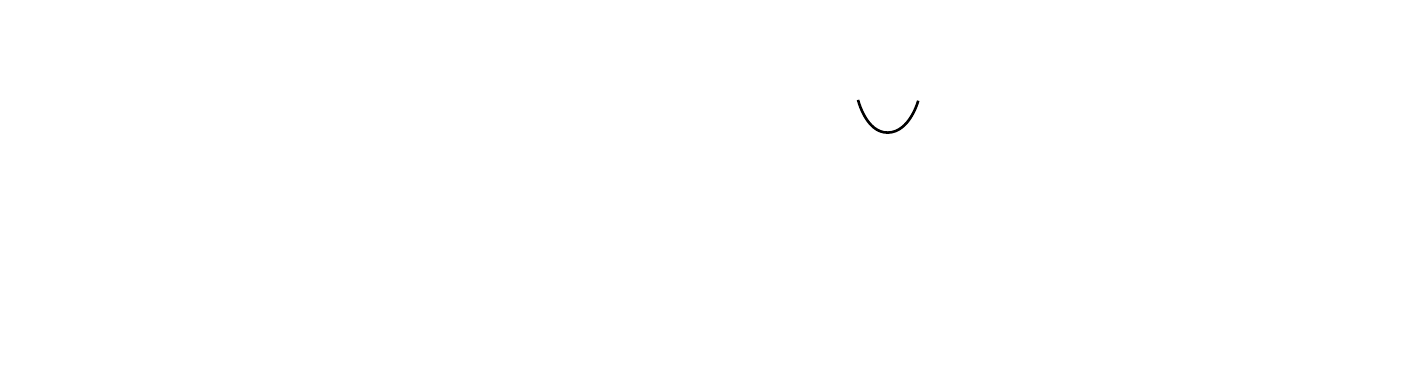
		}
		\]
		\caption{Different quasi-(de)stabilization cobordisms}
		\label{fig:quasi-stabilization}
	\end{figure}

\end{definition}
\begin{lemma}
	\label{lem: destabilization}
	Suppose $(L,\w,\z)$ is a multi-based link in $S^3$ of $m$ components, with $|\w| = |\z| =n$. Let $\w'\subset \w ,\z'\subset \z$ be subsets of the basepoints such that on each component of $L$, there is exactly one basepoint in $\w'$ and exactly one basepoint in $\z'$. Then \[\widehat{HFL}(L,\w,\z) \cong \widehat{HFL}(L,\w',\z')\otimes V^{\otimes (n-m)},\]
	where $V = \mathbb{F}_2\langle\theta^{\w},\xi^{\w} \rangle$ is a bigraded $2$-dimensional vector space over $\mathbb{F}_2$, where 
	\[M(\theta^{\w})=0,\quad \quad M(\xi^{\w})=-1,\quad \quad A(\theta^{\w}) = \dfrac{1}{2}, \quad \quad A(\xi^{\w})=-\dfrac{1}{2}.\] 
\end{lemma}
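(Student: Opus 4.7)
The plan is to prove the isomorphism by induction on $n-m$, adding back one pair of basepoints at a time using the quasi-(de)stabilization maps just defined, and identifying each new tensor factor of $V$ with the pair $(S^+,T^+)$ of quasi-stabilization maps. The base case $n=m$ is immediate, since then $(\w',\z')=(\w,\z)$ and $V^{\otimes 0}=\mathbb{F}_2$. For the inductive step, suppose the result is known whenever the excess number of basepoints is $n-m-1$, and fix a multi-based link $(L,\w,\z)$ with $|\w|=|\z|=n>m$. Since $\w'$ contains only one basepoint on each component of $L$, there must exist a component carrying at least two $\w$-basepoints and hence (by alternation) two $\z$-basepoints. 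On such a component I would choose a pair $w'\in \w\setminus\w'$ and $z'\in \z\setminus \z'$ which are adjacent along $L$ with $w'$ following $z'$, and set $\w_0=\w\setminus\{w'\}$, $\z_0=\z\setminus\{z'\}$. Then $\w'\subset \w_0$ and $\z'\subset \z_0$ still satisfy the one-basepoint-per-component condition.

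Next I would apply the quasi-stabilization maps of Definition \ref{def:stab} in the direction
\[
S^+,\,T^+\colon \widehat{HFL}(L,\w_0,\z_0)\longrightarrow \widehat{HFL}(L,\w_0\cup w',\z_0\cup z')=\widehat{HFL}(L,\w,\z).
\]
The heart of the argument is the statement, essentially due to Manolescu–Ozsv\'ath–Sarkar and extended by Zemke in \cite{manolescu2010heegaard, zemke2017quasistabilization}, that $S^+\oplus T^+$ defines an isomorphism of bigraded $\mathbb{F}_2$-vector spaces
\[
\widehat{HFL}(L,\w_0,\z_0)^{\oplus 2}\xrightarrow{\;\cong\;}\widehat{HFL}(L,\w,\z),
\]
with inverse recovered from the destabilization maps via the relations $T^-\!\circ S^+=\mathrm{id}$, $S^-\!\circ T^+=\mathrm{id}$, and $S^-\!\circ S^+=T^-\!\circ T^+=0$. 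I would then identify the two summands with the basis $\{\theta^\w,\xi^\w\}$ of $V$ by declaring $\theta^\w$ to correspond to the image of $S^+$ and $\xi^\w$ to the image of $T^+$, thereby obtaining the single-pair splitting $\widehat{HFL}(L,\w,\z)\cong \widehat{HFL}(L,\w_0,\z_0)\otimes V$. Applying the inductive hypothesis to $(L,\w_0,\z_0)$ then yields the desired tensor factor $V^{\otimes(n-m)}$.

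To finish, I would verify that the bigrading conventions match. Using the Maslov and Alexander grading change formulas of \cite[Theorem~1.4]{zemke2019grading} applied to the decorated cylinder cobordisms $\mathcal{A}_1$ (for $S^+$) and $\mathcal{A}_3$ (for $T^+$), one finds that $S^+$ preserves the Maslov grading and raises the collapsed Alexander grading by $\tfrac12$, while $T^+$ drops the Maslov grading by $1$ and lowers the collapsed Alexander grading by $\tfrac12$. These shifts agree exactly with the declared gradings
\[
M(\theta^\w)=0,\quad A(\theta^\w)=\tfrac12,\qquad M(\xi^\w)=-1,\quad A(\xi^\w)=-\tfrac12,
\]
so the isomorphism is grading-preserving at each stage of the induction.

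The only nontrivial ingredient is the single-pair splitting via $S^+\oplus T^+$, and this is the step I expect to be the main obstacle to write out carefully: one must check both that the two maps together surject (this is where $T^-\circ S^+=\mathrm{id}=S^-\circ T^+$ is used) and that their images intersect trivially (which follows from $S^-\circ S^+=T^-\circ T^+=0$). Once this is in hand, the induction and the grading bookkeeping are routine, and the choice of the particular adjacent pair $(w',z')$ to remove is immaterial because any two reductions are connected by basepoint-moving maps whose effect is absorbed into the identification with $V$.
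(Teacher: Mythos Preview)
Your proposal is correct and follows essentially the same approach as the paper: the paper's proof simply cites \cite[Proposition~5.3]{zemke2017quasistabilization} (setting the $U,V$ variables to zero) together with induction on $n$, and your argument is a fleshed-out version of exactly this, using the quasi-(de)stabilization maps $S^\pm,T^\pm$ to realize each inductive step. One small imprecision: the relations $T^-S^+=\mathrm{id}=S^-T^+$ and $S^-S^+=T^-T^+=0$ by themselves give injectivity of $S^+\oplus T^+$ with trivially intersecting images, but surjectivity really uses the additional identity $S^+T^-+T^+S^-=\mathrm{id}$ (or equivalently the chain-level splitting in Zemke's Proposition~5.3).
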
	
\begin{proof}
	It follows from \cite[Proposition 5.3]{zemke2017quasistabilization} by setting all the $U$ and $V$ variables equal to $0$, and  induction on $n$. Note the difference in the Maslov grading conventions 
\end{proof}

In this formalism, we can write down the maps $S^{\pm},T^{\pm}$ explicitly as follows:
\begin{equation}
	\label{eq:stab}
	\begin{split}
		&S^+(v) = v\otimes \theta^{\w}, \quad\quad S^-({v\otimes \theta^{\w}})=0,\quad\quad S^-({v\otimes \xi^{\w}})=v \\
		&T^{+}(v) = v \otimes \xi^{\w},\quad \quad T^-({v\otimes \theta^{\w}})=v,\quad\quad T^-({v\otimes \xi^{\w}})=0, 
	\end{split}
\end{equation}
for any $v\in \widehat{HFL}(L,\w,\z)$.

From these descriptions, it is easy to see the following lemmas:
\begin{lemma}
	\label{lem:extension}
	Let $(L,\w,\z)$ and $(L,\w',\z')$ be as in Lemma \ref{lem: destabilization}. For any element $v\in \widehat{HFL}(L,\w',\z')$ and $u\in V^{\otimes (n-m)}$, there exists a decorated link cobordism $(S^3\times I, K\times I, \mathcal{A})$, such that 
	\[F(v) = v\otimes u,  \]
	where $F$ is the map induced by the decorated link cobordism on link Floer homology.	
\end{lemma}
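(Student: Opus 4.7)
The plan is to construct the required decorated link cobordism by stacking copies of the quasi-stabilization cobordisms from Definition \ref{def:stab}. I first reduce to the case when $u$ is a simple tensor of the basis elements $\theta^{\w}, \xi^{\w}$, writing $u = u_1 \otimes \cdots \otimes u_{n-m}$ with each $u_k \in \{\theta^{\w}, \xi^{\w}\}$; the general statement then follows by $\mathbb{F}_2$-linearity of induced cobordism maps, interpreting $F$ as the linear combination of the cobordism maps associated to the simple-tensor cobordisms.

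For such a simple tensor $u$, I would build the cobordism inductively on the $n - m$ pairs of basepoints in $(\w \setminus \w') \times (\z \setminus \z')$ that need to be introduced. After choosing any ordering of these pairs, at the $k$-th step I insert the $k$-th pair via the $S^+$-cobordism of Definition \ref{def:stab} if $u_k = \theta^{\w}$, and via the $T^+$-cobordism if $u_k = \xi^{\w}$. Stacking these $n-m$ elementary decorated cobordisms on top of one another produces a single decorated link cobordism $(S^3 \times I, L \times I, \mathcal{A})$ whose induced map on $\widehat{HFL}$ equals the composition of the corresponding maps in $\{S^+, T^+\}$, by the functoriality of link cobordism maps established in \cite{zemke2019link}.

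It remains to verify the formula $F(v) = v \otimes u$. This is immediate from the explicit formulas \eqref{eq:stab}: under the identification of Lemma \ref{lem: destabilization}, the map $S^+$ tensors a factor of $\theta^{\w}$ on the right and $T^+$ tensors a factor of $\xi^{\w}$, so after all $n-m$ steps we obtain $v \otimes u_1 \otimes \cdots \otimes u_{n-m} = v \otimes u$. The only delicate point, which is mostly bookkeeping rather than a real obstacle, is to ensure that each inserted pair $(w_k, z_k)$ lands in the connected component of $L \setminus (\w' \cup \z')$ corresponding to the intended tensor slot under Lemma \ref{lem: destabilization}, and is placed in the correct relative cyclic order required by Definition \ref{def:stab}.
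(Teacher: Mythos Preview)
Your proof is correct and follows essentially the same approach as the paper: concatenate quasi-stabilization cobordisms, choosing $S^+$ or $T^+$ at each step according to whether the tensor factor is $\theta^{\w}$ or $\xi^{\w}$, and appeal to Equation~\eqref{eq:stab}. Your write-up is in fact a bit more careful than the paper's (which contains a typo, writing $\mathcal{A}_2$ where $\mathcal{A}_3$ is meant), and your remark about reducing to simple tensors is appropriate since the lemma is only ever applied to such $u$ in the sequel.
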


\begin{proof}
	We concatenate $(S^3\times I,L\times I,\mathcal{A}_1)$ and $(S^3\times I,L\times I,\mathcal{A}_2)$, one for each pair of extra basepoints in $(\w\backslash\w')\cup (\z\backslash\z')$, depending on whether the corresponding component of $V^{\otimes (n-m)}$ is $\theta^{\w}$ or $\xi^{\w}$. The result then follows from Equation (\ref{eq:stab}).
\end{proof}

\begin{lemma}
	\label{lem:vanishing}
	If $(S^3\times I,L\times I,\mathcal{A})$ is a decorated link cobordism from $(L,\w,\z)$ to itseld, such that there exists one connected component of either the subsurfaces $\Sigma_{\mathbf{w}}$ or $\Sigma_{\mathbf{z}}$ which is a closed disk not intersecting the boundary $L\times \{0,1\}$, then the induced map $F$ on link Floer homology is $0$.
\end{lemma}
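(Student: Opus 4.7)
The plan is to isolate the closed disk in a small 4-ball and factor the cobordism map $F$ through a birth-and-death of an auxiliary unknot, whose composition is forced to vanish by the death cobordism formula \eqref{eq:death cobordism}. Assume without loss of generality that $D_0\subset\Sigma_{\mathbf w}$; the case $D_0\subset\Sigma_{\mathbf z}$ is symmetric.

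First I would choose a smoothly embedded closed 4-ball $B^4\subset\operatorname{int}(S^3\times I)$ containing $D_0$ and disjoint from $L\times\{0,1\}$, intersecting $\Sigma = L\times I$ transversely in a slightly enlarged disk $D'\supset D_0$ bounded by an unknot $U=\partial D'\subset\partial B^4\cong S^3$. Arrange the dividing set so that $\mathcal A\cap D'$ consists of the closed curve $\partial D_0$ (dividing $D'$ into the inner disk $D_0\subset\Sigma_{\mathbf w}$ and an outer $\Sigma_{\mathbf z}$-annulus) together with auxiliary arcs crossing the annulus that induce two basepoints $w',z'$ on $U$ making $(U,w',z')$ a valid multi-based unknot.

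By the functoriality of decorated link cobordism maps \cite[Section 12]{zemke2019link}, the map $F$ then factors as
\[
F \;=\; F_{\mathrm{cap}}\circ F_{\mathrm{comp}},
\]
where $F_{\mathrm{comp}}\colon\widehat{HFL}(L,\mathbf w,\mathbf z)\to\widehat{HFL}(L\sqcup U,\mathbf w\cup w',\mathbf z\cup z')$ is induced by $(S^3\times I\setminus B^4,\Sigma\setminus D',\mathcal A\setminus(D'\cap\mathcal A))$ and $F_{\mathrm{cap}}$ is induced by the decorated disk $(B^4,D',\mathcal A\cap D')$. Under the K\"unneth-type splitting $\widehat{HFL}(L\sqcup U,\mathbf w\cup w',\mathbf z\cup z')\cong\widehat{HFL}(L,\mathbf w,\mathbf z)\otimes V$ (cf.\ Lemma \ref{lem: destabilization}), I would identify $F_{\mathrm{cap}}$ with the death cobordism of Equation \eqref{eq:death cobordism}: the closed $\Sigma_{\mathbf w}$-disk $D_0$ inside $D'$ places the cap precisely in the $T$-killing configuration of Figure \ref{fig:death-cobordism}, so $F_{\mathrm{cap}}(v\otimes T)=0$ and $F_{\mathrm{cap}}(v\otimes B)=v$. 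Symmetrically, $F_{\mathrm{comp}}$ factors as the identity on the cylinder $L\times I$ composed with the dual birth of $U$, whose capping disk lies in $\Sigma_{\mathbf w}$ to match $D_0$; this produces the top Maslov generator, $F_{\mathrm{comp}}(v)=v\otimes T$. Composing yields $F(v)=F_{\mathrm{cap}}(v\otimes T)=0$ for every $v$.

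The main technical obstacle is that the dividing configuration on $D'$ features a closed curve $\partial D_0$ rather than the arc configuration used to define the death cobordism in Figure \ref{fig:death-cobordism}, and similarly for the birth half. Both identifications are handled by the invariance of link cobordism maps under isotopy of $\mathcal{A}$ within $\Sigma$ relative to the links, a foundational property established in \cite[Section 4]{zemke2019link}; using this invariance, the closed curve $\partial D_0$ together with the auxiliary basepoint arcs on $U$ can be deformed to the standard death configuration, and dually for the birth half, completing the argument.
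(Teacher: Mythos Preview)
Your approach has a genuine gap. The closed disk $D_0$ sits inside the cylinder $L\times I$; it is not a separate surface component. When you excise a slightly larger disk $D'\supset D_0$, the only dividing curve inside $D'$ is the closed curve $\partial D_0$, which never meets $U=\partial D'$. Thus $U$ inherits \emph{no} basepoints, and $(U,w',z')$ is not a multi-based link at all. Your ``auxiliary arcs crossing the annulus'' are not present in $\mathcal{A}$; you are adding components to the dividing set, not isotoping it. The invariance you invoke from \cite{zemke2019link} is invariance under ambient isotopy of $\mathcal{A}$ rel boundary, which preserves the number of closed components and arcs --- it cannot turn the configuration $(\partial D_0)\cup(\text{extra arcs})$ into the single-arc death decoration of Figure~\ref{fig:death-cobordism}. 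Moreover, $\Sigma\setminus D'$ is a pair of pants (the cylinder with a disk removed), not a disjoint union of the identity cylinder with a birth disk, so your formula $F_{\mathrm{comp}}(v)=v\otimes T$ is not justified either.

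The paper's proof avoids all of this by slicing horizontally rather than excising a $4$-ball. After shrinking $D_0$ by isotopy, a thin horizontal band of the cylinder containing $D_0$ is exactly the composite of the quasi-stabilization cobordisms $\mathcal{A}_1$ followed by $\mathcal{A}_2$ from Definition~\ref{def:stab}, so $F=F_1\circ S^-\circ S^+\circ F_2$. The formulas in~\eqref{eq:stab} give $S^-\circ S^+(v)=S^-(v\otimes\theta^{\mathbf w})=0$ (and $T^-\circ T^+=0$ for the $\Sigma_{\mathbf z}$ case). This uses the quasi-stabilization maps, which add and remove basepoints on the \emph{same} link component, precisely matching the situation of a closed dividing curve on the cylinder; no separate unknot is ever introduced.
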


\begin{proof}
	Suppose there exists such a connected component of $\Sigma_{\mathbf{w}}$ as in the Lemma, then by shrinking this disk via isotopy, we can decompose $F$ as \[F = F_1\circ S^-\circ S^+\circ F_2,\] for some $F_1$ and $F_2$. Now \[S^-\circ S^+=0,\] so $F=0$. The situation for disks in $\Sigma_{\mathbf{z}}$ is the same by replacing $S$ with $T$.
\end{proof}

Now suppose $\mathcal{F} =(B,\mathbb{L}'\cup \mathbb{K}'(\bk^+,\bk^-),\Sigma,\mathcal{A},v)$ is a almost model Floer lasagna filling.

To simplify notation, we assume $W$ is obtained by attaching a single $2$-handle along the framed knot $K$ and $\mathbb{L}=\emptyset$, as all the operations we are going to do are concentrated near the cocore of each $2$-handle. We assume further that $\bk^+=1,\bk^-=0$, as we can do the following operation to each disk $f_i(B_{\epsilon(0)}\times x^{\pm}_{i,j})$ separately. Then $K'(\bk^+,\bk^-)$ is just a copy of the knot $K$, denoted as $K'$. Pick two basepoints $w_0,z_0$ on $K'$ among $\w',\z'$. By Lemma \ref{lem:extension}, we can write $v\in \widehat{HFL}(K',\w',\z')$ as  
\begin{equation}
	\label{eq:destab}
	 v= \sum_{i=1}^{k}v_i\otimes u_i,
\end{equation}
with $v_i\in \widehat{HFL}(K',w_0,z_0)$, $u_i \in V^{\otimes(|\w'|-1) }$. 

 To get a model Floer lasagna filling, We can apply some isotopy $\mathcal{I}$ of $\Sigma$ relative to the boundary, such that some dividing arc in $\mathcal{A}$ passes through the center of the disk. Then $\mathcal{F}$ is equivalent to the model Floer lasagna filling $\mathcal{F_{\mathcal{I}}}  = (B,\mathbb{K}',\Sigma',\mathcal{A}_{\mathcal{I}},F_{\mathcal{I}}(v))$, where $\mathbb{K}' = (K,w_0,z_0)$, $\Sigma'$ is a neighborhood of the center of $\Sigma$, $\mathcal{A}_{\mathcal{I}} = \mathcal{I}(\mathcal{A}) \cap \Sigma'$, and $F_{\mathcal{I}}$ is the induced map by by the decorated link cobordism $(S^3\times I, \Sigma \backslash \Sigma', \mathcal{I}(\mathcal{A})\backslash \mathcal{A}')$.

Of course, if we apply a different isotopy $\mathcal{I'}$ instead of $\mathcal{I}$, we will get another model Floer lasagna filling $\mathcal{F_{\mathcal{I}'}}  = (B,\mathbb{K}',\Sigma',\mathcal{A}_{\mathcal{I}'},F_{\mathcal{I}'}(v))$ defined similarly. See Figure \ref{fig:isotopy} for an illustration.
 Note all the data in $\mathcal{F}_{\mathcal{I}}$ and $\mathcal{F_{\mathcal{I}'}}$ are the same except $F_{\mathcal{I}}(v)$ and $F_{\mathcal{I}'}(v)$. We would like to see how different they could be.

 It seems a bit complicated to analyze the effect of applying isotopy to a decorated disk with some random dividing arcs on it. Instead, we will show first that such a Floer lasagna filling could be replaced by an equivalent one, which is a linear combination of Floer lasagna fillings such that the dividing arc on each filling disk is just a single line segment. It is done by concatenating $\Sigma$ with $K\times I$, as mentioned in Lemma \ref{lem:extension}. For such Floer lasagna fillings, the effect of applying isotopy to the dividing arcs is the same as applying the basepoint moving maps on link Floer homology. The explicit statement is given in Lemma \ref{lem:basepoint}.
 \begin{figure}[h]
 	\[
 	{   \fontsize{11pt}{10pt}\selectfont
 		\def\svgwidth{3.5in}
 		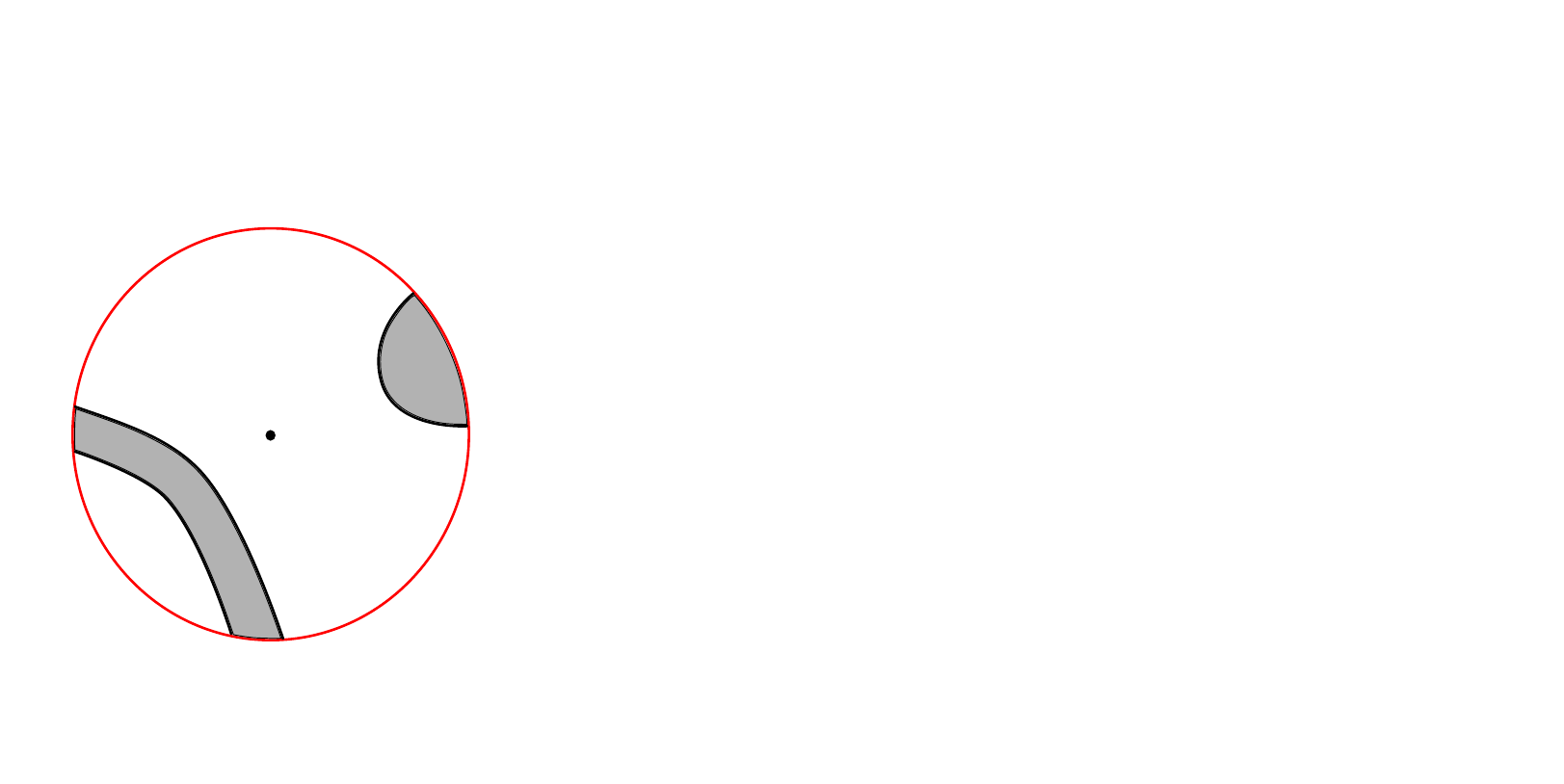
 	}
 	\]
 	\caption{Different isotopies $\mathcal{I},\mathcal{I}'$ to $\Sigma$}
 	\label{fig:isotopy}
 \end{figure}

\begin{lemma}
	\label{lem:concatenation}
	For a almost model Floer lasagna fillings $\mathcal{F} =(B,K',\w',\z',\Sigma,\mathcal{A},v)$, we can find model Floer lasagna fillings $\mathcal{F}_i = (B, K', w_0, z_0,\Sigma^i,\mathcal{A}_i,v_i)$ for $i=1,...,n$, such that \[[\mathcal{F}] = \sum_{i\in \mathbb{J}}[\mathcal{F}_i]\]
	in the Floer lasagna module $\mathcal{FL}(W,\emptyset)$. Here $\mathbb{J}$  is a subset of $\{1,...,n\}$, such that for each $i\in \mathbb{J}$, the decorated disk $(\Sigma_i,\mathcal{A}_i)$ is isotopic to the following standard one in Figure \ref{fig:standard-decoration}:

\begin{figure}[h]
	\[
	{
		\fontsize{11pt}{10pt}\selectfont
		\def\svgwidth{0.9in}
\begingroup%
  \makeatletter%
  \providecommand\color[2][]{%
    \errmessage{(Inkscape) Color is used for the text in Inkscape, but the package 'color.sty' is not loaded}%
    \renewcommand\color[2][]{}%
  }%
  \providecommand\transparent[1]{%
    \errmessage{(Inkscape) Transparency is used (non-zero) for the text in Inkscape, but the package 'transparent.sty' is not loaded}%
    \renewcommand\transparent[1]{}%
  }%
  \providecommand\rotatebox[2]{#2}%
  \newcommand*\fsize{\dimexpr\f@size pt\relax}%
  \newcommand*\lineheight[1]{\fontsize{\fsize}{#1\fsize}\selectfont}%
  \ifx\svgwidth\undefined%
    \setlength{\unitlength}{145.94688027bp}%
    \ifx\svgscale\undefined%
      \relax%
    \else%
      \setlength{\unitlength}{\unitlength * \real{\svgscale}}%
    \fi%
  \else%
    \setlength{\unitlength}{\svgwidth}%
  \fi%
  \global\let\svgwidth\undefined%
  \global\let\svgscale\undefined%
  \makeatother%
  \begin{picture}(1,0.84886737)%
    \lineheight{1}%
    \setlength\tabcolsep{0pt}%
    \put(0,0){\includegraphics[width=\unitlength,page=1]{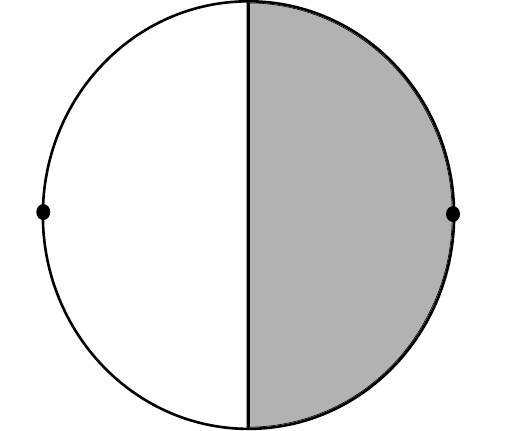}}%
    \put(0.93214457,0.39099653){\makebox(0,0)[lt]{\lineheight{1.25}\smash{\begin{tabular}[t]{l}$w$\end{tabular}}}}%
    \put(-0.0439689,0.39878302){\makebox(0,0)[lt]{\lineheight{1.25}\smash{\begin{tabular}[t]{l}$z$\end{tabular}}}}%
  \end{picture}%
\endgroup%

	}
	\]
	\caption{The standard decorated disk}
	\label{fig:standard-decoration}
\end{figure}

\end{lemma}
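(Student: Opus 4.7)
The plan is to decompose the input $v$ via the destabilization isomorphism and then absorb the complicated dividing arcs on $\Sigma$ into a cylindrical quasi-stabilization cobordism, using the equivalence relation on Floer lasagna fillings. Using Lemma~\ref{lem: destabilization} I write
\[
	v = \sum_{i=1}^{n} v_i \otimes u_i,
\]
where $v_i \in \widehat{HFL}(K',w_0,z_0)$ and each $u_i$ is a basis monomial in the generators $\theta^{\w},\xi^{\w}$ of the factors of $V^{\otimes(|\w'|-1)}$. By multilinearity of $\mathcal{FL}(W,\emptyset)$ in the input I obtain $[\mathcal{F}] = \sum_{i=1}^n [\mathcal{F}^{(i)}]$, where $\mathcal{F}^{(i)}$ is the same data as $\mathcal{F}$ but with input $v_i \otimes u_i$ in place of $v$.

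Next, Lemma~\ref{lem:extension} produces, for each $i$, a decorated cylinder $C_i = (S^3 \times I,\, K' \times I,\, \mathcal{A}_{u_i})$, built piecewise from cylinders of types $\mathcal{A}_1$ and $\mathcal{A}_3$ (Definition~\ref{def:stab}), whose induced map sends $v_i$ to $v_i \otimes u_i$. I shrink the ball $B$ of $\mathcal{F}$ slightly to a model-filling ball $B' \subset B$ such that $B \setminus B'$ is a collar of $\partial B$, and embed $C_i$ in this collar so that its bottom end lies on $\partial B'$ carrying $(K',w_0,z_0)$ and its top end lies on $\partial B$ carrying $(K',\w',\z')$. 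Applying the equivalence relation $\sim'$ with the smaller ball $B'$ of a new filling absorbed into the ball $B$ of $\mathcal{F}^{(i)}$ produces an equivalence $\tilde{\mathcal{F}}_i \sim \mathcal{F}^{(i)}$, where
\[
	\tilde{\mathcal{F}}_i = \bigl(B',\, (K', w_0, z_0),\, \Sigma^i,\, \mathcal{A}_i,\, v_i\bigr),
\]
with $\Sigma^i$ the concatenation of $C_i$ and $\Sigma$ across $\partial B$, and $\mathcal{A}_i = \mathcal{A}_{u_i} \cup \mathcal{A}$. Topologically $\Sigma^i$ is still a disk capping a copy of $K'$.

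Finally I analyze the dividing set $\mathcal{A}_i$ on the disk $\Sigma^i$. Since $\partial \Sigma^i = K'$ now carries only the two basepoints $w_0, z_0$, the defining axioms of a Floer lasagna filling force $\mathcal{A}_i \cap \partial \Sigma^i$ to consist of exactly two points, one in each arc of $K' \setminus \{w_0,z_0\}$. Hence, up to isotopy rel $\partial \Sigma^i$, the set $\mathcal{A}_i$ decomposes as the disjoint union of a single boundary-to-boundary arc $\gamma_i$ and a finite collection of simple closed curves in the interior of $\Sigma^i$. If no closed curve is present, $\gamma_i$ is isotopic rel boundary to the standard diameter of Figure~\ref{fig:standard-decoration} and $\tilde{\mathcal{F}}_i$ becomes a model Floer lasagna filling $\mathcal{F}_i$; put $i \in \mathbb{J}$. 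If some closed curve $C \subset \mathcal{A}_i$ exists, choose $C$ innermost, so it bounds a subdisk $D \subset \Sigma^i$ lying entirely in $\Sigma^i_{\w}$ or $\Sigma^i_{\z}$. A further application of $\sim'$ enlarging $B'$ to a ball $B''$ whose boundary meets $\Sigma^i$ transversely in a copy of $(K',w_0,z_0)$ but contains a regular neighborhood of $D$ exhibits the new input as the image of $v_i$ under a cobordism map between $\widehat{HFL}(K',w_0,z_0)$ and itself whose defining decoration contains a closed subdisk of one of $\Sigma^i_{\w}, \Sigma^i_{\z}$. By Lemma~\ref{lem:vanishing} this cobordism map is zero, so $[\tilde{\mathcal{F}}_i] = 0$ in $\mathcal{FL}(W,\emptyset)$ and such $i$ are dropped from $\mathbb{J}$. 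Summing over $i$ yields $[\mathcal{F}] = \sum_{i \in \mathbb{J}} [\mathcal{F}_i]$. The main subtlety is the vanishing step: one must enlarge $B'$ to $B''$ so that $D$ lies strictly in the interior of the intermediate cobordism while $\partial B'' \cap \Sigma^i$ remains isotopic to $(K',w_0,z_0)$, and the innermostness of $C$ is precisely what makes this possible.
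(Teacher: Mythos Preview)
Your argument is correct and matches the paper's approach: decompose $v$ using Lemma~\ref{lem: destabilization}, realize each summand via a quasi-stabilization cylinder (Lemma~\ref{lem:extension}) concatenated onto $\Sigma$, and then observe that the resulting decorated disk is either standard or contains a closed $\mathbf w$/$\mathbf z$ subdisk forcing the class to vanish by Lemma~\ref{lem:vanishing}. The only cosmetic difference is that you frame the dichotomy as ``one arc plus closed curves'' and make the ball-enlargement in the vanishing step explicit, whereas the paper argues directly that for the surviving indices each of $\Sigma^i_{\mathbf w},\Sigma^i_{\mathbf z}$ must be a single disk touching $\partial\Sigma^i$; one small caveat is that innermostness of $C$ is really what ensures $D$ lies in a single type region (so Lemma~\ref{lem:vanishing} applies), while the enlargement itself is justified by first isotoping $D$ rel $\partial\Sigma^i$ into a collar of the boundary.
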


\begin{proof}
	It follows from the previous lemmas. By Lemma \ref{lem:extension}, for each element $u_i\in V^{\otimes (|\w'|-1)}$ as in Equation \ref{eq:destab}, we can find some decorated link cobordism $(S^3\times I,K\times I,\mathcal{A}'_i)$, such that \[F_i(v_i) =v_i\otimes u_i, \] where $F_i$ is the induced map on link Floer homology. Therefore, if we define $(\Sigma^i,\mathcal{A}_i)$ by concatenating $(\Sigma, \mathcal{A})$ with $(K\times I,\mathcal{A}'_i)$, and let $\mathcal{F}_i = (B,K,w_0,z_0,\Sigma^i,\mathcal{A}_i,v_i)$, for $i =1,...,n$, we get 
	\[ [\mathcal{F}] = \sum_{i=1}^{n}[\mathcal{F}_i]  \] in the Floer lasagna module $\mathcal{FL}(W,\emptyset)$, by the equivalence relation and the linearity. 
	
	By Lemma \ref{lem:vanishing}, among all the $[\mathcal{F}_i]$'s, those that contain a closed disk as a connected components of $\Sigma^i_{\mathbf{w}}$ or $\Sigma^i_{\mathbf{z}}$ are actually $0$ in $\mathcal{FL}(W,\emptyset)$. See Figure \ref{fig:destabilization} for an illustration. Define $\mathbb{J}\subset\{1,..,n\}$ as the subset consisting of some indices $i$ such that $[\mathcal{F}_i]\neq 0$. 
\begin{figure}[h]
	\[
	{
		\fontsize{12pt}{12pt}\selectfont
		\def\svgwidth{4in}
		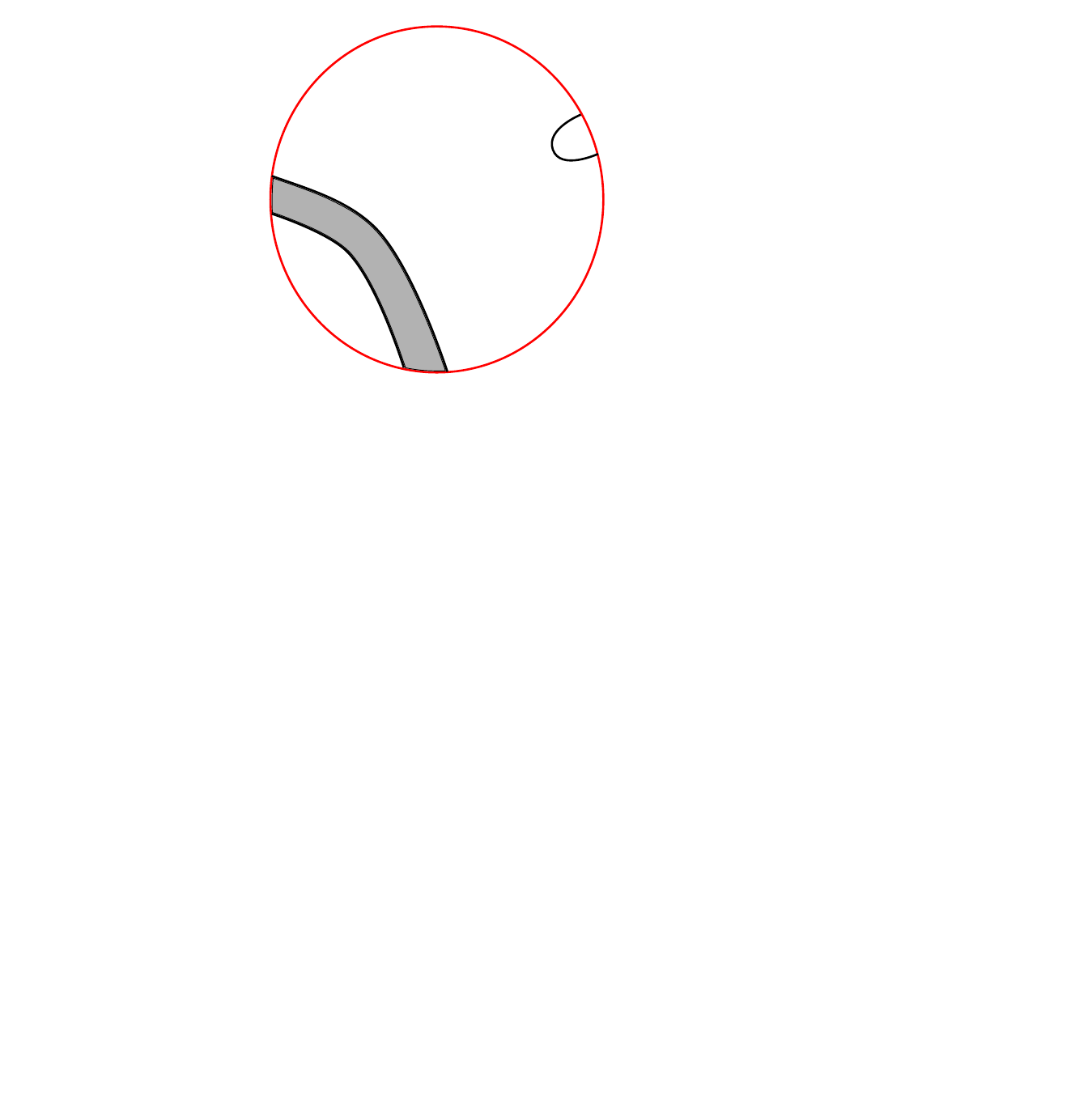
	}
	\]
	\caption{Different concatenations according to $u_i$}
	\label{fig:destabilization}
\end{figure}

	Take an index $i \in \mathbb{J}$. Each component of $\Sigma^i_{\mathbf{w}}$ and $\Sigma^i_{\mathbf{z}}$ intersects the boundary $\partial \Sigma^i$ non-trivially, by the definition of $\mathbb{J}$. Since there are only two basepoints $w_0$ and $z_0$ on $\partial \Sigma^i$, and there is a basepoint in each connected component of $\Sigma^i_{\mathbf{w}}\cap \partial \Sigma^i$ and $\Sigma^i_{\mathbf{z}}\cap \partial \Sigma^i$, each of $\Sigma^i_{\mathbf{w}}$ and $\Sigma^i_{\mathbf{z}}$ has exactly one connected component, which intersects $\partial \Sigma'$ once. If $\Sigma^i_{\w}$ is not topologically a disk, then $\Sigma^i_{\mathbf{z}} = \Sigma^i \backslash \overline{\Sigma^i_{\mathbf{w}}}$ would have more than one connected component. A similar statement holds for $\Sigma^i_{\mathbf{z}}$ as well. So both $\Sigma^i_{\mathbf{w}}$ and $\Sigma^i_{\mathbf{z}}$ are topologically a disk, and after applying some isotopy to $\Sigma_i$ relative to the boundary, it will be of the form as in Figure \ref{fig:standard-decoration}. 
	
\end{proof}

Now we can compare the difference between applying different isotopies $\mathcal{I}$ and $\mathcal{I}'$. More explicitly:

\begin{lemma}
	\label{lem:basepoint}
	For a almost model Floer lasagna fillings $\mathcal{F} = (B,K',\w',\z',\Sigma,\mathcal{A},v)$ of $(W ,\emptyset)$ as in the Lemma \ref{lem:concatenation}. Let \[[\mathcal{F}] = \sum_{i\in \mathbb{J}} [\mathcal{F}_i],\]
	where $\mathcal{F}_i = (B,K,w_0,z_0,\Sigma^i,\mathcal{A}_i,v_i)$ is as defined in Lemma \ref{lem:concatenation}. Suppose $\mathcal{I},\mathcal{I}'$ are two isotopies of $\Sigma$ which moves one of the dividing arcs passing the origin. For each $i\in \mathbb{J}$, extend each of the isotopies $\mathcal{I}$, $\mathcal{I}'$ to an isotopy on $\Sigma^i$ by identity on $K\times I$, still denoted as $\mathcal{I},\mathcal{I}'$. Let $\Sigma'$ be a small neighborhood of the center of $\Sigma$, such that the dividing arcs $\mathcal{I}(\mathcal{A})\cap \Sigma'$ and $\mathcal{I}'(\mathcal{A})\cap \Sigma'$ are some line segments. Consider the induced map \[F_i,F'_i:\widehat{HFL}(K,w_0,z_0) \to \widehat{HFL}(K,w_0,z_0)\] by the decorated link cobordism $(S^3\times I, \Sigma^i\backslash\Sigma', \mathcal{I}(\mathcal{A}_i) \cap (\Sigma^i\backslash\Sigma'))$  and $(S^3\times I, \Sigma^i\backslash\Sigma', \mathcal{I'}(\mathcal{A}_i) \cap (\Sigma^i\backslash\Sigma'))$ respectively, then 
	\[F_i' =  \phi^{k_i}\circ F_i,\]
	for some $k_i\in \mathbb{Z}$, where $\phi$ is the basepoint moving map on $\widehat{HFL}(K,w_0,z_0)$ induced by the decorated link cobordism $(S^3\times I, K\times I, tw)$ as in Figure \ref{fig:basepoint-moving}.
\end{lemma}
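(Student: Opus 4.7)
The plan is to reduce the difference between $F'_i$ and $F_i$ to a power of a Dehn twist on the annular region $\Sigma^i \setminus \Sigma'$, and then to identify each Dehn twist with the basepoint moving map $\phi$. First, observe that $\Sigma^i$ is topologically a disk (the disk capping $K$ concatenated with a cylinder $K \times I$), while $\Sigma' \subset \Sigma \subset \Sigma^i$ is a small disk, so $\Sigma^i \setminus \Sigma'$ is an annulus, viewed as a decorated cobordism from $K$ to $K$. Since $\mathcal{I}$ and $\mathcal{I}'$ are isotopies of $\Sigma$ rel $\partial \Sigma$, extended by identity on the appended $K \times I$, the families $\mathcal{I}(\mathcal{A}_i)$ and $\mathcal{I}'(\mathcal{A}_i)$ agree on $\partial \Sigma^i$; and by the choice of $\Sigma'$ they also agree on $\partial \Sigma'$ as standard line segments through the origin. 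Thus both decorated cobordisms have identical boundary data, and the only difference lies in how the interior dividing arcs wind around the core of the annulus.

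Next, I would classify this difference using the mapping class group of the annulus fixing the boundary pointwise, which is $\mathbb{Z}$, generated by a Dehn twist $\tau$ about the core circle. Two embedded dividing-arc configurations on the annulus with the same boundary behavior are related, up to isotopy rel boundary, by a unique power $\tau^{k_i}$ for some $k_i \in \mathbb{Z}$; this integer is the signed winding number of $\mathcal{I}'(\mathcal{A}_i)$ relative to $\mathcal{I}(\mathcal{A}_i)$ along the core of the annulus. By the functoriality of decorated link cobordism maps established in \cite{zemke2019link}, the induced map depends only on the isotopy class of the decoration rel boundary, so $F'_i$ differs from $F_i$ exactly by composition with the map induced by the $k_i$-fold Dehn-twisted identity cobordism on $K \times I$.

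Finally, I would identify a single Dehn twist of $K \times I$ with the basepoint moving cobordism $(S^3 \times I, K \times I, tw)$ of Figure \ref{fig:basepoint-moving}: rotating the dividing arcs once around the core circle of the annulus produces, after isotopy rel boundary, exactly the twisted decoration shown there, whose induced map is $\phi$ by definition. Functoriality of decorated link cobordism maps then gives that $k_i$ Dehn twists induce $\phi^{k_i}$, yielding $F'_i = \phi^{k_i} \circ F_i$. The main technical difficulty is this last identification: one must work in a local annular chart and match the specific decoration produced by a Dehn twist with that of Figure \ref{fig:basepoint-moving}, keeping careful track of orientation conventions so that the sign of $k_i$ is consistent. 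Once this is verified, the rest of the argument is a direct application of the mapping class group of the annulus together with the functoriality of \cite{zemke2019link}.
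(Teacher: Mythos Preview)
Your proposal is correct and follows essentially the same approach as the paper. The only cosmetic difference is that the paper phrases the classification of decorations on the annulus via the relative fundamental group $\pi_1(\Sigma^i\setminus\Sigma',\,\partial(\Sigma^i\setminus\Sigma'))\cong\mathbb{Z}$ (recording the homotopy class of the core arc of $\Sigma^i_{\mathbf{w}}\cap(\Sigma^i\setminus\Sigma')$), whereas you use the mapping class group of the annulus generated by the Dehn twist; both encode the same integer winding number and lead to the same concatenation formula with $(S^3\times I,K\times I,tw)^{\circ k}$.
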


\begin{figure}[h]
	\[
	{
		\fontsize{10pt}{10pt}\selectfont
		\def\svgwidth{5in}
		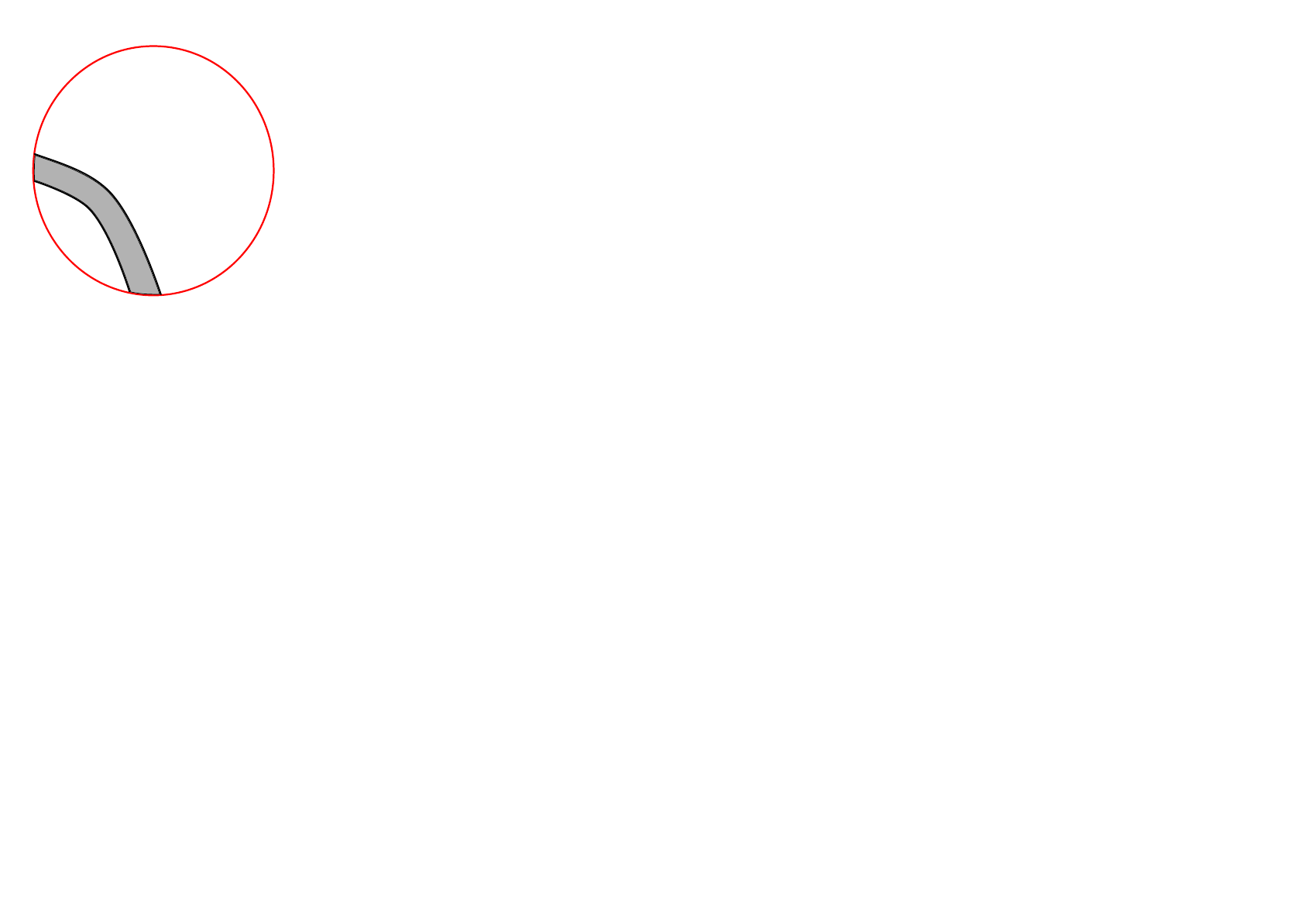
	}
	\]
	\caption{Different isotopies differed by basepoint moving}
	\label{fig:different-twist}
\end{figure}

\begin{proof}
	 See Figure \ref{fig:different-twist} for an illustration of the proof. 
	  By Lemma \ref{lem:concatenation}, the dividing arc $\mathcal{I}(\mathcal{A}_i)$ divides the filling disk $\Sigma^i$ into $\Sigma^i_{\w}$ and $\Sigma_{\mathbf{z}}^i$, both of which are disks. Then after deleting $\Sigma'$, which is a small disk centered at some point on $\mathcal{I}(\mathcal{A}_i)$, each of the subsurfaces 
	$\Sigma^i_{\w}\cap (\Sigma^i\backslash\Sigma')$ and   $\Sigma_{\mathbf{z}}^i\cap (\Sigma^i\backslash\Sigma')$ is homeomorphic to $I\times I$, with $I\times \{0\} \subset \partial \Sigma'$ and $I\times \{1\}\subset \partial \Sigma^i$. Let $[p^i_{\mathcal{I}}] \in \pi_1(\Sigma^i\backslash\Sigma', \partial (\Sigma^i\backslash\Sigma')) = \mathbb{Z}$ be the homotopy class of the path corresponding to $\{1/2\}\times I$ in $\Sigma^i_{\w}\cap (\Sigma^i\backslash\Sigma')$. Now, up to isotopy on the annulus $\Sigma^i\backslash
	\Sigma'$ relative to the boundary, such decorated link cobordism is classified by the class $ [p^i_{\mathcal{I}}]\in \mathbb{Z}$. Suppose we start with a different isotopy $\mathcal{I}'$. Then we have, up to isotopy relative to the boundary,
	\[(S^3\times I, \Sigma^i\backslash \Sigma',\mathcal{I'}(\mathcal{A})\cap (\Sigma^i\backslash\Sigma')) = (S^3\times I, K\times I,tw)^{\circ k}\circ (S^3\times I, \Sigma^i\backslash\Sigma',\mathcal{I}(\mathcal{A})\cap (\Sigma^i\backslash\Sigma')) \]
	where $\circ$ is concatenating decorated link cobordisms, and $k = [p^i_{\mathcal{I'}}] - [p^i_{\mathcal{I}}]$. Therefore, \[F'_i = \phi^{k_i}\circ F_i.\]\end{proof}

Finally, we begin the proof of Proposition \ref{prop:cabled link homology}.

\subsection{Proof of Proposition \ref{prop:cabled link homology}}
\begin{proof}[Proof of Proposition \ref{prop:cabled link homology}]
	This follows closely the treatment as in \cite[Section 4]{manolescu2020skein}, except for the dividing arcs on Floer lasagna fillings. We begin by defining the map 
	\[\Phi: \widehat{cHFL}(\mathbb{L},K)\to \mathcal{FL}(W,\mathbb{L}).\] 

	For every $ \mathbf{k}^+,\bk^-\in \mathbb{N}^n$ and every $v\in \widehat{HFL}(\mathbb{L},K,\mathbf{k}^+,\mathbf{k}^-)$, we define the Floer lasagna filling $\tilde{\Phi}(v) =(B,\mathbb{L}'\cup \mathbb{K}'(\bk^+,\bk^-),\Sigma,\mathcal{A},v)$ to be the model Floer lasagna filling as in Definition \ref{def:model}.

  Now we check that equivalent elements in $\widehat{HFL}(\mathbb{L},K,\bk^+,\bk^-)$ give equivalent Floer lasagna fillings. For the braid group action and the basepoint moving maps, it is easy to see that \[\tilde{\Phi}(v) = \tilde{\Phi}(F_{\tau}(v)), \quad \tilde{\Phi}(v) = \tilde{\Phi}(F_{m_{i,j}}(v)) \quad \text{in } \mathcal{FL}(W,\mathbb{L})\]
  by gluing the corresponding cobordism to the interior of $B$, and using functoriality of the link cobordism maps on link Floer homology
\begin{figure}[h]
	\[
	{
		\fontsize{11pt}{10pt}\selectfont
		\def\svgwidth{1.5in}
\begingroup%
  \makeatletter%
  \providecommand\color[2][]{%
    \errmessage{(Inkscape) Color is used for the text in Inkscape, but the package 'color.sty' is not loaded}%
    \renewcommand\color[2][]{}%
  }%
  \providecommand\transparent[1]{%
    \errmessage{(Inkscape) Transparency is used (non-zero) for the text in Inkscape, but the package 'transparent.sty' is not loaded}%
    \renewcommand\transparent[1]{}%
  }%
  \providecommand\rotatebox[2]{#2}%
  \newcommand*\fsize{\dimexpr\f@size pt\relax}%
  \newcommand*\lineheight[1]{\fontsize{\fsize}{#1\fsize}\selectfont}%
  \ifx\svgwidth\undefined%
    \setlength{\unitlength}{149.09824371bp}%
    \ifx\svgscale\undefined%
      \relax%
    \else%
      \setlength{\unitlength}{\unitlength * \real{\svgscale}}%
    \fi%
  \else%
    \setlength{\unitlength}{\svgwidth}%
  \fi%
  \global\let\svgwidth\undefined%
  \global\let\svgscale\undefined%
  \makeatother%
  \begin{picture}(1,0.92875033)%
    \lineheight{1}%
    \setlength\tabcolsep{0pt}%
    \put(0,0){\includegraphics[width=\unitlength,page=1]{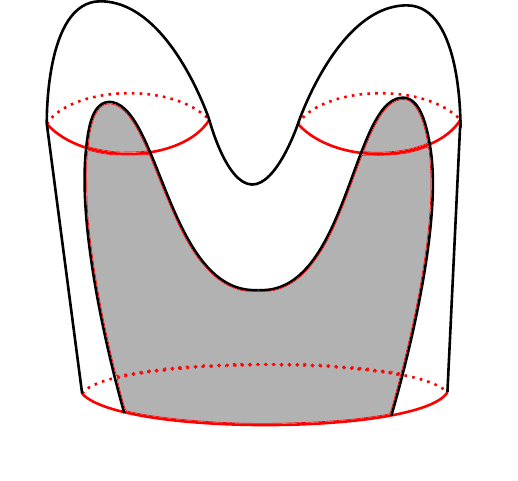}}%
    \put(0.48559361,0.00129501){\makebox(0,0)[lt]{\lineheight{1.25}\smash{\begin{tabular}[t]{l}$U$\end{tabular}}}}%
    \put(0.94164122,0.70632667){\makebox(0,0)[lt]{\lineheight{1.25}\smash{\begin{tabular}[t]{l}$K$\end{tabular}}}}%
    \put(-0.00856374,0.69616406){\makebox(0,0)[lt]{\lineheight{1.25}\smash{\begin{tabular}[t]{l}$K$\end{tabular}}}}%
  \end{picture}%
\endgroup%

	}
	\]
	\caption{Relations given by the pair-of-pants map}
	\label{fig:death-pants}
\end{figure}
  
  For the relation involving the pair-of-pants map, note that we can isotopy the death cobordism over an unknot to the composition of the pair-of-pants cobordism with two capping disks over the newly added oppositely oriented parallel copies of $K_i$. See Figure \ref{fig:death-pants} for an illustration. On the other hand, we can evaluate the death cobordism as in Equation (\ref{eq:death cobordism}) by the equivalence relation of Floer lasagna fillings. This gives the relations involving $F_P$, and we see $\tilde{\Phi}$ descends to a well-defined map \[ \Phi:\widehat{cHFL}(\mathbb{L},K)\to \mathcal{FL}(W,\mathbb{L}).\]

  It is easy to see $\Phi$ preserves both the Maslov grading and the Alexander grading, as we have done shifts in the Maslov grading appropriately in the definition of the Maslov gradings on Floer lasagna module. 
  
  To show $\Phi$ is a bijection, we define the inverse map \[\Psi: \mathcal{FL}(W,\mathbb{L}) \to \widehat{cHFL}(\mathbb{L},K)\] as follows. Given a Floer lasagna filling $\mathcal{F} = (B_i,\mathbb{L}_i,\Sigma,\mathcal{A},v_i)$, we can always take some small neighborhood $f_i(B_{\epsilon}(0)\times D^2)$ of each cocore such that the intersection of $\Sigma$ with $f_i(B_{\epsilon}(0)\times D^2)$ could be isotoped to parallel copies of disks $f_i(B_{\epsilon}(0)\times x^{\pm}_{i,j})$, which are transverse to the cocore disk $f_i(\{0\}\times D^2)$. Let $W' = W \backslash \cup_i f_i(B_{\epsilon}(0)\times D^2)$. Take a small neighborhood $nbhd(\partial W')$ of $\partial W'$ such that the components of  $\Sigma \cap nbhd(\partial W')$ which have boundary $L\times \{0,1\}$ could be isotoped to $L\times I$, and the dividing arcs on $L\times I$ is given by $C\times I$ for some set of points $C\subset L\backslash \w\cup \z$.  Take $B'= W'\backslash nbhd(\partial W')$, which contains all the $B_i$. Define $L',K',\w',\z'$ as in the second and third bullet point of the definition of $\Phi$. Let $\Sigma' = \Sigma \cap (\cup_{i}f_i(B_{\epsilon}(0)\times D^2))$, $\mathcal{A}' = \mathcal{A} \cap \Sigma'$, and $v' =F'(\otimes v_i)$, where $F'$ is the map induced by the decorated link cobordism $(B\backslash (\cup_iB_i), \Sigma \backslash \Sigma', \mathcal{A} \backslash\mathcal{A}')$.  By the equivalence relation on Floer lasagna fillings, we get 
  \[[\mathcal{F}] = [\mathcal{F'}], \text{where } \mathcal{F'} = (B',\mathbb{L}'\cup \mathbb{K}'(\mathbf{k}^+,\mathbf{k}^-),\Sigma',\mathcal{A}',v')  \]
$\mathcal{F'}$ is almost of the form $\Phi(v)$ for some $v\in \widehat{cHFL}(\mathbb{L},K)$. We get an almost model Floer lasagna filling instead of a model one. To resolve this, we take the following procedures:
  
  Apply an isotopy $\mathcal{I}_{i,j}$ relative the boundary to each disk $f_i(B_{\epsilon}(0)\times x^{\pm}_{i,j})$, such that the set of dividing arcs on it passes through the point $f_i(\{0\}\times x^{\pm}_{i,j})$, i.e. intersecting with the cocore. Extend each isotopy to an isotopy $\mathcal{I}_{i,j}$ on $W$, which is identity outside some small neighborhood of $f_i(B_{\epsilon}(0)\times x^{\pm}_{i,j})$. Denote the composition of all the isotopies $\mathcal{I}_{i,j}$ by $\mathcal{I}$. Since there are finitely many such disks, we can take some smaller neighborhood $N'' = \cup_if_i(B_{\delta}(0)\times D^2)$ of the cocores, such that the set of dividing arcs on $\mathcal{I}_{i,j}(f_i(B_{\epsilon}(0)\times x^{\pm}_{i,j}))\cap N''$ is given by a single line segment. In particular, the $\Sigma_{\w}$ and $\Sigma_{\z}$ part of $\mathcal{I}_{i,j}(f_i(B_{\epsilon}(0)\times x^{\pm}_{i,j}))\cap N''$ are both half-disks, hence there are exactly $2$ basepoints on the boundary link 
  $\partial (\mathcal(I)_{i,j}(f_i(B_{\epsilon}(0)\times x^{\pm}_{i,j}))\cap N'')$, for each $i,j$. 
  
  Now replace $B'$ by a slightly larger ball $B''$, which is $W\backslash N''$ minus some collar neighborhood of its boundary. Define $L'',K''(\mathbf{k}^+,\mathbf{k}^-)$ as before. Let $\w'',\z''$ consists of basepoints on $L''$ which correspond to $\w,\z$ on $L$, and two basepoints on each component of $K''(\mathbf{k}^+,\mathbf{k}^-)$. Let $\Sigma'' =  \mathcal{I}(\Sigma')\cap N''$, $\mathcal{A}'' = \mathcal{I}(A) \cap \Sigma''$, and $v''= F''(v')$, where $F'' $ is the map induced by the decorated link cobordism $(B''\backslash B', \mathcal{I}(\Sigma')\backslash \Sigma'', \mathcal{I}(\mathcal{A}')\backslash\mathcal{A}''). $ Then $\mathcal{F''} = (B'',L''\cup K''(\mathbf{k}^+,\mathbf{k}^-),\w'',\z'',\Sigma'',\mathcal{A}'',v'')$ is a model Floer lasagna filling, hence an element of the form $\mathcal{F''} = \tilde{\Phi}(v'')$. Define $\Psi([\mathcal{F}]) = [v'']$, the equivalence class represented by $v''$ in $\widehat{cHFL}(\mathbb{L},K)$.
   
  Now we check $\Psi$ is well-defined. Equivalent Floer lasagna fillings give the same value under $\Psi$ by the functoriality of link cobordism maps, as the equivalence amounts to applying the link cobordism map corresponding to $(B'\backslash(\cup_{j}B_j), \Sigma \cap(B'\backslash(\cup_{j}B_j)) )$, where $B'$ is away from all the cocores. Hence they return the same result after the above the procedure.
  
  In the process of defining $\Psi$, we have made the following choices: 
  \begin{itemize}
  	\item An isotopy which makes the part of the filling surface $\Sigma \cap f_i(B_{\epsilon}(0))$ transverse to the cocore.
  	\item An isotopy for each copy of the disk $f_i(B_{\epsilon}(0)\times x^{\pm}_{i,j})$ which moves the dividing arcs passing through the cocore.
  \end{itemize}

The ambiguity in the choice of the second isotopy leads to quotienting out the basepoints moving maps, as explained in Lemma \ref{lem:basepoint}.

The ambiguity in the choice of the first isotopy leads to quotienting out the braid group action and the pair-of-pants maps, as explained in the proof of Theorem 1.1 in \cite[Section 4]{manolescu2020skein}. Here is an outline of the argument. Suppose we have chosen another isotopy of $W$ relative to $\partial W$, connect these two by a $\left[0,1\right]$-family of isotopies $\Lambda_t$. Consider the image of $\Sigma$ under this family of isotopies. When $\Lambda_t(\Sigma)$ stays transverse to the cocore disks, this gives cobordism corresponding to the braid group action as in Figure \ref{fig:braid-group-action}. Then the output of $\Phi$ is changed from $v$ to $F(v)$, where $F$ is the map induced by the cobordism of the form as in Figure $\ref{fig:braid-group-action}$, except we don't know the decoration on the surface. 
 
 There are finitely many times $t_k\in\left[0,1\right]$ at which the image of $\Sigma$ stops being transverse to the core disk. In that case, there will be a creation or cancellation of two intersection points of $\Lambda_t(\Sigma)$ with the cocore disks. This corresponds to the birth/death cobordisms of an unknot. Suppose at time $t_k$ two new intersection points are created, then before time $t_k$, the map $\Phi$ gives some element  \[v\in \widehat{HFL}(L\cup U,K,\bk^+,\bk^-),\] and after time $t_k$, the map $\Phi$ gives the element \[F(v)\in\widehat{HFL}(L\cup U,K,\bk^++e_i,\bk^-+e_i),\] where $F$ is the map induced by a pair-of-pants, which is the capping disk of $U$ cutting out two core disks. See Figure \ref{fig:death-pants} for an illustration.

One issue that is not covered there is that we need to argue that it is enough to consider decorated link cobordisms with decorations of the forms we used in defining the cabled link Floer homology, as in Figure \ref{fig:braid-group-action} and \ref{fig:pair-of-pants}. The reason is as follows:

For any almost model Floer lasagna fillings, we can replace it by a linear combination of model ones as in Lemma \ref{lem:concatenation}.Then we consider the corresponding cobordisms, starting from and ending with these model Floer lasagna fillings. For braid group actions, if the decoration is not of the form as in Figure \ref{fig:braid-group-action}, then it divides into two cases. If either $\Sigma_{\textbf{z}}$ or $\Sigma_{\mathbf{w}}$ contains a closed disk component, then by Lemma \ref{lem:vanishing}, the Floer lasagna filling is $0$, so the relation trivially holds. Otherwise, with similar proof as in Lemma \ref{lem:basepoint}, we can always compose some basepoint moving cobordism as in Figure \ref{fig:basepoint-moving} such that it becomes the one in Figure \ref{fig:braid-group-action}. 

For the relations involving the pair-or-pants map $F_P$, we start with a standard decorated disk $D$ as in Figure \ref{fig:standard-decoration} capping the unknot $U$, and cut out two standard decorated capping disks $D_1,D_2$ of $K'$. The remaining region $D\backslash (D_1\cup D_2)$ induces the pair-of-pants cobordism. If we allow isotopy of the remaining surface $D\backslash (D_1\cup D_2)$ which could move the boundaries, then we can isotopy the decoration on $D\backslash (D_1\cup D_2)$ to one of the two decorations as drawn in Figure \ref{fig:different-pair-of-pants}. These two decorations are different by the ordering of the two outputs, i.e. if we travel along the boundary of $\Sigma_{\mathbf{w}}$ clockwise, whether we go through $U,K^+,K^-$, or $U,K^-,K^+$. This amounts to the following difference: when we add the two new basepoints $w'$,$z'$ to $U$ during quasi-stabilization, whether it is $w'$ followed by $z'$, or $z'$ followed by $w'$ when we travel along the orientation of $U$. At the level of Heegaard splitting description of the quasi-stabilization map, the difference is when we isotope $U$ to intersect the Heegaard splitting surface at two more points during the quasi-stabilization, whether the extra arc lies in the $\alpha$-handle body or the $\beta$-handle body in the Heegaard splitting. These two moves give the same quasi-stabilization map. See \cite[Section 4.1]{zemke2019link} for more discussion. Therefore, these two decorated link cobordism give the same cobordism map. Isotopies which move the boundary correspond to basepoint moving maps.  Hence, by composing basepoint moving cobordisms on each boundary component if necessary, we can assume the map induced by the pair-of-pants cobordism is the $F_P$ used in the definition of the cabled link Floer homology.
 
Hence $\Psi$ is well-defined. From the definition of $\Psi$ and $\Phi$, it is easy to see that $\Phi$ and $\Psi$ are inverse to each other. Hence $\Phi$ gives a bijection between $\widehat{cHFL}(\mathbb{L},K)$ and  $\mathcal{FL}(W,\mathbb{L})$.
\end{proof}

\begin{figure}[h]
	\[
	{
		\fontsize{10pt}{10pt}\selectfont
		\def\svgwidth{4in}
		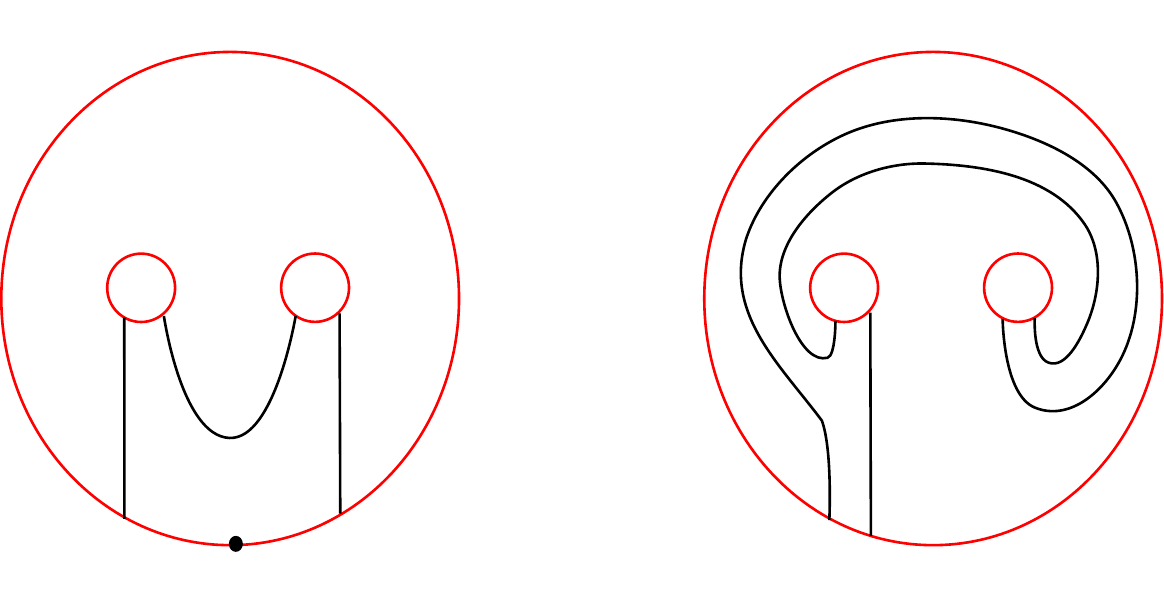
	}
	\]
	\caption{Different decorations on the pair-of-pants}
	\label{fig:different-pair-of-pants}
	\end{figure}

	 \section{Example calculations for $0$-surgery on unknot}
	 \label{sec:calculation}
	 In this section, we give some example calculations the Floer lasagna module, using the description in terms of the cable Floer homology. 
	 
	 Suppose $K$ is the $0$-framed unknot in $S^3$, and let $W$ be the $4$-manifold of attaching $D^2\times D^2$ to $B^4$ along $K$, i.e., $W=S^2\times D^2$.  We will compute $\mathcal{FL}(W,\emptyset)$ and $\mathcal{FL}(W,\mathbb{L})$ when $\mathbb{L} = (L,\w,\z)$ is a link which geometrically intersects a capping disk of $K$ in $S^3$ once. We will see that $\mathcal{FL}(W,\emptyset)$ is very similar to the skein lasagna modules of $(W,\emptyset)$, and  $\mathcal{FL}(W,L)$ turns out to be $0$.
	 
	 \subsection{Calculation of $\mathcal{FL}(W,\emptyset)$}
	 
	 In this case, all we need to compute the cabled link Floer homology $\widehat{cHFL}(\mathbb{L},\emptyset)$ are link Floer homology of unlinks in $S^3$, and elementary cobordism maps between them in $S^3\times I$. As first noticed in \cite{juhasz2018computing}, it is closely related to the reduced Khovanov homology. More explicitly, by marking a component of an unlink $U^n$, one can find a canonical basis of $\widehat{HFL}(S^3,U^n) \cong \mathcal{V}^{\otimes(n-1)}\cong \widetilde{Kh}(U^n)$, where $\mathcal{V} = \langle T,B\rangle$ is a $2$-dimensional $\mathbb{F}_2$-vector space. If we compute the action of the braid group and pair-of-pants maps, they coincide with the corresponding maps induced on reduced Khovanov homology. See \cite[Theorem 7.6]{juhasz2018computing}. What's more, the basepoint moving map acts trivially on $\widehat{HFL}(S^3,U^n)$ for unlinks. Therefore, the computation of the cabled Floer homology is very similar to that of the cabled Khovanov homology, except that we need to pick a marked component. If the braid group action $F_{\tau_{i,i+1}}$ involves the marked component, then the corresponding map on link Floer homology is trivial. Therefore, the computation is exactly the same as in \cite[Section 5]{manolescu2020skein}. The role of quantum grading there is taken place by Maslov grading here. We translate their result in the following proposition:

	 \begin{proposition}
	let $W=S^2\times D^2$. For each $\alpha \in H_2(W,\emptyset;\mathbb{Z}) = \mathbb{Z}$, the Floer lasagna module $\mathcal{FL} (W,\emptyset,\alpha)$ of relative homology class $\alpha$  is given by \[\mathcal{FL} (W,\emptyset,\alpha) = \bigoplus_{k\in \mathbb{N}}\mathbb{F}_2,\]
	where there is a copy of $\mathbb{F}_2$ in each Maslov grading $-k$.
	 \end{proposition}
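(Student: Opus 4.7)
The plan is to invoke Proposition \ref{prop:cabled link homology} to reduce to computing the cabled link Floer homology $\widehat{cHFL}(\emptyset,K)$ for $K$ the $0$-framed unknot, and then to run the argument from \cite[Section 5]{manolescu2020skein} verbatim, translating quantum grading to Maslov grading via Juh\'asz--Marengon's identification of $\widehat{HFL}$ and $\widetilde{Kh}$ on unlinks. Since $K$ is a $0$-framed unknot, for every pair $(\bk^+,\bk^-) = (k^+,k^-) \in \mathbb{N}^2$ the cabled link $K(k^+,k^-)$ is the unlink on $k^+ + k^-$ components. Thus each summand in Definition \ref{def:cabled} is the link Floer homology of an unlink with two basepoints on each component, and the relative homology class of the associated model filling is $k^+ - k^-$ times the generator of $H_2(W;\mathbb{Z}) = \mathbb{Z}$, since each positively (resp.\ negatively) oriented cable bounds a positively (resp.\ negatively) oriented core disk in $W$. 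So for fixed $\alpha \in \mathbb{Z}$ only the pairs with $k^+ - k^- = \alpha$ contribute to $\mathcal{FL}(W,\emptyset,\alpha)$.

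Next I would invoke \cite[Theorem 7.6]{juhasz2018computing} to identify, after a choice of marked component, $\widehat{HFL}(S^3,U^n)$ with $\widetilde{Kh}(U^n) \cong \mathcal{V}^{\otimes(n-1)}$ as a graded vector space, and to match the elementary cobordism maps: the braid action $F_\tau$ and the pair-of-pants map $F_P$ correspond to the braid and merge/split maps on reduced Khovanov homology, while the basepoint moving map $F_{m_{i,j}}$ acts trivially on $\widehat{HFL}$ of unlinks (so the third family of relations in Definition \ref{def:cabled} becomes vacuous). Under this dictionary, the quotient defining $\widehat{cHFL}(\emptyset,K)$ is isomorphic, grading shifts aside, to the cabled Khovanov homology quotient computed in \cite[Section 5]{manolescu2020skein}, and their calculation yields a single copy of $\mathbb{F}_2$ in each nonpositive degree.

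The remaining, and main, task is grading bookkeeping. I would track how the Maslov grading changes under (i) the identification $\widehat{HFL}(S^3,U^n) \cong \mathcal{V}^{\otimes(n-1)}$ with our convention that the top generator of $\widehat{HF}(S^3,\w)$ has Maslov grading $0$, (ii) the explicit shift $[\lvert\bk^+\rvert + \lvert\bk^-\rvert]$ in Definition \ref{def:cabled}, and (iii) the pair-of-pants cobordism, using the grading formula from \cite{zemke2019grading}. The expectation is that these three contributions conspire so that the role played by the quantum grading in \cite[Section 5]{manolescu2020skein} is played here by the Maslov grading, and every element of the resulting tower lives in a nonpositive Maslov grading, with exactly one generator in Maslov grading $-k$ for each $k \in \mathbb{N}$.

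The hard part is precisely this grading translation: the Manolescu--Neithalath answer in the Khovanov setting is distributed across a range of quantum gradings, and one must check that the combined Heegaard Floer shifts line the surviving generator up with Maslov grading $-k$ rather than, say, $-k$ shifted by some function of $\alpha$. Once this is verified in the simplest case $\alpha = 0$, the same shifts work uniformly in $\alpha$ because the difference between two relative classes is realized by attaching standard model cap disks, which contributes a computable and $\alpha$-independent change in Maslov grading. Therefore the conclusion $\mathcal{FL}(W,\emptyset,\alpha) \cong \bigoplus_{k\in\mathbb{N}}\mathbb{F}_2$ with one summand in each Maslov grading $-k$ will follow, completing the proof.
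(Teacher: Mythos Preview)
Your proposal is correct and follows essentially the same approach as the paper: reduce via Proposition~\ref{prop:cabled link homology} to $\widehat{cHFL}(\emptyset,K)$, invoke Juh\'asz--Marengon to identify $\widehat{HFL}$ of unlinks with $\widetilde{Kh}$ together with the elementary cobordism maps, note that the basepoint moving maps act trivially on unlinks, and then import the computation from \cite[Section 5]{manolescu2020skein} with quantum grading replaced by Maslov grading. The paper's argument is in fact terser than yours---it simply asserts the grading translation rather than working through the bookkeeping you flag---and it notes one small additional point you omit, namely that when the braid generator $F_{\tau_{i,i+1}}$ involves the marked component the induced map is trivial, though this does not change the outcome.
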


	 \subsection{Calculation of $\mathcal{FL}(W,\mathbb{L})$ when the geometric intersection number is $1$} 
	 	 In this subsection, we compute $\mathcal{FL}(W,\mathbb{L})$ when $\mathbb{L}=(L,\w,\z)$ intersects a capping disk of the unknot $K$ geometrically once. As we will see, the result turns out to be that $\mathcal{FL}(W,\mathbb{L})$ vanishes in this case, due to some grading constraint on the pair-of-pants map.
	 
	 \begin{proposition}
	 	 Suppose $W$ is obtained from $B^4$ by attaching a $2$-handle along a $0$-framed unknot $K$, and $\mathbb{L}=(L,\w,\z)$ is a multi-based link in $\partial W$ which intersects a capping disk of $K$ geometrically once. Then \[\mathcal{FL}(W,\mathbb{L}) =0.\]
	 	 
	 	 \label{prop:linking 1}
	 \end{proposition}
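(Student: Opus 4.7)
The strategy is to apply Theorem \ref{theo:main} to identify $\mathcal{FL}(W, \mathbb{L})$ with the cabled link Floer homology $\widehat{cHFL}(\mathbb{L}, K)$, and then show that the latter vanishes. I intend to exhibit, for each $v \in \widehat{HFL}(\mathbb{L}, K, \bk^+, \bk^-)$ and each $(\bk^+,\bk^-)\in\mathbb{N}^n$, an equivalence $v \sim 0$ under the relations of Definition \ref{def:cabled}. The central tool is the pair-of-pants relation $F_P(v' \otimes T) \sim 0$: any element arising as a pair-of-pants applied to a $T$-input automatically vanishes in $\widehat{cHFL}$.

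The geometric hypothesis enters as follows. Isotope $L$ inside $\partial W$ (equivalently, view it in $S^3$ away from $K$) so that it meets the capping disk $D$ of $K$ transversely in a single point $p$. Pick a small meridian $\mu \subset D$ of $K$ encircling $p$, so that $\mu$ is an unknot in $S^3$ linked exactly once with the component of $L$ passing through $p$. Using the pair-of-pants cobordism from $\mu$ (viewed as the input unknot $U$) to a new pair of cables $K^+_{\mathrm{new}}, K^-_{\mathrm{new}}$ of $K$, together with the braid-group action and the basepoint-moving maps, the plan is to show that every equivalence class in $\widehat{cHFL}(\mathbb{L},K)$ contains a representative of the form $F_P(w \otimes T)$; the pair-of-pants relation then forces vanishing. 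First I would reduce to a convenient cabling via $v \sim F_P(v \otimes B)$, and then apply the grading formulas of \cite[Theorem 1.4]{zemke2019grading}, tracking the Maslov shift $[|\bk^+|+|\bk^-|]$ built into the definition of $\widehat{cHFL}$, to match the bigrading of a given element with that of an $F_P(\cdot \otimes T)$-image.

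The main obstacle is making the ``forcing a $T$-factor'' step rigorous. Concretely, one must identify a specific decorated link cobordism (built from a pair-of-pants based on $\mu$, a band surgery on $D$ connecting $L$ to $K^+_{\mathrm{new}}$ through $p$, and the reverse cancelation of the newly introduced cables) whose induced map on link Floer homology realizes the desired factorization through $F_P(\cdot \otimes T)$. The geometric intersection being exactly $1$ is essential here: it guarantees the existence of a single band on $D$ that cleanly cancels the new $K^+_{\mathrm{new}}$ with $L$, so that the resulting cobordism returns to the original link type. A larger intersection number would introduce multiple bands whose contributions need not cancel, while intersection zero would leave $\mu$ unlinked from $L$ and the argument would not apply. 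Once the cobordism is constructed, verifying that it has the claimed effect should reduce, via the decomposition into quasi-stabilization and band surgery maps in \cite[Section 4]{zemke2019link}, to the explicit formulas in equation (\ref{eq:stab}) and the vanishing criterion of Lemma \ref{lem:vanishing}.
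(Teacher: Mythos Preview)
Your approach aims to show that every class in $\widehat{cHFL}(\mathbb{L},K)$ is represented by some $F_P(w\otimes T)$, and hence vanishes. This is not the paper's route, and your sketch has a genuine gap. The ``band surgery on $D$ connecting $L$ to $K^+_{\mathrm{new}}$'' is not one of the three relations defining $\widehat{cHFL}$: those act only on the cable components $K(\bk^+,\bk^-)$ while $L$ stays fixed. So even if such a cobordism makes sense at the level of $\mathcal{FL}(W,\mathbb{L})$, translating it back through the isomorphism $\Phi$ into the specific relations of Definition~\ref{def:cabled} is exactly the content of Theorem~\ref{theo:main}, and you would be going in circles. More fundamentally, you concede that the factorization through $F_P(\cdot\otimes T)$ is ``the main obstacle'' and offer no mechanism for it beyond a hoped-for grading match; a grading coincidence cannot by itself force an element into the image of a particular map.

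The paper instead exploits the \emph{other} pair-of-pants relation, $v\sim F_P(v\otimes B)$, and proves that $F_P(v\otimes B)=0$ as an honest element of $\widehat{HFL}$, not merely up to equivalence. The hypothesis that $L$ meets the capping disk once is used to write both $L\cup K(k^+,k^-)\cup U$ and $L\cup K(k^++1,k^-+1)$ as connected sums with a fixed local model; Zemke's connected sum formula (\cite[Proposition~5.2]{zemke2019connected}) then factors $F_P$ as $F'_P\otimes\mathrm{id}$, where $F'_P$ is a pair-of-pants map between two explicit small links $L_1$, $L_2$. An explicit Heegaard diagram for $L_1'$ and $L_2$, combined with the multi-Alexander grading shift of the band map $F^{\z}_B$ from \cite[Lemma~7.2]{zemke2019grading}, forces $F'_P(B)=F^{\z}_B(T^+(B))=0$. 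Hence $v\sim F'_P(B)\otimes v=0$ for every $v$, and $\widehat{cHFL}(\mathbb{L},K)=0$. The idea you are missing is this localization via connected sum, which converts the problem into a single finite computation rather than an unproven surjectivity statement.
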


	  We will use the  following result on the effect of connected sum of link cobordism maps by Zemke:
	 
	 \begin{proposition} \cite[Proposition 5.2]{zemke2019connected}
	 	Suppose that $(W_1,\Sigma_1)$ and $(W_2,\Sigma_2)$ are two link cobordisms, with chosen points $y_1\in \mathcal{A}_1\subset W_1$ and $y_2\in \mathcal{A}_2\subset W_2$, as well as an embedding of $(S^0\times D^4,N_0)$, centered at $\{y_1,y_2\}$, which is orientation preserving and maps type-$\w$ regions to type-$\w$ regions and maps type-$\z$ regions to type-$\z$ regions. Then \[F_{W_1\#W_2,\Sigma_1\#\Sigma_2,\mathfrak{s}_1\#\mathfrak{s}_2} \cong F_{W_1\sqcup W_2,\Sigma_1\sqcup\Sigma_2,\mathfrak{s}_1\sqcup\mathfrak{s}_2},\]
	 	where the connected sum is taken at the $4$-balls centered at $y_1,y_2$.

	 	\label{prop:connected sum}
	 \end{proposition}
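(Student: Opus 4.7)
The strategy is to decompose the connected sum cobordism as a composition of elementary pieces---a $4$-dimensional $1$-handle attachment followed by a band surgery on the surface---and then analyze each piece using the structural properties of the link cobordism maps developed in \cite{zemke2019link}. This reduces the connected sum formula to a local computation in a standard neighborhood of $\{y_1,y_2\}$.

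First I would observe that $(W_1\#W_2,\Sigma_1\#\Sigma_2)$ can be built from $(W_1\sqcup W_2,\Sigma_1\sqcup\Sigma_2)$ in two steps: attach a $4$-dimensional $1$-handle $h^1$ connecting small balls around $y_1$ and $y_2$, and then perform a band surgery along a band $b$ whose core is the cocore arc of $h^1$. The $1$-handle creates a new $S^1\times S^2$ summand in the intermediate $3$-manifold, and the band merges an unknotted $S^2$-fiber component of the surface (traced out by the cocore) into $\Sigma_1\sqcup\Sigma_2$. By the composition law for decorated link cobordism maps, we obtain
\[F_{W_1\#W_2,\Sigma_1\#\Sigma_2,\mathfrak{s}_1\#\mathfrak{s}_2} = F_b\circ F_{h^1}\circ F_{W_1\sqcup W_2,\Sigma_1\sqcup\Sigma_2,\mathfrak{s}_1\sqcup\mathfrak{s}_2}.\]
The claim therefore reduces to showing that $F_b\circ F_{h^1}$ is the identity, once one identifies the relevant tensor factors.

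The next step is to compute $F_{h^1}$ and $F_b$ explicitly in a standard local model. The $1$-handle map acts as $\mathbf{x}\mapsto \mathbf{x}\otimes\Theta^+$, where $\Theta^+$ is the top-graded generator of $\widehat{HF}(S^1\times S^2)$ (this is the standard $1$-handle formula from Heegaard Floer theory). The hypothesis that $y_1,y_2$ lie on dividing arcs $\mathcal{A}_1,\mathcal{A}_2$ and that the embedding of $(S^0\times D^4,N_0)$ preserves the partition of regions into type-$\w$ and type-$\z$ pieces ensures that the band $b$ inherits the standard model decoration: exactly one $\w$-basepoint and one $\z$-basepoint appear on the new unknot component, and the band is attached so that each subregion $\Sigma_\w$ and $\Sigma_\z$ stays connected through the handle. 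In this standard model, $F_b$ is a death-type band map which evaluates the $\Theta^+$ factor trivially, thereby cancelling the $1$-handle contribution and producing the identity on $\widehat{HFL}$ of the disjoint union.

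The main obstacle is the local-to-global step: one must ensure that the local computation in the model neighborhood of $(S^0\times D^4,N_0)$ actually determines the global cobordism map, independent of extensions and choices of Heegaard data. I would handle this by choosing a Heegaard triple diagram for the connected sum that splits as the disjoint union of diagrams for the two pieces together with a standard local model for the $1$-handle-plus-band piece, and then invoking the naturality and stabilization invariance properties proved in \cite{zemke2019link} to transfer the identification to an arbitrary Heegaard diagram. The Spin$^c$ refinement $\mathfrak{s}_1\#\mathfrak{s}_2$ causes no trouble because the $1$-handle cobordism admits a unique Spin$^c$ structure extending $\mathfrak{s}_1\sqcup\mathfrak{s}_2$, and the band surgery is supported away from the handle's cocore $2$-sphere and so preserves the Spin$^c$ decomposition.
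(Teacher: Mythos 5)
This proposition is stated in the paper only as a citation to \cite[Proposition~5.2]{zemke2019connected}; the paper gives no proof of its own, so there is nothing to compare against on that side. Evaluating your argument on its merits, however, I see a genuine gap in the central decomposition. You propose to build $(W_1\#W_2,\Sigma_1\#\Sigma_2)$ from $(W_1\sqcup W_2,\Sigma_1\sqcup\Sigma_2)$ by a single $1$-handle attachment followed by a band, but the connected sum is performed at \emph{interior} points $y_1,y_2$, so it does not change the boundary $\partial(W_1\sqcup W_2)$. Any Morse-theoretic decomposition of $W_1\#W_2$ must therefore both connect and then re-disconnect the level sets: one needs a $1$-handle \emph{and} a cancelling $3$-handle, with the surface tube threaded through the neck in between. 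Your two-step picture omits the $3$-handle entirely, and the displayed identity $F_{W_1\#W_2} = F_b\circ F_{h^1}\circ F_{W_1\sqcup W_2}$ is not even well-typed: $F_b\circ F_{h^1}$ cannot be post-composed onto the full map $F_{W_1\sqcup W_2}$ because the $1$-handle is attached at an interior level, not at $\partial^+(W_1\sqcup W_2)$. One should instead split both cobordisms at a common level through $y_1,y_2$ and reduce to showing that the middle slab of $W_1\#W_2$ induces the same map as the trivial middle slab of $W_1\sqcup W_2$.

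Relatedly, the assertion that ``the $1$-handle creates a new $S^1\times S^2$ summand in the intermediate $3$-manifold'' is incorrect in this setting. A $4$-dimensional $1$-handle whose feet lie in \emph{different} components of the level set produces $Y_{\mathrm{mid},1}\#Y_{\mathrm{mid},2}$, not $(Y_{\mathrm{mid},1}\sqcup Y_{\mathrm{mid},2})\#(S^1\times S^2)$; an $S^1\times S^2$ summand only appears when both feet lie in the \emph{same} component. Consequently the formula $\mathbf{x}\mapsto\mathbf{x}\otimes\Theta^+$ with $\Theta^+\in\widehat{HF}(S^1\times S^2)$ does not describe the map here. The phrase ``evaluates the $\Theta^+$ factor trivially'' is also ambiguous (it reads as ``sends it to $0$,'' which would give $F_b\circ F_{h^1}=0$ rather than the identity), and the references to a ``cocore arc'' and an ``$S^2$-fiber component of the surface traced out by the cocore'' do not match the handle anatomy: the cocore of a $4$-dimensional $1$-handle is a $3$-disk and its belt sphere is an $S^2$, neither of which is a component of the surface. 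Finally, the hypothesis that $y_i\in\mathcal{A}_i$ and that the model embedding respects the $\w$/$\z$ partition is load-bearing --- it controls the decoration on the connect-sum annulus and hence which quasi-(de)stabilization maps appear --- but your argument never uses it beyond a passing remark. As it stands, the proposal identifies a plausible general strategy (localize to the neck and compute elementary pieces) but the decomposition, the local computation, and the use of the decoration hypothesis all need to be reworked before it constitutes a proof.
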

	 \begin{proof} [Proof of Proposition \ref{prop:linking 1}]

	 Suppose $\mathbb{U}=(U,\w_2,\z_2)$ is another unknot with 2 basepoints not linking with $L$, and we want to study the map \[F_{P}:\widehat{HFL}(\mathbb{L}\cup \mathbb{U},K,k^+,k^-)\to \widehat{HFL}(\mathbb{L},K,k^++1,k^-+1),\] induced by the pair-of-pants cobordism. See Section \ref{section:cabled} for the definitions of $\widehat{HFL}(\mathbb{L},K,\mathbf{k}^+,\mathbf{k}^-)$ and $F_P$.
	 
	Because of the condition of the geometric intersection number being $1$, we can write the link 
	\newline $L\cup K(k^+,k^-)\cup U$ as the connected sum $L_1 \#(L\cup K(k^+,k^-))$, and the link $ L\cup K(k^++1,k^-+1)$ as the connected sum $L_2\#(L\cup K(k^+,k^-))$, where $L_1$, $L_2$ are the following $2$ specific links in Figure \ref{fig:trivial-links}.

\begin{figure}[h]
	\[
	{
		\fontsize{10pt}{10pt}\selectfont
		\def\svgwidth{3.5in}
		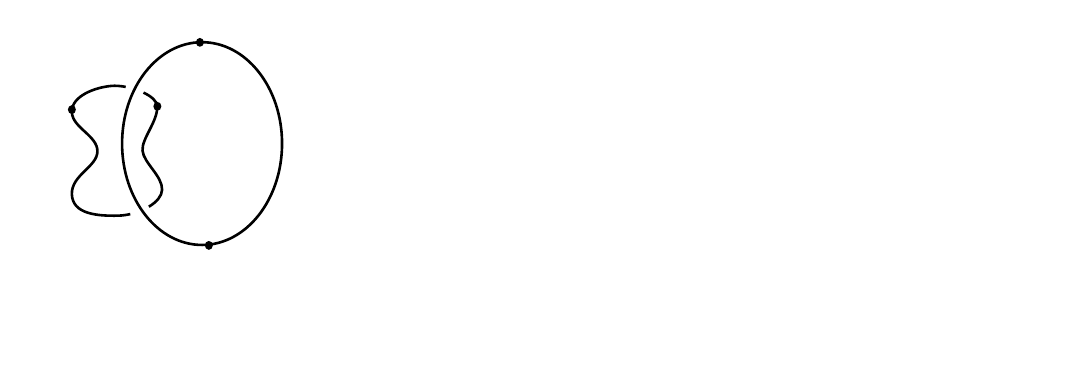
	}
	\]
	\caption{Links $L_1, L'_1$ and $L_2$}
	\label{fig:trivial-links}
	\end{figure}
	
	By the connected sum formula of link Floer homology, we have	
	\begin{equation*}
		 	\begin{split}
		 	\widehat{HFL}(L\cup K(k^+,k^-)\cup U)\cong \widehat{HFL}(L_1)\otimes \widehat{HFL}(L\cup K(k^+,k^-)),\\ \widehat{HFL}(L\cup K(k^++1,k^-+1))\cong \widehat{HFL}(L_2)\otimes \widehat{HFL}(L\cup K(k^+,k^-)).
		 \end{split} 
	\end{equation*}
The pair-of-pants cobordism from $L\cup K(k^+,k^-)\cup U$ to $ L\cup K(k^++1,k^-+1)$ could be written as a connected sum of a pair-of-pants cobordism from $L_1$ to $L_2$, with an identity cobordism from  $ \widehat{HFL}(L\cup K(k^+,k^-))$ to itself.
 Therefore, by Proposition \ref{prop:connected sum}, we have\[F_P \cong F'_P\otimes id, \] where $F'_P$ is the map induced by the pair-of-pants cobordism from $L_1$ to $L_2$.  The pair-of-pants relation in the definition of the cabled link Floer homology (See Definition \ref{def:cabled}) can be written in the form 
 \begin{equation}
 	 v\sim F'_P(B)\otimes v, \quad 0\sim F'_P(T)\otimes v,
 	 \label{eq:vanishing}
 \end{equation}
 for any $v \in \widehat{HFL}(L\cup K(k^+,k^-))$.
 
 It is enough on compute $F'_P$, which could be written as the composition of a quasi-stabilization map $T^+$ with a band surgery map $F^{\z}_B$. See Figure \ref{fig:stab-and-band} for an illustration.
\begin{figure}[h]
	\[
	{
		\fontsize{10pt}{10pt}\selectfont
		\def\svgwidth{3.5in}
		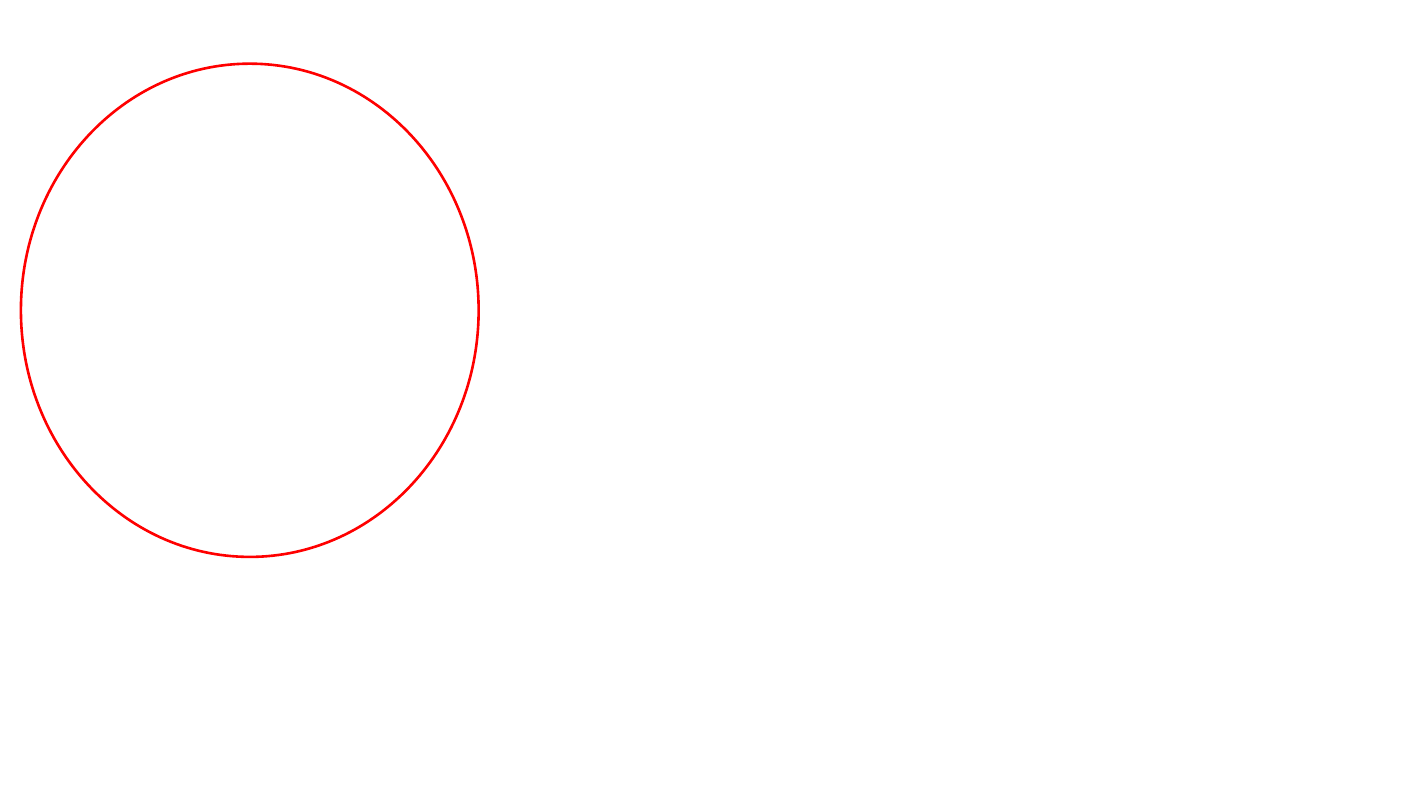
	}
	\]
	\caption{Decorated link cobordisms of $T^+$ and $F^{\z}_B$}
	\label{fig:stab-and-band}
\end{figure}

The quasi-stabilization map $T^+$ is mentioned in Definition \ref{def:stab}. See \cite[Section 6]{zemke2019link} for the definition of the band surgery map $F^{\z}_B$.

For the following discussion, we are going to use the multi-Alexander grading $(A_1,A_2)$ on links, such that $A_1$ is induced by the basepoint $w_1,z_1$, and $A_2$ is induced by the basepoint $w_2,w_3,z_2,z_3$. 
The link $L_1$ is an unlink with $2$ components, which has link Floer homology
\begin{equation*}
	\begin{split}
		 \widehat{HFL}(L_1)\cong \mathbb{F}^2_2&\cong 
		 \langle T,B\rangle, \\ \text{with }\,\, M(T)=0, \,\,M(B)=-1,  \quad &A_i(T)=A_i(B)=0, \quad \text{for }\,\, i=1,2.
	\end{split}
\end{equation*}

The link $L_1'$ is obtained from $L_1$ by adding two basepoints on $U$, so the link Floer homology of $L_1'$ is 
\begin{equation*}
	\begin{split}
		\widehat{HFL}(L'_1)\cong \mathbb{F}^4_2\cong 
		\langle T&,B\rangle \otimes \langle \theta^{\w},\xi^{\w}\rangle,\\ \text{with} \,\, M(\theta^{\w})=0,\, M(\xi^{\w})=-1, \quad A_1(\theta^{\w}) = &A_1(\xi^{\w})=0,  \quad A_2(\theta^{\w})=\dfrac{1}{2}, \,\,A_2(\xi^{\w})=-\dfrac{1}{2}.
	\end{split}
\end{equation*}
The quasi-stabilization map $T^+$ is such that \[T^+(T)  = T \otimes \xi^{\w},\quad T^+(B) = B \otimes \xi^{\w}.\]

For the link $L_2$, it is easy to get its link Floer homology, e.g. noting that it is an alternating link, and using the multi-variable Alexander polynomial \[\Delta(L_2)=x_1^{1/2}-x_1^{-1/2},\] where $x_i$ is the variable associated to link component with basepoints $w_i$ and $z_i$. Then, by \cite[Theorem 1.3]{ozsvath2008holomorphic},
\[ \widehat{HFL}(L_2,\w,\z,h) = \mathbb{F}_2^{\otimes |a_h|}\left[|h|-1\right]\]
where \[(x_1^{1/2}-x_1^{-1/2})\prod_{i=1}^3(x_i^{1/2}-x_i^{-1/2}) = \sum_{h=(h_1,h_2,h_3)\in \mathcal{H} }a_hx_1^{h_1}x_2^{h_2}x_3^{h_3}, \]
$\mathcal{H} = \left\{-1/2,1/2\right\}\times \left\{-1/2,1/2\right\} \times \left\{-1,0,1\right\},$ $|h|=h_1+h_2+h_3,$ and  $\mathbb{F}_2^{\otimes |a_h|}\left[|h|-1\right]$ means $|a_h|$ copies of $\mathbb{F}_2$ with Maslov grading $|h|-1$. In particular, $\widehat{HFL}(L_2,\w,\z)$ is a $16$-dimensional vector space over $\mathbb{F}_2$.

\begin{figure}[h]
	\[
	{
		\fontsize{10pt}{10pt}\selectfont
		\def\svgwidth{4.5in}
		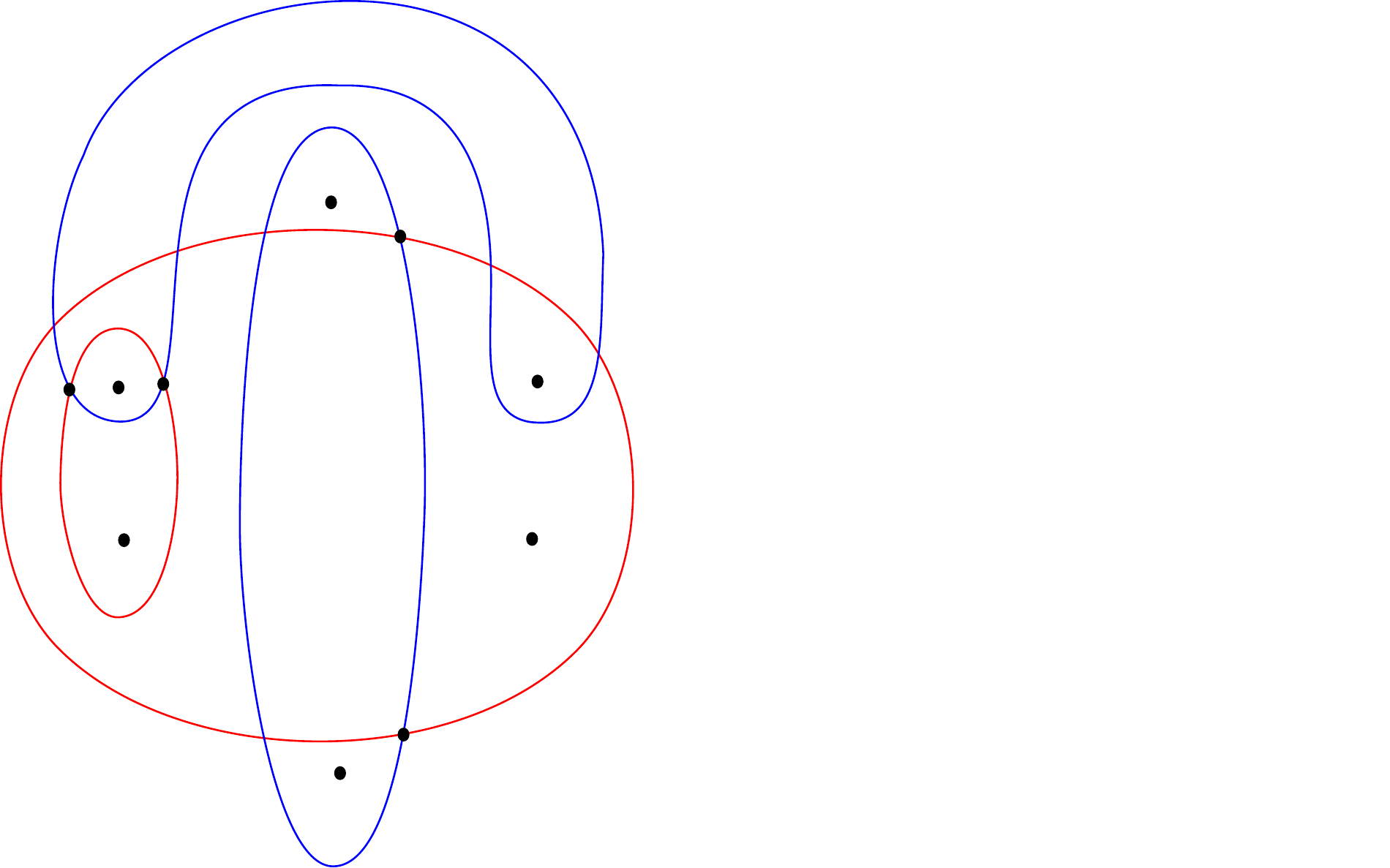
	}
	\]
	\caption{Heegaard diagrams of $L_1'$ and $L_2$}
	\label{fig:link-diagrams}
\end{figure}

We draw the following Heegaard diagrams for $L_1'$ and $L_2$ separately in $(a), (b)$ of Figure \ref{fig:link-diagrams}. Note that there are $4$ and $16$ generators in the corresponding Floer chain complexes, hence there are no non-trivial differential in these two chain complexes, and each generator represents a different homology class in the link Floer homology $\widehat{HFL}(L'_1)$ and $\widehat{HFL}(L_2)$ respectively. Recall $A_1$ is the Alexander grading with respect to the basepoints $w_1,z_1$, and $A_2$ is the collapsed Alexander grading with respect to the basepoints $w_2,w_3,z_2,z_3$. Then we can write \[\widehat{HFL}(L_2)\cong \langle a,b,c,d\rangle \otimes \langle x,y,u,v\rangle\] such that the Alexander gradings are as given in Figure \ref{fig:gradings}.

\begin{figure}[h]
	\[
	{
		\fontsize{10pt}{10pt}\selectfont
		\def\svgwidth{5in}
		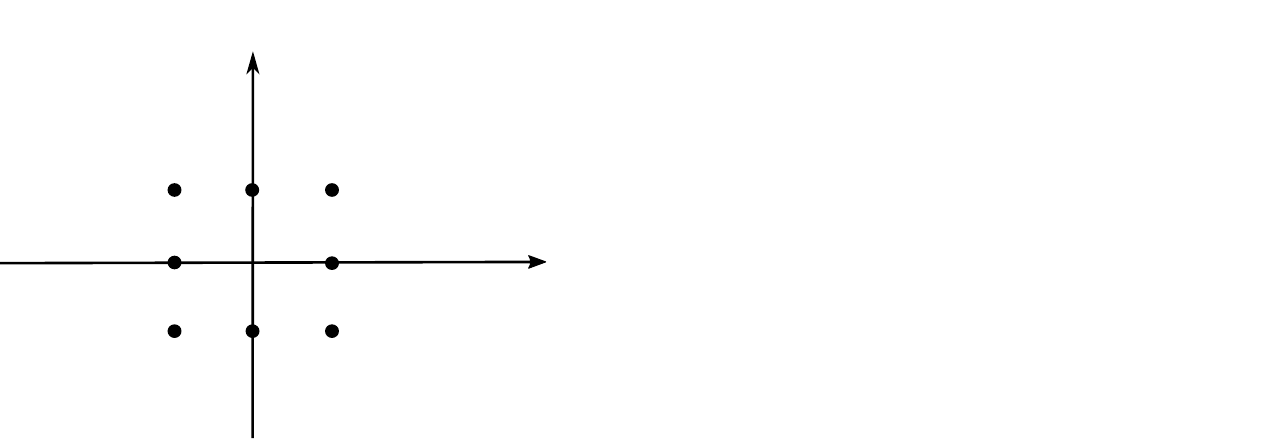
	}
	\]
	\caption{Alexander gradings of $\widehat{HFL}(L_2)$}
	\label{fig:gradings}
\end{figure}

The multi-Alexander grading of a generator is the sum of the corresponding multi-Alexander grading, e.g. \[A_1(du) = A_1(d)+A_1(u) = 1/2+1/2=1,\,\,\,A_2(du) = A_2(d)+A_2(u) = 1/2-1/2=0.\] 

The element in $\widehat{HFL}(L_2)$ with highest Maslov grading is $dy$, with $M(dy)=1,$ which is computed by ignoring the type-$\z$ basepoints and consider it as a multi-pointed Heegaard diagram of $S^3$. The Maslov gradings of other generators could be computed by the relative grading formula of Maslov grading.

The band surgery map $F^{\z}_B$ is defined by counting holomorphic triangles which avoid both type-$\w$ and type-$\z$ basepoints in the triple Heegaard diagram when we draw all the red, blue and green curves in Figure \ref{fig:link-diagrams} together in one diagram. If we ignore the type-$\z$ basepoints in the diagrams, then both diagrams are Heegaard diagrams for $S^3$ with $3$ basepoints. We define \[G^{\z}_B: \widehat{CF}_{a}(S^3,\w) \to \widehat{CF}_{b}(S^3,\w)\] by counting holomorphic triangles which avoid only type-$\w$ basepoints, then $G^{\z}_B$ is a composition of handle-sliding maps, which induces isomorphism on $\widehat{HF}(S^3,\w)$. Here $\widehat{CF}_{a}(S^3,\w)$ and $\widehat{CF}_{b}(S^3,\w)$ refer to the specific Heegaard chain complex generated by diagram $(a)$ and $(b)$ in Figure \ref{fig:link-diagrams}. See \cite[Section 9]{ozsvath2004holomorphic} for details.

Now we focus on the element $G^{\z}_B(B\otimes \xi^{\w})$.

If we denote by $\partial_{\w}: \widehat{CF}(S^3,\w) \to \widehat{CF}(S^3,\w)$ the differential on complex in $(b)$ of Figure \ref{fig:link-diagrams}, then $G^{\z}_B(B\otimes \xi^{\w}) \in \ker(\partial_{\w})$, and the Maslov grading of $G^{\z}_B(B\otimes \xi^{\w})$ should be the same as the Maslov grading of $B\otimes \xi^{\w}$, as we are counting holomorphic triangles of Maslov index $0$ which avoid type-$\w$ basepoints, so 
\[M(G^{\z}_B(B\otimes \xi^{\w})) = M(B\otimes \xi^{\w})=-2.\]

Elements in $\ker(\partial_{\w})$ of the right Maslov grading are $av,bu+bx$ and $cv$, which have the corresponding multi-Alexander gradings as follows:
\[A_1(av) = -1, \,\,A_2(av)=0, \,\, A_1(bu)=A_1(cv)=0, \,\, A_2(bu)=A_2(cv)=-1, \,\, A_1(bx) = -1, A_2(bx)=0\]

Recall what we are actually looking for is $F^{\z}_B(B\otimes {\xi^{\w}})$, which is given by counting holomorphic triangles avoiding both type-$\w$ and type-$\z$ basepoints. Following \cite[Lemma 7.2]{zemke2019grading}, we know the change of the multi-Alexander gradings of the band surgery map $F^{\z}_B$, therefore \[A_1(F^{\z}_B(B\otimes \xi^{\w})) = A_1(B\otimes \xi^{\w})=0, \quad \quad A_2(F^{\z}_B(B\otimes \xi^{\w})) = A_2(B\otimes \xi^{\w})+\dfrac{1}{2}=0.\] 
Now as each term of $G^{\z}_B(B\otimes \xi^{\w})$ has multi-Alexander gradings different from the expected multi-Alexander gradings of $F^{\z}_B(B\otimes {\xi^{\w}})$, it means that every holomorphic triangle that contributes to $G^{\z}_B(B\otimes \xi^{\w})$ actually passes through some type-$\z$ basepoints. In other words, there is no contribution from holomorphic triangles that avoid both type-$\w$ and type-$\z$ basepoints to $G^{\z}_B(B\otimes \xi^{\w})$. That is to say, \[F^{\z}_B(B\otimes {\xi^{\w}})=0.\]
Hence, \[F'_P(B) = (F^{\z}_B\circ T^+)(B) = F^{\z}_B(B\otimes \xi^{\w})=0.\]
Then, by Equation \ref{eq:vanishing}, we get \[v\sim 0\] for any $v\in \widehat{HFL}(\mathbb{L},K,\mathbf{k}^+,\mathbf{k}^-)$, which says the cabled link Floer homology $\widehat{cHFL}(\mathbb{L},K)$ vanishes in this case. By Proposition \ref{prop:cabled link homology}, the Floer lasagna module $\mathcal{FL}(W,\mathbb{L})$ vanishes as well. 
	 	 
\end{proof}

\begin{remark}
	If we naively compute the link Floer homology of the link $S^1\times \{pt\} \subset S^1\times S^2$ by the complex represented in the following Heegaard diagram, 	
\begin{figure}[h]
	\[
	{
		\fontsize{10pt}{10pt}\selectfont
		\def\svgwidth{2.5in}
\begingroup%
  \makeatletter%
  \providecommand\color[2][]{%
    \errmessage{(Inkscape) Color is used for the text in Inkscape, but the package 'color.sty' is not loaded}%
    \renewcommand\color[2][]{}%
  }%
  \providecommand\transparent[1]{%
    \errmessage{(Inkscape) Transparency is used (non-zero) for the text in Inkscape, but the package 'transparent.sty' is not loaded}%
    \renewcommand\transparent[1]{}%
  }%
  \providecommand\rotatebox[2]{#2}%
  \newcommand*\fsize{\dimexpr\f@size pt\relax}%
  \newcommand*\lineheight[1]{\fontsize{\fsize}{#1\fsize}\selectfont}%
  \ifx\svgwidth\undefined%
    \setlength{\unitlength}{277.25583916bp}%
    \ifx\svgscale\undefined%
      \relax%
    \else%
      \setlength{\unitlength}{\unitlength * \real{\svgscale}}%
    \fi%
  \else%
    \setlength{\unitlength}{\svgwidth}%
  \fi%
  \global\let\svgwidth\undefined%
  \global\let\svgscale\undefined%
  \makeatother%
  \begin{picture}(1,0.52251895)%
    \lineheight{1}%
    \setlength\tabcolsep{0pt}%
    \put(0,0){\includegraphics[width=\unitlength,page=1]{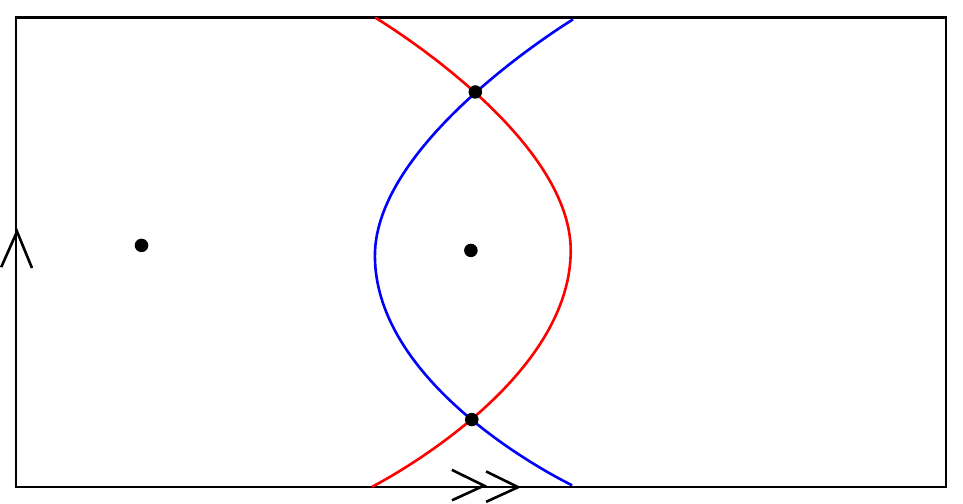}}%
    \put(0.50382447,0.25552528){\makebox(0,0)[lt]{\lineheight{1.25}\smash{\begin{tabular}[t]{l}$w$\end{tabular}}}}%
    \put(0.16882422,0.25552832){\makebox(0,0)[lt]{\lineheight{1.25}\smash{\begin{tabular}[t]{l}$z$\end{tabular}}}}%
    \put(0.51427266,0.41878074){\makebox(0,0)[lt]{\lineheight{1.25}\smash{\begin{tabular}[t]{l}$x$\end{tabular}}}}%
    \put(0.52798619,0.08443365){\makebox(0,0)[lt]{\lineheight{1.25}\smash{\begin{tabular}[t]{l}$y$\end{tabular}}}}%
    \put(0,0){\includegraphics[width=\unitlength,page=2]{s1xs2.pdf}}%
  \end{picture}%
\endgroup%

	}
	\]
	\caption{Heegaard diagram of $S^1\times \{pt\} \subset S^1\times S^2$}
	\label{fig:link-in-s1xs2}
\end{figure}
then there are two generators with a holomorphic bigon connecting them, so we get trivial homology as well. This calculation of the Floer lasagna module $\mathcal{FL}(W,\mathbb{L})=0$ in Proposition \ref{prop:linking 1} indicates that it could work as some generalization of link Floer homology for non-homologous links in general $3$-manifolds.

\end{remark}
 
\begin{remark}
	For $\mathcal{FL}(W,\mathbb{L})$ with $W$ the $4$-manifold obtained by attaching a $2$-handle along the $0$-framed unknot $K$ and general $L$, as all the cobordisms we need to define the cabled link Floer homology $\widehat{cHFL}(\mathbb{L},K)$ could be written as a connected sum of a cobordism near a capping disk of $K$ and the identity cobordism on the rest part, we expect that using the language of bordered knot Floer homology, one can compute the cabled link Floer homology for general $L$. 
\end{remark}
	 \bibliographystyle{amsalpha}
	 \bibliography{biblio}
\end{document}